\newcommand{\disappear}[1]
\newtheorem{theorem}{Theorem}[section]
\newtheorem{proposition}[theorem]{Proposition}
\newtheorem{lemma}[theorem]{Lemma}
\theoremstyle{remark} \newtheorem{remark}[theorem]{Remark}
\newtheorem{definition}[theorem]{Definition}
\newcommand\C{\mathbb{C}}
\newcommand\N{\mathbb{N}}
\newcommand\R{\mathbb{R}}
\newcommand\M{\mathcal{M}}
\newcommand\E{\mathcal{E}}
\newcommand\V{\Phi}
\newcommand\Z{\mathbb{Z}}
\def\d{{d}}
\newcommand{\supp}{\operatorname{supp}}
\newcommand{\vol}{\operatorname{vol}}
\newcommand\Id{\operatorname{Id}}
\newcommand\Op{\operatorname{Op}}
\newcommand\ilg{\operatorname{ilg}}
\newcommand\dom{\operatorname{dom}}
\newcommand\HS{\mathrm{HS}}
\newcommand{\tG}{\widetilde{G}}
\newcommand{\tE}{\widetilde{E}}
\newcommand{\ttE}{\widetilde{E}'}
\newcommand{\tttE}{\widetilde{E}''}
\newcommand{\dkappa}{\, d\kappa}
\renewcommand{\d}{\partial}
\newcommand{\dx}{\, dx}
\newcommand{\dy}{\, dy}
\newcommand{\dz}{\, dz}
\newcommand{\dr}{\, dr}
\newcommand{\dk}{\, dk}
\begin{document}

\title[Riesz transforms on a class of non-doubling manifolds II]{Riesz transforms on a class of non-doubling manifolds II}
\author{Andrew Hassell, Daniel Nix and Adam Sikora}
\address{Andrew Hassell, Mathematical Sciences Institute, Australian National University,
ACT 2601 Australia
}
\email{Andrew.Hassell@anu.edu.au}
\address{Daniel Nix, Mathematical Sciences Institute, Australian National University,
ACT 2601 Australia
}
\email{Daniel.Nix@anu.edu.au}
\address {Adam Sikora, Department of Mathematics and Statistics, Macquarie University, NSW 2109, Australia}
\email{Adam.Sikora@mq.edu.au}

\subjclass{42B20 (primary), 47F05, 58J05 (secondary).}
\keywords{Riesz transform, heat kernel bounds, resolvent estimates, non-doubling spaces, connected sums}

\begin{abstract}
We consider a class of manifolds $\mathcal{M}$ obtained by taking the connected sum of a finite number of $N$-dimensional Riemannian manifolds of the form $(\mathbb{R}^{n_i}, \delta) \times (\mathcal{M}_i, g)$, 
where $\mathcal{M}_i$ is a compact manifold, with the product metric. The case of greatest interest is when the Euclidean dimensions $n_i$ are not all equal. This means that
the ends have different `asymptotic dimension', and implies that the Riemannian manifold $\mathcal{M}$ is not a doubling space.

In the first paper in this series, by the first and third authors, we considered the case where each $n_i$ is least $3$. In the present paper, we assume that one of the $n_i$ is equal to $2$, which is a special and particularly interesting case. Our approach is to construct the low energy resolvent and determine the asymptotics of the resolvent kernel as the energy tends to zero. We show that the resolvent kernel $(\Delta + k^2)^{-1}$ on $\M$ has an expansion in powers of $1/\log (1/k)$ as $k \to 0$, which is significantly different from the case where all $n_i$ are at least 3, in which case the expansion is in powers of $k$.  We express the Riesz transform in terms of the resolvent to show that it is bounded on $L^p(\M)$ for $1 < p \leq 2$, and unbounded for all $p > 2$. 
%
\end{abstract}

\maketitle

\section{Introduction}\label{sone}
In this paper we continue (with an additional author) the study of the resolvent at low energy and the Riesz transform on connected sums of manifolds begun in \cite{HS2}, which it itself an outgrowth of the earlier paper \cite{CCH}. The classical Riesz transform on $\R^n$ is the vector Fourier multiplier with symbol $\xi/|\xi|$. Having a bounded symbol this operator is clearly bounded on $L^2(\R^n)$, and celebrated work of Riesz \cite{Ri} (in one dimension) and Calder\'on and Zygmund \cite{CZ} (in higher dimensions) showed that it is bounded on $L^p(\R^n)$ for $1 < p < \infty$, as well as of weak type $(1,1)$. Strichartz \cite{Strichartz} introduced a geometric generalization of this question. Given a complete Riemannian manifold $(M, g)$, with Laplace operator $\Delta_g$ (with the sign convention that it is positive as an operator), he asked whether the operator $T = \nabla \Delta_g^{-1/2}$, which coincides with the classical Riesz transform when $(M,g)$ is flat Euclidean space, is bounded on $L^p(M, dg)$. (Recall that $M$ complete implies $\Delta$ is essentially self-adjoint on $C_c^\infty(M)$, so $L^2$-spectral theory can be used to define $\Delta^{-1/2}$, provided $0$ is not an eigenvalue of $\Delta$. Also, we use the metric to measure the size of the gradient in the definition of $T$.) There is now a vast literature on Riesz transforms on manifolds, which we do not review here. See \cite{ACDH}, \cite{HS2} and \cite{Nix} for further information and literature review.

The study of the Riesz transform of connected sums of manifolds originates in the seminal paper of Coulhon and Duong \cite{CD}. They showed that the Riesz transform on the connected sum of two copies of $\R^n$, $n >2$, is unbounded on $L^p$ for $p > n$. They also proved boundedness for $1 < p < 2$, leaving open the question of boundedness on $L^p$ in the range $2 < p \leq n$ for $n \geq 3$. This question was settled in the affirmative by Carron, Coulhon and the first author in \cite{CCH} who considered the connected sum of a finite number of copies of $\R^n$. 

In the last section of \cite{CCH}, several open problems were posed concerning the boundedness of the Riesz transform. In particular, the following question was posed: if several complete manifolds have Riesz transform bounded on $L^p$, under which conditions does the connected sum of such manifolds have Riesz transform bounded on $L^p$? This question was addressed by Carron in \cite{Ca2}, for manifolds satisfying a Sobolev inequality with volumes of balls $B(x, r)$ of radius $r$ growing like $r^\nu$ for $\nu$ strictly bigger than 3. 
Meanwhile, Grigoryan and Saloff-Coste \cite{GS99, GS} studied heat kernel estimates on `connected sums of Euclidean spaces of different dimensions', by which they meant connected sums of products $\R^{n_j} \times M_j$ of Euclidean spaces with compact manifolds of fixed total dimension $N$. 

The first and third authors in \cite{HS2} then generalized the result of \cite{CCH} to the class of  `connected sums of Euclidean spaces of different dimensions', provided that each $n_j$ is at least $3$. Let us define this class more  precisely. Let $\mathcal{V}_i$, for $i=1,\cdots, l$ be a family of complete connected non-compact Riemannian manifolds of the same dimension. We say that 
a Riemannian manifold 
$ \mathcal{V}$ is a connected sum of $\mathcal{V}_1,\ldots,\mathcal{V}_l$
and write 
\begin{equation}\label{defM}
\mathcal{V}=\mathcal{V}_1\# \mathcal{V}_2 \#\ldots \#\mathcal{V}_l
\end{equation}
if for some compact subset $K \subset \mathcal{V}$ the exterior 
$\mathcal{V}\setminus K$ is a disjoint union of connected open sets $\tilde{\mathcal{V}_i}$, $i=1,\cdots, l$, such that each $\tilde{\mathcal{V}_i}$ is isometric to $\mathcal{V}_i \setminus K_i$
for some compact sets $K_i \subset \mathcal{V}_i$. We call the subsets $\tilde{\mathcal{V}_i}$ the \emph{ends} of $\mathcal{V}$.

The main result of \cite{HS2} is 
 \begin{theorem}\label{thm:part1}
 Let $\R^{n_1} \times \mathcal{M}_1, \dots, \R^{n_l}\times \mathcal{M}_l$ be a set of $l \geq 2$ manifolds which are products of a Euclidean factor of dimension $n_i$ with a compact Riemannian manifold $\M_i$, with the product metric, for $1 \leq i \leq l$. 
 Suppose that  $\mathcal{M}=(\R^{n_1} \times \mathcal{M}_1) \# 
\ldots \# (\R^{n_l}\times \mathcal{M}_l)$ is a manifold with $l \geq 2$ ends in the sense of \eqref{defM}, with $n_i \geq 3$ for each $i$. 
Then  the Riesz transform   $T= \nabla \Delta^{-1/2}$ defined on $\M$ is bounded on $L^p(\M)$ if and only if 
$1 < p < \min\{n_1,\cdots, {n_l}\}$.
 In addition $T$ is of weak type 
 $(1,1)$.
\end{theorem}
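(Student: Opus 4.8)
The plan is to reduce the Riesz transform to the resolvent via the subordination identity
\begin{equation*}
T \;=\; \nabla \Delta^{-1/2} \;=\; \frac{2}{\pi}\int_0^{\infty} \nabla\,(\Delta+k^2)^{-1}\,dk,
\end{equation*}
and to split the $k$-integral at $k=1$ into a high energy part $\int_1^\infty$ and a low energy part $\int_0^1$. The high energy part depends only on the local geometry of $\M$: since $\M$ is compact away from its ends and each end is a product of a Euclidean space with a fixed compact manifold, $\M$ has bounded geometry --- in particular Ricci curvature bounded below, positive injectivity radius, and Gaussian heat kernel bounds on bounded time scales. The high energy operator is therefore a Calder\'on--Zygmund operator (its kernel is $O(d(x,y)^{1-N})$ near the diagonal, with the cancellation inherited from $\nabla$, and decays exponentially for $d(x,y)\gtrsim 1$), hence bounded on $L^p(\M)$ for all $1<p<\infty$ and of weak type $(1,1)$ by the standard local theory. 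All of the dependence on $\min_i n_i$ is thus concentrated in the low energy part $\int_0^1\nabla(\Delta+k^2)^{-1}\,dk$.

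The core of the argument is a construction of the resolvent $(\Delta+k^2)^{-1}$ for $k\in(0,1]$, with uniform control of its Schwartz kernel as $k\to0$. On the model end $\R^{n_i}\times\M_i$ one separates variables: in the eigenfunction expansion of $\Delta_{\M_i}$, the nonzero modes give massive resolvents $(\Delta_{\R^{n_i}}+\mu+k^2)^{-1}$, $\mu>0$, whose kernels are exponentially localised uniformly in $k\le1$, while the single zero mode gives the Euclidean resolvent $(\Delta_{\R^{n_i}}+k^2)^{-1}$, which carries all of the long-range behaviour and --- crucially, because $n_i\ge3$ --- converges as $k\to0$ to the \emph{finite} Green kernel $c_{n_i}|x-y|^{2-n_i}$. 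One then assembles a parametrix for $(\Delta+k^2)^{-1}$ on $\M$ by patching these model resolvents on the ends to an interior parametrix near the compact core with cutoff functions, and removes the resulting first-order commutator error by a Neumann series. The hypothesis $n_i\ge3$ enters decisively here: it is what makes the error an $O(k^{\varepsilon})$ perturbation of its $k=0$ limit in suitable polynomially weighted $L^2$ spaces, so that, after refining the parametrix to incorporate the leading coupling between the ends, the limiting error operator is invertible and the Neumann series converges for all small $k$. From the construction one reads off that $(\Delta+k^2)^{-1}$ converges as $k\to0$ to $\Delta^{-1}$, whose kernel decays like $|x|^{2-n_i}$ in the variable on end $i$ with leading coefficient a bounded harmonic function of the other variable, and that the transition from these $k=0$ asymptotics to exponential decay occurs at the scale $|x|\sim 1/k$.

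With these asymptotics in hand, one differentiates in $x$, integrates in $k$ over $(0,1)$, and estimates the resulting kernel of the low energy part of $T$ block by block, according to the ends containing $x$ and $y$. On the end of smallest Euclidean dimension $n_{\min}=\min_i n_i$ the decisive contribution is the tail $|x|^{1-n_{\min}}$ coming from $\nabla_x|x|^{2-n_{\min}}$; a Schur test against power weights together with interpolation with the $L^2$ bound then gives boundedness on $L^p(\M)$ for $1<p<n_{\min}$, and a H\"ormander-type estimate for the kernel differences, fed into a Calder\'on--Zygmund decomposition adapted to the non-doubling geometry, gives weak type $(1,1)$. The sharpness --- unboundedness for every $p\ge n_{\min}$ --- is obtained by an explicit counterexample built from the same leading-order term of $\Delta^{-1}$: for each $p\ge n_{\min}$ one exhibits a function in $L^p(\M)$ whose Riesz transform is not in $L^p(\M)$, the obstruction being concentrated on an end realising the minimal dimension (the failure being logarithmic at the endpoint $p=n_{\min}$ and genuine for $p>n_{\min}$).

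The step I expect to be the main obstacle is the uniform-in-$k$ resolvent construction of the second paragraph, and within it the control of the error term as $k\to0$. One must identify the right scale of polynomially weighted $L^2$ spaces so that the glued model resolvents are uniformly bounded and the commutator error is a small invertible perturbation; the delicate point is that the natural weight depends on the Euclidean dimension of the end, so it differs across the ends, which makes the cross-end blocks --- where $x$ and $y$ sit on ends of different dimensions --- the subtle ones, and a naive parametrix produces an error with a nontrivial $k=0$ limit that must be corrected for the coupling between ends. Extracting kernel bounds sharp enough to pin down the exact threshold $n_{\min}$, rather than a range with a gap, is the other place where real care is required.
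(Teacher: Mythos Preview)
This theorem is not proved in the present paper: it is the main result of the predecessor \cite{HS2} (``Part I''), quoted here only as background for Theorem~\ref{thm:main}. So there is no proof in this paper to compare against directly.

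That said, your outline matches the strategy of \cite{HS2} as summarised here and as mirrored in the proof of Theorem~\ref{thm:main}: the resolvent identity and high/low energy split \eqref{F<>}, the high-energy part handled by bounded-geometry Calder\'on--Zygmund arguments (Proposition~\ref{prop:he-Riesz}), and a low-energy parametrix $G(k)=G_1+G_2+G_3+G_4$ assembled from model resolvents on the ends ($G_1$), an interior piece ($G_2$), a ``coupling'' term $G_3$ built from approximate solutions $u_\pm$ furnished by the key lemma (Lemma~3.2 of \cite{CCH} in the $n_i\ge3$ setting, Lemma~\ref{u2lemma} here), and a finite-rank correction $G_4$. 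The block-by-block kernel estimates you describe are exactly how boundedness and the sharp threshold are extracted.

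One point where your sketch departs from the actual mechanism: you speak of the error being an ``$O(k^\varepsilon)$ perturbation'' inverted by a ``Neumann series''. In \cite{HS2} (and here) the error $E(k)$ is \emph{not} small; it is shown to be Hilbert--Schmidt on a suitably weighted $L^2$ space, hence compact, and continuous in $k$ down to $k=0$. The finite-rank term $G_4$ is then chosen, via a density lemma for the range of $\Delta$, so that $\Id+E(0)$ is genuinely invertible (Lemma~\ref{lem:inv} here), and invertibility for small $k$ follows by continuity --- Fredholm theory rather than a Neumann expansion. Your ``coupling between the ends'' correctly anticipates the role of $G_3$, but the further correction $G_4$ is a separate device whose sole purpose is to kill any residual null space of $\Id+E(0)$; it does not arise from asymptotics of the model resolvents.
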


The strategy employed in \cite{HS2} is similar to \cite{CCH}, although implemented slightly differently. In both cases, there is a `key lemma' which describes the behaviour of solutions (or approximate solutions) $u$ to the resolvent equation 
\begin{equation}
(\Delta + k^2) u = v, \quad v \in C_c^\infty(\M),
\label{res-uv}\end{equation}
in the low energy limit $k \to 0$, which are used to construct a parametrix at low energy for the resolvent kernel, in particular capturing some of its long-range behaviour as $k \to 0$. 

The solution to \eqref{res-uv} is obtained by perturbing off a suitable solution to the equation $\Delta u_0 = v$, at zero energy. One thing we can obtain from $u_0$, in the setting of \cite{CCH} and \cite{HS2}, is a harmonic function $\Phi$ that tends to a nonzero constant, say $1$, at one given end of our connected sum manifold $\M$, and to zero at all other ends. This is found by taking a cutoff function $\phi$ that is $1$ near infinity on one end, and $0$ near infinity on all other ends, letting $v = \Delta \phi$, and then defining $\Phi = v - u_{0}$. As $u_0$ tends to zero at infinity along all ends, this shows that $\Phi$ is nontrivial, with the asymptotic behaviour described above.  This function $\Phi$ shows up in the leading order long-range behaviour of the resolvent at low energy, far from the diagonal, and is crucial in understanding the range of $p$ for which the Riesz transform is $L^p$ bounded. 


In the present paper,  we analyze the connected sum of $\R^2 \times \M_-$ and $\R^{d_+} \times \M_+$ where $d_+ > 2$. This is in many ways the most interesting and challenging case. From the point of view of Brownian motion, we are connecting a recurrent space with a transient space. We obtain a space that is transient but `just barely', and this leads to several interesting non-uniformities in the resolvent kernel and heat kernel. In the case of the heat kernel, these were analysed quite precisely in \cite{GS} (although there are no gradient estimates in \cite{GS}, which would be needed to analyze the Riesz transform). Our goal in the present paper is to analyze the resolvent and Riesz transform \emph{without} using heat kernel estimates on $\M$. We do not completely eschew heat kernel estimates, but only use rather elementary estimates on the product manifolds $\R^{d_\pm}\times \M_\pm$; we never consider heat kernels on $\M$ itself. In fact, we would like to reverse the order of logic, and deduce long-time heat kernel asymptotics, as well as gradient bounds, from our resolvent estimates; this is something we hope to do in a future article. 

The main result of the present paper is 
 \begin{theorem}\label{thm:main}
 Let $\R^{n_-} \times \mathcal{M}_-$ and $\R^{n_+}\times \mathcal{M}_+$ be two manifolds which are products of a Euclidean factor of dimension $n_\pm$ with a compact Riemannian manifold $\M_\pm$, with the product metric. Suppose that  $\mathcal{M}=(\R^{n_-} \times \mathcal{M}_-)\# (\R^{n_+}\times \mathcal{M}_+)$ is a connected sum in the sense of \eqref{defM}, with $n_-=2$ and $ n_+ \geq  3$. 
	Then  the Riesz transform   $T$ defined on $\M$ is bounded on $L^p(\M)$ if and only if 
	$1 < p  \le 2 $.
	That is, there exists $C$ such that 
	$$\big\| \, |\nabla \Delta^{-1/2} f| \, \big\|_p\le C \|f\|_p,\ \forall f\in L^p(X,\mu)$$
	if and only if $1 < p \le 2 $.
	In addition   $T$ is of weak type 
	$(1,1)$.
\end{theorem}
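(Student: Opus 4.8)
The plan is to follow the strategy of \cite{CCH} and \cite{HS2}: reduce the $L^p$-boundedness of the Riesz transform to precise estimates on the low-energy resolvent kernel $(\Delta+k^2)^{-1}$ on $\M$, and then handle the high-energy and near-diagonal parts by standard Calder\'on--Zygmund theory. Concretely, I would write $\nabla \Delta^{-1/2} = c\int_0^\infty \nabla (\Delta+k^2)^{-1} \, dk$ and split the integral into a low-energy piece (say $k \le k_0$ for a fixed small $k_0$) and a high-energy piece. The high-energy piece, together with the contribution of the resolvent near the diagonal, is controlled by local elliptic regularity and Gaussian-type heat kernel bounds on the model ends $\R^{n_\pm}\times\M_\pm$ (which hold uniformly because these are well-understood product manifolds); this gives boundedness on $L^p$ for all $1<p<\infty$ and weak type $(1,1)$, and is not where the dimensional restriction enters. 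So the entire content is in the low-energy resolvent.

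The heart of the argument is therefore to construct a parametrix for $(\Delta+k^2)^{-1}$ as $k\to 0$ and read off the size of its kernel and its gradient, far from the diagonal. Following the paradigm described in the introduction, one builds approximate solutions to $(\Delta+k^2)u = v$ by perturbing off a zero-energy solution of $\Delta u_0 = v$; the novelty here is that on the recurrent end $\R^2\times\M_-$ there is no decaying harmonic function, and the relevant ``$\Phi$'' (the function that is $1$ near infinity on the $\R^2$ end and $0$ near infinity on the $\R^{n_+}$ end) must be replaced, at energy $k$, by an object whose leading behaviour on the $\R^2$ end involves $\log(1/k)$. This is exactly the statement in the abstract that the resolvent kernel has an expansion in powers of $1/\log(1/k)$ rather than in powers of $k$. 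I would isolate a ``key lemma'' quantifying this: for $k$ small, the resolvent kernel $(\Delta+k^2)^{-1}(z,z')$ with $z$ in the $\R^2$ end and $z'$ fixed behaves, to leading order, like $(\log(1/k))^{-1}$ times a fixed function of $z$, plus faster-decaying terms, and similarly its gradient in $z$ gains the expected extra decay in $|z|$. The matching/gluing of the two model resolvents across the compact piece $K$, with the correct $\log$-dependent transmission coefficients, is the technical core.

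Given these resolvent and gradient estimates, the $L^p$ bounds follow by a now-standard scheme (as in \cite{CCH,HS2}): one shows that the off-diagonal, low-energy part of $\nabla\Delta^{-1/2}$ has a kernel satisfying $L^p\to L^p$ bounds by direct integration against the known volume growth of the two ends ($r^2$ on one end, $r^{n_+}$ on the other, glued along a compact set), using Schur-type or Young-type estimates. For $1<p\le 2$ the $\log(1/k)$ factors and the gradient decay are exactly strong enough to make the relevant integrals converge; for $p>2$ one exhibits an explicit $f$ (supported near the $\R^2$ end, or exploiting the function that is constant on the $\R^{n_+}$ end) for which $\|\nabla\Delta^{-1/2}f\|_p=\infty$, showing unboundedness --- this is the ``only if'' direction and mirrors the Coulhon--Duong counterexample, with the threshold now sitting exactly at $p=2$ because $n_-=2$. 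The weak type $(1,1)$ claim again comes from the high/near-diagonal part via Calder\'on--Zygmund theory plus the integrability of the low-energy tail.

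The main obstacle I anticipate is the construction and uniform control of the low-energy resolvent parametrix on the non-doubling space $\M$: because one end is recurrent, the zero-energy ``limiting'' operator is not directly invertible in the relevant weighted space, and one must track the $\log(1/k)$ singularity carefully through the gluing, obtaining not just the size of the resolvent kernel but also \emph{gradient} bounds uniform in $k$ (something not available from \cite{GS}). Getting these gradient estimates with the sharp decay --- so that the $p=2$ threshold is attained from both sides --- is where the real work lies; by comparison, once the key lemma is in hand, the passage to $L^p$ bounds and the counterexamples for $p>2$ should be comparatively routine adaptations of \cite{CCH} and \cite{HS2}.
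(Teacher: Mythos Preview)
Your proposal is correct and follows essentially the same approach as the paper: split into high and low energy, build the low-energy resolvent via a parametrix governed by a key lemma giving the $1/\log(1/k)$ expansion, feed the resulting kernel and gradient bounds into Schur-type estimates for $1<p\le 2$, and exhibit a rank-one piece of the kernel (coming from the $\ilg k$ term on the $\R^2$ end) that fails $L^p$-boundedness for $p>2$. The two technical wrinkles beyond your sketch that the paper has to confront are that the parametrix error is only Hilbert--Schmidt on a delicately chosen \emph{weighted} $L^2$ space (the weight must be just heavy enough to exclude the log-growing global harmonic function $\mathcal{U}$ from the dual, else invertibility of $\mathrm{Id}+E(0)$ fails), and that the $\ilg k$ expansion in the key lemma must be carried to at least second order for the error to vanish as $k\to 0$.
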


\begin{remark} The analysis applies equally well to any finite number of ends, as in Theorem~\ref{thm:part1}. However, we have elected to restrict attention to just two ends, to make the proof as transparent as possible.
\end{remark}

We now give an indication of the differences in the analysis of `connected sums of Euclidean spaces of different dimensions' when one Euclidean factor has dimension two. 
The key step, as in \cite{CCH} and \cite{HS2}, is to prove the `key lemma' about solutions to \eqref{res-uv}. 
Our method of proof more closely resembles the analysis in \cite{CCH}. However, in \cite{CCH}, one could quote pre-existing results obtained from Melrose's b-calculus \cite{Melrose}  to show the existence, uniqueness and asymptotics of the function $u_0$ at $k=0$. In the present case, there is not, to our knowledge, any literature on harmonic functions on `connected sums of Euclidean spaces of different dimension'. Therefore, we devote Section~\ref{sec:connectedsums} to a treatment of such functions. It turns out that there are several key differences in the case where one of the Euclidean dimensions is equal to $2$. For example, in this setting, we show that there is no harmonic function $\Phi$ that tends to $1$ at the end with Euclidean dimension $2$ and tending to zero at all other ends. Nevertheless, we take a function $\phi$ as above (that is, equal to $1$ in a neighbourhood of infinity at the end with Euclidean dimension $2$, and supported away from infinity at the other end), and then solve equation \eqref{res-uv} with $v = - \Delta \phi$. We find that $u_{| k=0} = -\phi$, and has an expansion as $k \to 0$ in powers of $\ilg k := (\log 1/k)^{-1}$, which is a `non-classical' type of expansion that only appears when there is a Euclidean factor with dimension two\footnote{However, there is an expansion in $\ilg k$ in the case of the resolvent of a Schr\"odinger operator on a  four-dimensional asymptotically Euclidean manifold when the potential has a zero-resonance; see \cite{GH2}.}. 
Moreover, the coefficient of $\ilg k$ is a harmonic function $\mathcal{U}$ that has \emph{logarithmic growth} at the end with Euclidean dimension $2$ and tends to zero at the other end. This is then the analogue of the $\Phi$ function when the dimension is at least $3$. Moreover, the function $u$ appears in our low-energy parametrix, multiplied by a divergent factor of $\log k$, and thus $\mathcal{U}$ is present at order $O(1)$ in the parametrix, and as we show, in the true resolvent kernel as well. 

We have already discussed some of the direct antecedents of this research. 
We close this introduction by mentioning some other works closely related to this research question. 
Devyver \cite{Dev} and Jiang \cite{Ji1} found additional conditions, beyond Carron's work \cite{Ca1}, implying that the boundedness property of the Riesz transform on $L^p$ is stable under connected sums, or gluing. Grigor'yan and Saloff-Coste \cite{GS2} studied the closely related question of the Faber-Krahn inequality under connected sums. Guillarmou and the first author studied connected sums of asymptotically conic manifolds \cite{GH1}, \cite{GH2}. 
Bui, Duong, Li and Wick \cite{BDLW}  have studied the functional calculus for the Laplacian on `connected sums of Euclidean spaces of different dimensions'. The question of stability of the boundedness of the Riesz transform under compact perturbations has been considered by Coulhon and Dungey \cite{CDungey} and by Jiang and Lin \cite{Ji2}. A more comprehensive discussion of related literature is given in the second author's PhD thesis \cite{Nix}. 

This work constitutes part of the second author's doctoral studies. This research was supported by the Australian Research Council through Discovery Grant DP160100941 (A.H. and A.S.) and by the Australian Government through the award of a Australian Postgraduate Award to D.N.

\section{Connected sums}\label{sec:connectedsums}

%
%

\subsection{Resolvents on product spaces}

Let $\Delta_{\R^{n_\pm} \times \M_\pm}$ be the Laplacian on corresponding product space $\R^{n_\pm} \times \M_\pm$. In this section we derive estimates on the resolvent, and the gradient of the resolvent, from heat kernel estimates. This is in fact the only place where we shall use heat kernel estimates, and we only require elementary estimates on compact manifolds.

We denote  by $e^{-t\Delta_{\R^{n_\pm}\times\M_\pm}}(z,z')$ the heat kernel on the corresponding product space, 
where $z=(x,y)$, $z'=(x',y')$, $x,x' \in \R^{n_\pm}$ and  $y,y' \in \M_\pm$. Recall that the heat kernel on $\R^n$ has the explicit expression
\begin{equation*}
e^{-t\Delta_{\R^{n}}}(x,x') = \frac{1}{(4\pi t)^{n/2}} \exp \left(- \frac{|x-x'|^2}{4t} \right),
\end{equation*}
for $t>0, x,x' \in \R^n$. On compact manifolds, such as $\M_\pm$, of dimension $m$, we have Gaussian upper and lower bounds for small time \cite{Cheeger-Yau, CLY},
\begin{equation}
c t^{-m/2} \exp\Big( \frac{d(y, y')^2}{ct} \Big) \leq    e^{-t\Delta_{\M_\pm}}(y,y') \leq  C t^{-m/2} \exp\Big( \frac{d(y, y')^2}{Ct} \Big) , \quad 0 < t \leq 1, 
\end{equation}
while for large time the maximum principle gives us 
\begin{equation}
0 < c  \leq     e^{-t\Delta_{\M_\pm}}(y,y')  \leq  C < \infty , \quad  t \geq 1. 
\end{equation}

We also have the gradient estimates of the heat kernel
\begin{equation*}
|\nabla e^{-t\Delta_{\R^{n}}}(x,x')| = C_c\frac{1}{(4\pi t)^{(n+1)/2}} \exp \left(- \frac{|x-x'|^2}{4t} \right),
\end{equation*}
and 
\begin{equation*}
|\nabla e^{-t\Delta_{\M_\pm}}(y,y')| \leq 
\begin{cases}
Ct^{-(m+1)/2} \exp \left(- \frac{d(y,y')^2}{ct} \right), \qquad t \leq 1, \\
C e^{-\lambda_1 t} , \qquad t \geq 1,
\end{cases}
\end{equation*}
where $\lambda_1 > 0$ is the first positive eigenvalue of the Laplacian. 
Combining all of the above results gives heat kernel estimates and gradient estimates on the product space $\R^{n_\pm} \times \M_\pm$. Using the fact that the heat kernel on a product is a tensor product, 
\begin{equation*}
e^{-t\Delta_{\R^{n_\pm}\times \M_\pm}}((x,y), (x',y')) = e^{-t\Delta_{\R^{n_\pm}}}(x,x') e^{-t\Delta_{\M_\pm}}(y,y'),
\end{equation*}
we obtain (with $z = (x,y)$ a coordinate on the product) the upper bound, 
\begin{equation*}
e^{-t\Delta_{\R^{n_\pm}\times M_\pm}}(z,z') \leq C(t^{-n_\pm/2}+ t^{-N/2})\exp \left(- \frac{|z-z'|^2}{Ct} \right)
\end{equation*}
the lower bound 
\begin{equation*}
 c(t^{-n_\pm/2}+ t^{-N/2})\exp \left(- \frac{|z-z'|^2}{ct} \right) \leq e^{-t\Delta_{\R^{n_\pm}\times M_\pm}}(z,z')
\end{equation*}
and the gradient estimate 
\begin{equation*}
|\nabla e^{-t\Delta_{\R^{n_\pm}\times M_\pm}}(z,z')| \leq C(t^{-(n_\pm+1)/2}+ t^{-(N+1)/2})\exp \left(- \frac{|z-z'|^2}{ct} \right),
\end{equation*}
where $N=\dim \mathcal{M}_- + n_- = \dim \mathcal{M}_+ + n_+$.

We denote the resolvent kernel on the $\pm$ end by $(\Delta_{\R^{n_\pm} \times \M_\pm} + k^2)^{-1}(z,z')$. We use the identity 
\begin{equation} \label{heattoresolvent}
(\Delta_{\R^{n_\pm} \times \M_\pm} + k^2)^{-1} = \int_0^\infty e^{-tk^2} e^{-t\Delta_{\R^{n_\pm} \times \M_\pm}} \ dt
\end{equation}
to obtain bounds on the resolvent kernel from heat kernel bounds.

Consider the ordinary differential equation
\begin{equation*}
f'' + \frac{a-1}{r}f' - f = 0,
\end{equation*}
for $a \geq 1$. There is a positive solution decaying as $r \to \infty$  given by
\begin{equation*}
L_a(r) = r^{1-a/2}K_{|a/2-1|}(r), 
\end{equation*}
for $a \geq 1$. Here $K_{|a/2-1|}$ is the modified Bessel function of the second kind. The asymptotics of $L_a$, for $a>2$, are given by
\begin{equation*}
L_a(r) \sim 
\begin{cases}
r^{2-a}, \qquad &r \leq 1 \\
r^{(1-a)/2}e^{-r}, \qquad &r \geq 1.
\end{cases}
\end{equation*}
The asymptotics of $L_2(r) = K_0(r)$, i.e. when $a=2$, are given by
\begin{equation*}
L_2(r) \sim 
\begin{cases}
-\log r + \log 2 - \gamma \qquad & r < 1 \\
\frac{\sqrt{\pi}}{\sqrt{2}} \frac{e^{-r}}{\sqrt{r}} \qquad & r \geq 1,
\end{cases}
\end{equation*}
where $\gamma$ is the Euler-Mascheroni constant. Furthermore, for $a \geq 3$,
\begin{equation}
C_{a,c}r^{2-a}e^{-r} \leq L_a(r) \leq C_a' r^{2-a} e^{-cr}, \qquad 0 < c < 1,
\end{equation}
while for $a=2$ we have 
\begin{equation}
C_{\tilde c}(1 + |\log r|)e^{-\tilde c r} \leq L_a(r) \leq C (1 + |\log r|) e^{-r}, \qquad \tilde c > 1.  
\end{equation}

Furthermore, the following identity holds for $a \geq 1$,
\begin{equation} \label{a>=1identity}
\int_0^\infty e^{-tk^2} \, t^{-a/2} \, \exp \left(- \frac{r^2}{4t} \right) \ dt = C_a k^{a-2} L_a(kr).
\end{equation}
for some constant $C_a > 0$.
Applying $(\ref{heattoresolvent})$ and $(\ref{a>=1identity})$ yields the resolvent kernel estimates and gradient estimates on each product space. For the $+$ case  we have an upper bound 
\begin{equation}
(\Delta_{\R^{n_+} \times \M_+} + k^2)^{-1}(z,z') \leq C \left(d(z,z')^{2-N} + d(z,z')^{2-n_+} \right)\exp \left(-ckd(z,z') \right),
\label{resolvent-3}\end{equation}
a lower bound 
\begin{equation}
(\Delta_{\R^{n_+} \times \M_+} + k^2)^{-1}(z,z') \geq c \left(d(z,z')^{2-N} + d(z,z')^{2-n_+} \right)\exp \left(-Ckd(z,z') \right),
\end{equation}
and a gradient estimate
\begin{equation}
\left| \nabla(\Delta_{\R^{n_+} \times \M_+} + k^2)^{-1}(z,z') \right| \leq C \left(d(z,z')^{1-N} + d(z,z')^{1-n_+} \right)\exp \left(-ckd(z,z') \right).
\label{gradresolvent-3}\end{equation}
For the two-dimensional case $\R^2 \times \M_-$, and for $k > 0$,  we have an upper bound 
\begin{equation}
(\Delta_{\R^{2} \times \M_-} + k^2)^{-1}(z,z') \leq C \left[ d(z,z')^{2-N} + 1 + | \log\left(k d(z,z')\right) |  \right]\exp \left(-ckd(z,z') \right),
\label{resolvent-2}\end{equation}
a lower bound
\begin{equation}\label{res-lowerbound-2}
(\Delta_{\R^{2} \times \M_-} + k^2)^{-1}(z,z') \geq c \left[ d(z,z')^{2-N} + 1 + | \log\left(k d(z,z')\right) |  \right]\exp \left(-Ckd(z,z') \right),
\end{equation}
and a gradient bound
\begin{equation} \label{gradresolvent-2}
\left| \nabla(\Delta_{\R^{2} \times \M_-} + k^2)^{-1}(z,z') \right| \leq C \left(d(z,z')^{1-N} + d(z,z')^{-1}  \right)\exp \left(-ckd(z,z') \right).
\end{equation}

\begin{remark}
One phenomenon that occurs in dimension $2$ is that the Euclidean Green function $-2\pi \log |x-x'|$ is not the pointwise limit, as $k \to 0$, of the kernel of $(\Delta_{\R^2} + k^2)^{-1} (x,x')$. In fact, the latter kernel has a $-\log k$ divergence as $k \to 0$, which must be subtracted in order to obtain a pointwise limit. This explains the somewhat paradoxical fact that the Green function on $\R^2$ changes sign, despite the fact that the kernel of $(\Delta + k^2)^{-1}$ on any complete manifold is positive for all $k > 0$. 
\end{remark}


\subsection{Harmonic functions on the ends of $\M$}\label{subsec:harmonic}
We next consider functions on $\M$ that are harmonic outside a compact set. This is preparation for defining an elliptic boundary problem on the compact part $K$ of $\M$, which will allow us to solve the inhomogeneous Laplace equation globally on $\M$. We shall treat the two ends $E_\pm$ separately, as the end $E_-$ with two-dimensional Euclidean factor behaves differently to $E_+$. 

Let $K \subset \M$ be a compact set such that the boundary of $K$ is contained in the two ends (that is, where the metric is of product type), and such that the boundary $\partial K$ is a disjoint union of the form $\partial_- K = \{ x \in \R^2 \mid |x| = R \} \times \M_-$ in the $E_-$-end, and $\partial_+ K = \{ x \in \R^{n_+} \mid |x| = R \}  \times \M_+$ in the $E_+$-end. Given a smooth function $f$ on $\partial_\pm K$, we wish to extend it to a harmonic function on $E_\pm$ with `good' behaviour at infinity. 

Below, we denote by $r$ a function on $\M$ that is equal to the Euclidean radial coordinate $|x|$ on each end $E_\pm$, and is $\geq 1$ everywhere on $\M$. The radial derivative $\partial_r$ is taken to be the usual radial derivative in Euclidean polar coordinates on $E_\pm$, and is not defined on the interior of $K$. 

\begin{lemma}\label{lem:harmonic-} On $\partial_- K$, there is a unique harmonic extension operator 
$$
\E_- : C^\infty(\partial_- K) \ni f \mapsto u = \E_- f \in C^\infty(E_-)
$$
such that $u$ tends to a constant at infinity and has a complete asymptotic expansion in nonpositive powers of $r$ as $r \to \infty$. Moreover, the radial derivative $\partial_r u$ is $O(r^{-2})$ at infinity and has a complete asymptotic expansion obtained by differentiating the expansion for $u$ term-by-term. 
\end{lemma}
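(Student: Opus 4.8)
The plan is to construct $\E_- f$ by separation of variables on $E_-$, which we identify with $\{|x| \geq R\} \times \M_-$ using the product coordinates $(x,y)$, $x \in \R^2$, $y \in \M_-$. Let $0 = \mu_0 < \mu_1 \leq \mu_2 \leq \cdots$ be the eigenvalues of $\Delta_{\M_-}$, with a real orthonormal basis of eigenfunctions $\psi_j$ (so $\psi_0$ is constant), and let $e^{in\theta}$, $n \in \Z$, be the angular Fourier modes on the $\R^2$ factor in polar coordinates $(r,\theta)$. Expanding the data as $f = \sum_{j,n} f_{j,n}\, e^{in\theta}\psi_j$ on $\partial_- K$, one looks for $u = \sum_{j,n} g_{j,n}(r)\, e^{in\theta}\psi_j$; since $\Delta = -(\partial_r^2 + r^{-1}\partial_r + r^{-2}\partial_\theta^2) + \Delta_{\M_-}$ on $E_-$, harmonicity is equivalent to each $g_{j,n}$ solving the ODE $g'' + r^{-1} g' - (n^2 r^{-2} + \mu_j)g = 0$ on $r > R$, together with $g_{j,n}(R) = f_{j,n}$. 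For $(j,n) = (0,0)$ the bounded solutions are the constants; for $j=0$, $n \neq 0$ they are the multiples of $r^{-|n|}$; and for $\mu_j > 0$ the substitution $\rho = \sqrt{\mu_j}\,r$ yields the modified Bessel equation of order $|n|$, whose bounded (and in fact exponentially decaying) solutions are the multiples of $K_{|n|}(\sqrt{\mu_j}\,r)$, using the Bessel facts recalled in the previous subsection. This dictates the definition
\[
\E_- f \;=\; f_{0,0}\psi_0 \;+\; \sum_{n \neq 0} f_{0,n}\Big(\frac{r}{R}\Big)^{-|n|} e^{in\theta}\psi_0 \;+\; \sum_{\mu_j > 0}\ \sum_{n \in \Z} f_{j,n}\,\frac{K_{|n|}(\sqrt{\mu_j}\,r)}{K_{|n|}(\sqrt{\mu_j}\,R)}\, e^{in\theta}\psi_j .
\]

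The next step is convergence and regularity. Since $f \in C^\infty(\partial_- K)$ and $\partial_- K = S^1 \times \M_-$ is a closed manifold, the coefficients $f_{j,n}$ decay faster than any power of $1 + |n| + \sqrt{\mu_j}$. As $K_\nu$ is positive and decreasing, each radial factor in the display is $\leq 1$ on $\{r \geq R\}$; more generally, applying $\partial_r$, $\partial_\theta$ and derivatives along $\M_-$ costs only polynomial factors in $(|n|, \sqrt{\mu_j})$ — for the radial derivatives one uses the mode ODE and the Bessel recurrences, and for the $\M_-$ derivatives the Sobolev bound that $\|\psi_j\|_\infty$ grows polynomially in $\mu_j$ — all of which are absorbed by the rapid decay of $f_{j,n}$. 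Hence the series converges in $C^\infty(\overline{E_-})$; the partial sums are harmonic, so the limit $\E_- f$ is harmonic and smooth, and its trace on $\{r = R\}$ is $f$. Thus $\E_-$ is a harmonic extension operator.

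To read off the asymptotics, I would show that the third sum in the display is $O(e^{-\sqrt{\mu_1}(r-R)})$, hence $O(r^{-\infty})$: from the integral representation $K_\nu(s) = \int_0^\infty e^{-s\cosh t}\cosh(\nu t)\,dt$ and $\cosh t \geq 1$ one obtains the uniform bound $K_\nu(\sqrt{\mu_j}\,r) \leq e^{-\sqrt{\mu_j}(r-R)} K_\nu(\sqrt{\mu_j}\,R)$ for $r \geq R$, and the resulting sum over $(n,j)$ converges by the decay of $f_{j,n}$; the same applies after any number of radial derivatives. The remaining terms, $f_{0,0}\psi_0 + \sum_{n\ne0} f_{0,n} R^{|n|}\, r^{-|n|} e^{in\theta}\psi_0$, tend to the constant $f_{0,0}\psi_0$ and, grouped by powers of $r$, form a convergent asymptotic expansion in $r^0, r^{-1}, r^{-2}, \dots$ whose coefficients do not depend on the $\M_-$ variable (the tail $\sum_{|n| > m}$ is $O(r^{-(m+1)})$ on $\{r \geq R\}$). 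Differentiating term by term gives $\partial_r(\E_- f) = -\sum_{n\ne0} |n|\, f_{0,n} R^{|n|}\, r^{-|n|-1} e^{in\theta}\psi_0 + O(r^{-\infty})$, which is $O(r^{-2})$ (from the $|n| = 1$ terms) and is exactly the term-by-term radial derivative of the expansion for $\E_- f$. Finally, for uniqueness: if $u_1, u_2$ are harmonic extensions of $f$ each tending to a constant, then $w = u_1 - u_2$ is bounded, harmonic on $E_-$ and vanishes on $\partial_- K$; expanding $w$ in the same basis, each coefficient $w_{j,n}(r)$ is a bounded solution of the corresponding mode ODE, hence a multiple of the distinguished bounded solution above, and since none of these vanishes at $r = R$ while $w_{j,n}(R) = 0$, we get $w_{j,n} \equiv 0$ for every $(j,n)$, i.e. $w \equiv 0$.

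I expect the main obstacle to be the uniform control of the double series over the angular index $n$ and the $\M_-$-eigenvalue index $j$ at once — especially ensuring that the block with $\mu_j > 0$ decays faster than every power of $r$ in a way that survives summation, so that it contributes nothing to the asymptotic expansion. This is precisely what the uniform estimate $K_\nu(\sqrt{\mu_j}\,r)/K_\nu(\sqrt{\mu_j}\,R) \leq e^{-\sqrt{\mu_j}(r-R)}$, together with the polynomial sup-norm bounds on the $\psi_j$ and the rapid decay of the Fourier coefficients of the smooth datum $f$, is tailored to provide; once it is in hand, the remaining points (smoothness up to the boundary, and the term-by-term differentiability of the expansion) are routine.
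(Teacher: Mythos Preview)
Your proposal is correct and follows essentially the same approach as the paper: separation of variables on $E_- \cong \{r \geq R\} \times S^1 \times \M_-$, identification of the bounded mode solutions as $r^{-|n|}$ (for $\mu_j = 0$) and $K_{|n|}(\sqrt{\mu_j}\,r)$ (for $\mu_j > 0$), the uniform exponential bound $K_\nu(\sqrt{\mu_j}\,r)/K_\nu(\sqrt{\mu_j}\,R) \leq e^{-\sqrt{\mu_j}(r-R)}$ from the integral representation to kill the $\mu_j > 0$ block, and the same uniqueness argument via mode-by-mode projection. The only place the paper is more explicit is in controlling $\partial_r$ of the Bessel block: rather than appealing to recurrences, it derives the inequality $|rK_m'(r)| \leq (m+r)K_m(r)$ from an integral representation, which feeds directly back into the exponential estimate already established; your allusion to ``the mode ODE and the Bessel recurrences'' is adequate but you may want to make this step concrete.
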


\begin{proof}
We use polar coordinates $(r, \theta)$ on $\R^2 \setminus \{ |x| \leq R \}$, and expand in the eigenfunctions $\psi_l^-$ on $\M_-$, $l \geq 0$. Let $\mu_l^2$ be the eigenvalue of the Laplacian $\Delta_{\M_-}$ corresponding to $\psi_l^-$. Notice that $\mu_0 = 0$ and $\mu_l > 0$ for all $l > 0$. 
We expand $f$ simultaneously in a Fourier series in $\theta$ and in the $\psi_l^-$: 
\begin{equation*}
f = \sum_{m \in \Z,l\in \N} c_{ml} e^{im\theta} \psi_l^-(y)
\end{equation*}
where, thanks to the smoothness of $f$, we have $c_{ml} = O(\langle m \rangle^{-M} \langle l \rangle^{-M})$, for all $M \in \N$. 

Separating variables we seek a bounded solution $u$ to $\Delta u = 0$ in $E_-$ with boundary value $f$, of the form  
\begin{equation}
u(r, \theta, y) = \sum_{m \in \Z, l\in \N} b_{ml}(r) e^{im\theta} \psi_l^-(y).
\label{u-exp-2}\end{equation}
To solve this equation we require that 
\begin{equation}
\left(-\frac{\d^2}{\d r^2} - \frac{1}{r} \frac{\d}{\d r} + \frac{m^2}{r^2} + \mu_l^2 \right) b_{ml}(r) = 0.
\label{b-eqn}\end{equation}
It is convenient to split into the cases $l = 0$ and $l > 0$. 

$\underline{l\neq0:}$ Let $b_{ml}(r) = \tilde{b}_{ml}(\mu_l r)$ for $l \neq 0$. The above equation becomes
\begin{equation*}
\left(-\frac{\d^2}{\d r^2} - \frac{1}{r} \frac{\d}{\d r} + \frac{m^2}{r^2} + 1 \right) \tilde{b}_{ml}(r) = 0.
\end{equation*}
Observe that this is the modified Bessel equation of order $m$ which has solutions that are linear combinations of $I_{|m|}(r)$ and $K_{|m|}(r)$. In order to satisfy the boundedness condition on $u$ we must choose the exponentially decaying solution, so we take $\tilde{b}_{ml}(r) = cK_{|m|}(r)$, that is, 
\begin{equation}
b_{ml}(r) = c K_{|m|}(\mu_l r).
\label{bmlneq0}\end{equation}
We now treat the $l=0$ case. \\
$\underline{l=0:}$
\begin{equation*}
\left(-\frac{\d^2}{\d r^2} - \frac{1}{r} \frac{\d}{\d r} + \frac{m^2}{r^2}  \right) b_{m0}(r) = 0.
\end{equation*}
Solving we obtain solutions $r^{\pm m} \text{ for } m \neq 0$ and $1 \text{ or }  \log r  \text{ for } m=0$. In order to satisfy the boundedness condition, we take
\begin{equation}
\tilde{b}_{m0}(r) = \begin{cases}
c r^{- |m|} \qquad \text{for } m \neq 0 \\
c  \qquad \qquad \text{for } m=0.
\end{cases}
\label{bml=0}\end{equation}
Thus, a harmonic extension of $f$ to the end $E_-$ is given formally by the sum 
\begin{equation}\label{u-exp-exp}
u(r, \theta, y) = \sum_{m \in \Z} c_{m0} e^{im\theta} \psi^-_0(y) \big( \frac{r}{R} \big)^{-|m|} + 
\sum_{m \in \Z,l \geq 1} c_{ml}  e^{im\theta} \psi_l^-(y) \frac{K_{|m|}(\mu_l r)}{K_{|m|}(\mu_l R)}
\end{equation}
which matches with $f$ at $r=R$. Notice that in the first term on the RHS, $\psi^-_0(y)$ is a constant, equal to $(\vol(\mathcal{M}_-))^{-1/2}$. 

We next consider the convergence of this sum. Note that $\sup_y |\psi^-_l(y)|$ has at most polynomial growth in $l$, thanks to the estimate $\| \psi^-_l \|_{L^\infty} \leq C \mu_l^{(\dim \mathcal{M}_- - 1)/2}$ and Weyl asymptotics for the $\mu_l$. Taking into account the rapid decrease of $c_{ml}$ and the fact that 
the functions 
$$
\big( \frac{r}{R} \big)^{-|m|} \quad \text{ and } \quad \frac{K_{|m|}(\mu_l r)}{K_{|m|}(\mu_l R)}
$$
are uniformly bounded by $1$, for $r \geq R$,  this shows that the series converges uniformly. As a uniform limit of harmonic functions, the limit is also harmonic. This shows that the expression \eqref{u-exp-exp} defines a harmonic function with boundary value $f$. 

We next show that 
$u$ has an asymptotic expansion in nonpositive powers of $r$ given by the $l=0$ terms. More precisely, we have 
\begin{equation}\label{u-asymptexp}
u = \vol(\mathcal{M}_-)^{-1/2} \sum_{|m| \leq M} c_{m0} e^{im\theta}  \big( \frac{r}{R} \big)^{-|m|} + O(r^{-M-1}), \quad r \to \infty, 
\end{equation}
for every integer $M \geq 0$. 

To demonstrate  asymptotic expansion \eqref{u-asymptexp} we need to establish two estimates. The first is to show that the sum of the tail of the $l=0$ series, that is, 
$$
\vol(\mathcal{M}_-)^{-1/2} \sum_{|m| > M} c_{m0} e^{im\theta}  \big( \frac{r}{R} \big)^{-|m|},
$$
is $O(r^{-M-1})$ as $r \to \infty$. The second is to show that the sum of the $l \geq 1$ terms decays faster than any polynomial. In fact, we shall show that the sum of the $l \geq 1$ terms is exponentially decaying. 

The first estimate is straightforward. We remove a factor of $(r/R)^{-M-1}$ from each term, obtaining 
$$
\big( \frac{r}{R} \big)^{-M-1} S(r, \theta, y), \quad S(r, \theta, y) =  \vol(\mathcal{M}_-)^{-1/2} \sum_{|m| \geq M+1} c_{m0} e^{im\theta} \big( \frac{r}{R} \big)^{-|m| +M+1},
$$
and the argument is the same as before: a uniform bound on $S$ is given by 
$$
\vol(\mathcal{M}_-)^{-1/2} \sum_{|m| \geq M+1} |c_{m0}| ,
$$
which is finite due to the rapid decrease  of $c_{m0}$. The same reasoning shows that if we take the partial $r$-derivative of each term, the resulting series also converges uniformly.

For the second estimate, we use the following standard integral representation of modified Bessel functions $K_\nu$ for $\nu \geq 0$:
\begin{equation}\label{Knu-int}
K_\nu(y) = \int_0^\infty e^{-y \cosh t} \cosh (\nu t) \, dt, \quad \nu > 0, \quad y > 0. 
\end{equation}
Now suppose that $0 < x < y$. Then, using the elementary inequality $e^{-y \cosh t} \leq e^{-x \cosh t} e^{x-y}$, and substituting into \eqref{Knu-int}, we find that 
\begin{equation}\label{exp-ineq}
K_\nu(y) \leq e^{x-y} K_\nu(x), \quad \text{ provided } 0 < x < y. 
\end{equation}
We use this to estimate the size of the $l \geq 1$ sum in \eqref{u-exp-exp}. Let $c$ be a number smaller than $\mu_1$. Using \eqref{exp-ineq}, we have 
$$
\frac{K_{|m|}(\mu_l r)}{K_{|m|}(\mu_l R)} \leq e^{-\mu_l (r-R)}, \quad r \geq R.
$$
Using this, 
we can pull out a factor $e^{-c(r-R)}$ in front of the sum, and obtain 
\begin{equation}\label{unif-exp-decay}
\Big| \sum_{m \in \Z,l \geq 1} c_{ml}  e^{im\theta} \psi_l^-(y) \frac{K_{|m|}(\mu_l r)}{K_{|m|}(\mu_l R)} \Big| \leq e^{-c(r-R)} \sum_{m \in \Z,l \geq 1} |c_{ml}|e^{im\theta} \| \psi_l^- \|_{L^\infty} ,
\end{equation}
and again the rapid decrease of the $c_{ml}$ and the at most polynomial increase of $\| \psi_l^- \|_{L^\infty}$ show that the RHS is bounded by a constant times $e^{-c(r-R)}$. This completes the proof of \eqref{u-asymptexp}.

Next, we show that $\partial_r u$ has the corresponding expansion obtained by term-by-term differentiation.  We use the well known fact that if a series $\sum_j v_j$ converges uniformly to $u$, and the series of radial derivatives $\sum_j \partial_r(v_j)$ converges uniformly to $w$, then $\partial_r u$ exists and is equal to $w$. Because of this, it only remains to show that after taking the radial derivative term-by-term, the tail of this new series satisfies similar estimates as above. This is trivial for the $l=0$ terms. For the $l \geq 1$ terms, we need a uniform estimate on the derivatives of modified Bessel functions. To do this, we use another standard identity: 
\begin{equation*}
K_m(r) = \frac{\sqrt{\pi}r^m}{2^m \Gamma(m+\frac12)} \int_1^\infty e^{-rt} (t^2-1)^{m - \frac12} \ dt.
\end{equation*}
By changing variable to $r(t-1)$, we can write this in the form
\begin{equation*}
K_m(r) = \frac{\sqrt{\pi}r^{-1/2} e^{-r}}{2^m \Gamma(m+\frac12)} \int_0^\infty e^{-t} (\frac{t^2}{r} + 2t)^{m - \frac12} \ dt.
\end{equation*}
Differentiating in $r$, we find that 
\begin{equation}\begin{gathered}
rK_m'(r) = - \frac1{2} K_m(r) - r K_m(r) + \frac{\sqrt{\pi}r^{-1/2} e^{-r}}{2^m \Gamma(m+\frac12)} (m-\frac1{2})\int_0^\infty e^{-t} (-\frac{t^2}{r})(\frac{t^2}{r} + 2t)^{m - \frac32} \ dt \\
\geq - \frac1{2} K_m(r) - r K_m(r) - \frac{\sqrt{\pi}r^{-1/2} e^{-r}}{2^m \Gamma(m+\frac12)} (m-\frac1{2})\int_0^\infty e^{-t} (\frac{t^2}{r} + 2t)^{m - \frac12} \ dt \\
= - m K_m(r) - r K_m(r). 
\end{gathered}\end{equation}
Hence, noting that $K_m$ is a decreasing function (which is evident from the first line above), thus $K_m'$ is negative, we have 
\begin{equation}\label{K'-est}
\Big| r K_m'(r) \Big| \leq m K_m(r) + r K_m(r).
\end{equation}
Now applying this to the series of radial derivatives for $l \geq 1$, we are in effect reduced to the previous estimate apart from an addition factor of $|m|$ or $\mu_l$, which is of no significance due to the rapid decrease of the $c_{ml}$. 

We now show uniqueness of $u$. Let $\Pi_m$ be the spectral projector onto the $m^2$-eigenspace of $\Delta_S$ and let $\pi_l$ be the spectral projector onto the $\mu_l^2$-eigenspace of $\Delta_{\M_-}$. If $u$ is harmonic, for $r \geq R$, then since $\Delta$ commutes with rotations on $\R^2$ and $\Delta_{\M_-}$ on $\M_-$, it commutes with spectral projections of these. So, if $u$ is harmonic, then $u_{ml}:= \Pi_m \pi_l u = b_{ml}(r)e^{im\theta} \psi_l^-(y)$ is harmonic. Boundedness of $u$ implies that $|b_{ml}(r)| \leq C$. Then $b_{ml}(r)$ solves the ordinary differential equation \eqref{b-eqn}. In order to satisfy the boundedness condition, we must take the solutions \eqref{bmlneq0} or \eqref{bml=0} according to $\l \neq 0$ or $l=0$ respectively, with the constant determined by the matching condition with $f$, as discussed previously. This shows uniqueness of $u$.

This concludes the proof of the Lemma. 
\end{proof}


A similar operator is defined on $\partial_+ K$, with the difference that here, we ask that the solution decays to zero at infinity. 

\begin{lemma}\label{lem:harmonic+} On $\partial_+ K$, there is a unique  extension operator 
$$
\E_+ : C^\infty(\partial_+ K) \ni f \mapsto u = \E_+ f \in C^\infty(E_+)
$$
such that $u$ is harmonic on $E_+$ and  is $O(r^{2-n_+})$ at infinity. Moreover $u$  has a complete asymptotic expansion in negative powers of $r$ as $r \to \infty$. In addition,  the radial derivative $\partial_r u$ is $O(r^{1-n_+})$ at infinity and has a complete asymptotic expansion obtained by differentiating the expansion for $u$ term-by-term. 
\end{lemma}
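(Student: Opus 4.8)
The plan is to transcribe the proof of Lemma~\ref{lem:harmonic-} almost verbatim, replacing the Fourier expansion on the circle $S^1\subset\R^2$ by an expansion in spherical harmonics on $S^{n_+-1}\subset\R^{n_+}$, and tracking the one genuine structural change: at the $E_+$ end every mode decays, so there is no bounded nonconstant harmonic extension. Concretely, write $\R^{n_+}\setminus\{|x|\le R\}$ in polar coordinates $(r,\omega)$, $\omega\in S^{n_+-1}$; let $Y_{j\alpha}$ be an orthonormal basis of spherical harmonics of degree $j$ on $S^{n_+-1}$, so $-\Delta_{S^{n_+-1}}Y_{j\alpha}=j(j+n_+-2)Y_{j\alpha}$ and the multiplicity in $\alpha$ is polynomial in $j$; and let $\psi_l^+$ be the eigenfunctions of $\Delta_{\M_+}$ with eigenvalues $\mu_l^2$, $\mu_0=0<\mu_1\le\mu_2\le\cdots$. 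Expand $f=\sum_{j,\alpha,l}c_{j\alpha l}\,Y_{j\alpha}(\omega)\psi_l^+(y)$ with $c_{j\alpha l}$ rapidly decreasing in $j$ and $l$ (smoothness of $f$), and seek $u=\sum_{j,\alpha,l}b_{j\alpha l}(r)\,Y_{j\alpha}(\omega)\psi_l^+(y)$. Separation of variables reduces $\Delta u=0$ to the radial ODEs $\big(-\partial_r^2-\tfrac{n_+-1}{r}\partial_r+\tfrac{j(j+n_+-2)}{r^2}+\mu_l^2\big)b_{j\alpha l}=0$.

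Next I would solve these. For $l=0$ this is an Euler equation with solutions $r^{j}$ and $r^{2-n_+-j}$; the requirement $u\to0$ at infinity forces $b_{j\alpha 0}(r)=c\,r^{2-n_+-j}$ for \emph{every} $j\ge0$ --- this is the point where the analysis genuinely differs from Lemma~\ref{lem:harmonic-}: the $j=0$ mode is now $r^{2-n_+}$ rather than a nonzero constant, so no bounded harmonic extension exists. For $l\ge1$, rescaling $r\mapsto\mu_l r$ reduces the equation to the modified Bessel equation of order $\nu_j:=j+\tfrac{n_+-2}{2}$, whose subdominant solution is $r^{(2-n_+)/2}K_{\nu_j}(\mu_l r)$ (its small-$r$ behaviour $\sim r^{2-n_+-j}$ matches the $l=0$ case, while the companion solution $r^{(2-n_+)/2}I_{\nu_j}(\mu_l r)$ grows exponentially and is discarded). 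Normalising each radial factor to equal $1$ at $r=R$ so that $u$ matches $f$, we obtain
\begin{equation*}
u(r,\omega,y)=\sum_{j,\alpha}c_{j\alpha 0}\,Y_{j\alpha}(\omega)\,\psi_0^+(y)\Big(\tfrac{r}{R}\Big)^{2-n_+-j}+\sum_{j,\alpha,\,l\ge1}c_{j\alpha l}\,Y_{j\alpha}(\omega)\,\psi_l^+(y)\,\Big(\tfrac{r}{R}\Big)^{(2-n_+)/2}\frac{K_{\nu_j}(\mu_l r)}{K_{\nu_j}(\mu_l R)}.
\end{equation*}

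Convergence, harmonicity and the asymptotic expansion then proceed exactly as in Lemma~\ref{lem:harmonic-}. For $r\ge R$ the factors $(r/R)^{2-n_+-j}$, $(r/R)^{(2-n_+)/2}$ and $K_{\nu_j}(\mu_l r)/K_{\nu_j}(\mu_l R)$ are all bounded by $1$ (the last since $K_\nu$ is decreasing), so the rapid decay of $c_{j\alpha l}$ dominates the polynomial growth in $j,l$ of the multiplicities and of $\|Y_{j\alpha}\|_{L^\infty}$, $\|\psi_l^+\|_{L^\infty}$; hence the series converges uniformly and $u$ is harmonic with boundary value $f$. The $l=0$ terms supply the asymptotic expansion in the powers $r^{2-n_+},r^{1-n_+},r^{-n_+},\dots$, all negative because $n_+\ge3$; factoring $(r/R)^{2-n_+-M-1}$ out of the tail $\{j>M\}$, as in the previous proof, shows that tail is $O(r^{2-n_+-M-1})$. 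The $l\ge1$ terms are handled by inequality \eqref{exp-ineq}, which gives $K_{\nu_j}(\mu_l r)/K_{\nu_j}(\mu_l R)\le e^{-\mu_l(r-R)}\le e^{-c(r-R)}$ for any $c<\mu_1$, so their sum is $O(e^{-c(r-R)})$ and contributes nothing to the polynomial expansion. The leading term is $j=l=0$, a constant times $r^{2-n_+}$, so $u=O(r^{2-n_+})$. For $\partial_r u$ one justifies term-by-term differentiation by the same fact used in Lemma~\ref{lem:harmonic-} (uniform convergence of $\sum v_j$ and of $\sum\partial_r v_j$ implies $\partial_r u=\sum\partial_r v_j$): differentiating the $l=0$ terms gains a factor $r^{-1}$, so $\partial_r u=O(r^{1-n_+})$, and for $l\ge1$ the estimate \eqref{K'-est} generalises to the (possibly half-integer) order $\nu_j$ by the same integral-representation argument, the extra factor $\nu_j$ being harmless against the rapid decay of $c_{j\alpha l}$.

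Uniqueness is then proved as in Lemma~\ref{lem:harmonic-}: projecting a harmonic $u$ with $u=O(r^{2-n_+})$ onto the $(j,\alpha,l)$-mode via the spectral projectors for rotations on $\R^{n_+}$ and for $\Delta_{\M_+}$, each $b_{j\alpha l}$ solves the radial ODE and decays, which forces the decaying solution --- for $l=0$ this excludes $r^{j}$ for all $j\ge0$ (in particular the constant), and for $l\ge1$ it excludes the exponentially growing $I_{\nu_j}$ --- with the constant pinned down by matching $f$ at $r=R$. I expect no conceptual obstacle: the argument is a direct transcription of Lemma~\ref{lem:harmonic-} with $S^1$ replaced by $S^{n_+-1}$, and the only points needing any care are bookkeeping the polynomial-in-$j$ multiplicities of spherical harmonics and checking that \eqref{exp-ineq} and \eqref{K'-est} still apply at non-integer Bessel order. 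The substantive difference from the $E_-$ case is structural rather than technical: since $n_+\ge3$, the lowest mode already decays like $r^{2-n_+}$, so $\E_+ f$ vanishes at infinity and no bounded harmonic extension exists.
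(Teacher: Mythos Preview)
Your proposal is correct and follows exactly the approach of the paper: expand in spherical harmonics on $S^{n_+-1}$ and eigenfunctions on $\M_+$, solve the radial ODE (Euler for $l=0$, modified Bessel for $l\ge1$), pick out the decaying branch, and carry over the convergence, tail, and derivative estimates from Lemma~\ref{lem:harmonic-} verbatim. If anything your write-up is more detailed than the paper's own proof, which is a brief sketch deferring to the $E_-$ argument; the only cosmetic discrepancy is that the paper calls the $\M_+$ eigenvalues $\nu_l^2$ rather than $\mu_l^2$.
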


\begin{proof} The proof proceeds in exactly the same way as the previous proof, so we only indicate the points of difference. 

We use polar coordinates $(r, \theta)$ on $\R^{n_+} \setminus \{ |x| \leq R \}$, where $\theta \in S^{n_+ - 1}$, and expand in the eigenfunctions $\psi_l^+$ on $\M_+$, $l \geq 0$. Let $\nu_l^2$ be the eigenvalue of the Laplacian $\Delta_{\M_+}$ corresponding to $\psi_l^+$. Notice that $\nu_0 = 0$ and $\nu_l > 0$ for all $l > 0$. 
We expand $f$ simultaneously in the $\psi_l^+$ on $\M_+$ and in spherical harmonics $\phi_{j,m}(\theta)$ in $\theta$, where $\Delta_{S^{n_+-1}} \phi_{j,m} = m (n_+-2+m) \phi_{j,m}   $; the dimensional of this eigenspace grows polynomially with $m$. Thus we have  
\begin{equation*}
f = \sum_{j,m,l} c_{jml} \phi_{j,m}(\theta)  \psi_l^+(y)
\end{equation*}
where, thanks to the smoothness of $f$, we have $c_{jml} = O(\langle m \rangle^{-N} \langle l \rangle^{-N})$, for all $N$. 

Separating variables, as above, we seek a bounded solution $u$ to $\Delta u = 0$ in $E_+$ with boundary value $f$, in the form  
\begin{equation*}
u = \sum_{m,l} b_{jml}(r) \phi_{j,m}(\theta)  \psi_l^+(y).
\end{equation*}
The case $l > 0$ is very similar to the case above. We find that $b_{jml}(r)$ is $r^{-(n_+ - 2)/2}$ times a modified Bessel function of order $(n_+-2)/2 + m$. 
The case $l=0$ is slightly different. 
To solve this equation we require that 
\begin{equation*}
\left(-\frac{\d^2}{\d r^2} - \frac{n_+ - 1}{r} \frac{\d}{\d r} + \frac{m(n_+-2+m)}{r^2} b_{jm0}(r) \right) = 0.
\end{equation*}
This has solutions $r^{m}$ and $r^{-(n_+-2) - m}$. To comply with the condition that the solution should vanish in the limit $r \to \infty$ we must choose a multiple of $r^{-(n_+-2) - m}$. In this way, we obtain a unique solution $u$ as above.  

The proof of the asymptotic expansion is made in exactly the same way as for $E_-$. 
\end{proof}

We next use these extension operators to define exterior Dirichlet-to-Neumann operators. For this purpose, let $\nu = -\partial_r$ be the unit vector field along $\partial K$ pointing in the direction of the interior of $K$. 

\begin{definition}\label{def:DtN} We define, for each end $E_\pm$, an exterior Dirichlet-to-Neumann operator $\Lambda_{\textup{ext},\pm} : C^\infty(\partial_\pm K) \to 
C^\infty(\partial_\pm K)$ as follows:
$$
\Lambda_{\textup{ext},\pm} f = \partial_\nu \E_\pm f \big|_K.
$$
That is, $\Lambda_{\textup{ext},\pm} f$ is the normal derivative of the harmonic extension $u$ of $f$ constructed in Lemmas~\ref{lem:harmonic-} and \ref{lem:harmonic+}. 
\end{definition}

The following result is well-known, but as we did not locate a proof valid in the exterior setting, we provide a quick proof. 

\begin{proposition}
The exterior Dirichlet-to-Neumann operators $\Lambda_{\textup{ext},\pm}$ are pseudodifferential operators or order $1$, with symbol $\sigma(\Lambda_{\textup{ext},-})(z, \zeta) = |\zeta|$, where $|\zeta|$ denotes the length of $\zeta$ in the product metric on $E_-$, restricted to $\partial_- K$. 
\end{proposition}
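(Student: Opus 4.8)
The plan is to prove that each exterior Dirichlet-to-Neumann operator $\Lambda_{\textup{ext},\pm}$ is a classical pseudodifferential operator of order $1$ on the closed manifold $\partial_\pm K$ whose principal symbol is $|\zeta|$, the length in the product metric. The basic principle is the standard one: the Dirichlet-to-Neumann operator associated to an elliptic boundary value problem is an elliptic $\Psi$DO of order $1$, and its symbol is read off from the symbol of the operator that maps Dirichlet data to the normal derivative of the \emph{decaying} solution of the relevant ODE in the normal variable after freezing coefficients. The only subtlety here, and the reason a proof is given, is that we work on the exterior (non-compact) region rather than a collar of a compact region, but near $\partial_\pm K$ the metric is exactly of product type, so the exterior nature only affects which solution of the normal ODE we select; it does not affect the local microlocal structure.

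\medskip

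Here is how I would carry out the argument for $E_-$; the case $E_+$ is identical after replacing $\R^2$ by $\R^{n_+}$. First I would note that, in the product region, $\Delta = -\partial_r^2 - \frac{1}{r}\partial_r + r^{-2}\Delta_{S^1} + \Delta_{\M_-}$, and $\partial_- K = S^1_R \times \M_-$ with its induced product metric, which carries the joint eigenbasis $e^{im\theta}\psi_l^-(y)$; these are exactly the eigenfunctions of $\Delta_{\partial_- K}$ with eigenvalues $m^2/R^2 + \mu_l^2 =: \lambda_{ml}^2$. From the explicit solution formula \eqref{u-exp-exp} in the proof of Lemma~\ref{lem:harmonic-}, the extension operator is diagonalized in this basis: $\E_-(e^{im\theta}\psi_l^-)$ has radial profile $(r/R)^{-|m|}$ when $l=0$ and $K_{|m|}(\mu_l r)/K_{|m|}(\mu_l R)$ when $l\neq 0$. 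Hence $\Lambda_{\textup{ext},-}$ is diagonal too, with eigenvalue
$$
\sigma_{ml} := -\partial_r\Big|_{r=R}\Big[ \text{the radial profile}\Big] =
\begin{cases} |m|/R, & l=0, \\[2pt] -\mu_l K_{|m|}'(\mu_l R)/K_{|m|}(\mu_l R), & l \neq 0. \end{cases}
$$
So $\Lambda_{\textup{ext},-}$ is a function of the two commuting operators $\sqrt{-\Delta_{S^1}}/R$ (i.e. $|m|/R$, but note $R$ is incorporated into the arc-length metric on $\partial_- K$, so this is really $\sqrt{-\Delta_{\partial_- K, S^1\text{-part}}}$) and $\sqrt{\Delta_{\M_-}} = \mu_l$. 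The key asymptotic input is the classical expansion of the logarithmic derivative of $K_\nu$: uniformly for $\nu, s$ with $\nu^2 + s^2 \to \infty$ one has $-\partial_s \log K_\nu(s) = \sqrt{1 + \nu^2/s^2}\,\big(1 + O((\nu^2+s^2)^{-1/2})\big)$ more precisely the uniform (Olver) asymptotics give $-K_\nu'(\sqrt{\nu^2+s^2}\cdot\text{stuff})$... concretely, setting $z = \mu_l R$ and $\nu = |m|$, we get $\sigma_{ml} = \frac{1}{R}\sqrt{m^2 + \mu_l^2 R^2}\,(1 + O((m^2+\mu_l^2R^2)^{-1/2})) = \sqrt{\lambda_{ml}^2}\,(1 + O(\langle\lambda_{ml}\rangle^{-1}))$, and the full uniform asymptotic expansion of $K_\nu'/K_\nu$ yields a complete expansion of $\sigma_{ml}$ in $\lambda_{ml}$ with terms of order $\lambda_{ml}^{1}, \lambda_{ml}^0, \lambda_{ml}^{-1}, \dots$, each a symbol in the appropriate sense. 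I would then invoke the characterization that an operator on a closed manifold which acts as a function $h(\lambda_{ml})$ of the joint spectral parameter, with $h$ admitting such a classical symbol expansion $h(\lambda) \sim \lambda + a_0 + a_1\lambda^{-1} + \dots$, is a classical $\Psi$DO of order $1$ with principal symbol $|\zeta|$ (the length of $\zeta$ in the induced product metric, since $\lambda_{ml}^2 = |\zeta|^2$ is exactly the principal symbol of $\Delta_{\partial_- K}$). This is essentially the statement that $f(\Delta_{\partial_- K}^{1/2})$ is a $\Psi$DO for $f$ a classical symbol of the relevant order; one can cite Seeley's theorem on complex powers, or Taylor's or Shubin's treatment of functions of an elliptic operator, together with the observation that the product structure of both $\Delta_{\partial_- K}$ and the interior operator means freezing coefficients at a point $z_0 \in \partial_- K$ reproduces precisely the constant-coefficient model whose normal-decaying solution has exponent $-|\zeta|$, giving principal symbol $|\zeta|$.

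\medskip

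The main obstacle, and the part requiring the most care, is the uniform asymptotics of $K_\nu'(z)/K_\nu(z)$ as $\nu$ and $z$ range over the relevant region — in particular one needs the estimate to be uniform as $\nu \to \infty$ with $z$ bounded below (the regime $m \to \infty$, $l$ fixed) \emph{and} as $z \to \infty$ with $\nu$ bounded (the regime $l \to \infty$, $m$ fixed), and jointly. The clean way to handle this is to use the integral representation \eqref{Knu-int} together with the bound \eqref{exp-ineq} and the derivative estimate \eqref{K'-est} already established in the proof of Lemma~\ref{lem:harmonic-}, which give $|\sigma_{ml}| \le C(\mu_l + |m|/R + 1) \le C'\langle\lambda_{ml}\rangle$; for the sharp principal symbol one supplements these with Olver's uniform asymptotic expansion of $K_\nu$ (DLMF 10.41), valid uniformly in $\nu$ with error controlled by $\nu^{-1}$ and allowing $z/\nu$ to range over $(0,\infty)$. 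I would present this as a lemma on the symbol $h$, prove the order-$1$ symbol bounds $|\partial_\lambda^j(h(\lambda) - |\lambda|)| \le C_j \langle\lambda\rangle^{-j}$ (since $h$ only depends on the combination entering $\lambda$, difference quotients in the discrete spectral parameter suffice), and then conclude. The rest — translating "function of the Laplacian with classical symbol" into "classical $\Psi$DO of order $1$ with principal symbol $|\zeta|$" — is standard and can be cited rather than reproved.
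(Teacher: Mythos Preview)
Your approach is genuinely different from the paper's, and it is essentially workable, but let me compare and flag one imprecision.

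The paper argues as follows: compactify $E_-$ (e.g.\ stereographically) to a compact manifold $\tilde E_-$ with the same metric near $\partial_-K$; the ordinary interior Dirichlet-to-Neumann map $\tilde\Lambda$ on $\tilde E_-$ is then a classical $\Psi^1$ with principal symbol $|\zeta|$ by the standard compact-manifold theory (Taylor, Chapter~7). For any $f$ on $\partial_-K$, the two harmonic extensions (to $E_-$ and to $\tilde E_-$) differ by a function harmonic near $\partial_-K$ with vanishing Dirichlet data, hence smooth up to the boundary; it follows that $\Lambda_{\textup{ext},-}-\tilde\Lambda$ has smooth Schwartz kernel, i.e.\ is smoothing. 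Thus $\Lambda_{\textup{ext},-}\in\Psi^1$ with the same principal symbol. This is short and avoids any Bessel asymptotics beyond what was already used in Lemma~\ref{lem:harmonic-}.

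Your route---compute the eigenvalues $\sigma_{ml}$ explicitly, extract uniform (Olver) asymptotics of $-K_\nu'/K_\nu$, and then invoke a functional-calculus theorem to conclude $\Psi^1$---is more hands-on and yields the same conclusion, but it costs more: you need the uniform large-parameter Bessel expansion and a joint-functional-calculus statement. On the latter point your write-up slips: you say the eigenvalue is a function $h(\lambda_{ml})$ of the single parameter $\lambda_{ml}=\sqrt{m^2/R^2+\mu_l^2}$, and then cite Seeley-type results on $f(\sqrt{\Delta_{\partial_-K}})$. That is not correct as stated---$\sigma_{m0}=|m|/R$ versus $\sigma_{0l}=-\mu_l K_0'(\mu_l R)/K_0(\mu_l R)$ shows $\sigma_{ml}$ is \emph{not} determined by $\lambda_{ml}$ alone. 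You need a symbol in the \emph{pair} $(|m|/R,\mu_l)$, i.e.\ a joint function of the two commuting first-order operators $\sqrt{\Delta_{S^1_R}}$ and $\sqrt{\Delta_{\M_-}}$, and then a product-manifold functional-calculus result to conclude it is a $\Psi$DO. This is true (and the principal symbol is still $|\zeta|$ since the leading homogeneous term of $\sigma$ is $\sqrt{m^2/R^2+\mu_l^2}$), but it is a genuinely stronger citation than ``$f(\sqrt\Delta)$ is a $\Psi$DO''. If you tighten that step your argument goes through; alternatively, the paper's compactification trick sidesteps the whole issue.
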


\begin{proof}
We just prove this for the $E_-$ end as the other case is analogous. 

In terms of the series expansion \eqref{u-exp-exp} for $u = \mathcal{E}_{\pm} f$, we have 
\begin{equation}
\Lambda_{\textup{ext},-} f =  
\vol(\mathcal{M}_-)^{-1/2} \sum_m |m| c_{m0} e^{im\theta} + 
\sum_{m \in \Z,l \geq 1} -c_{ml}\frac{ \mu_l K_{|m|}'(\mu_l)}{K_{|m|}(\mu_l)}  e^{im\theta} \psi_n^-(y) .
\label{Lambda-sum}\end{equation}
The convergence of this sum  follows in a similar way to the proofs above, using \eqref{K'-est} and the rapid decrease of $c_{ml}$. Moreover, differentiating in either $\theta$ or $y$ brings down powers of $m$ (in the case of $\theta$) or increases the sup norm in the $y$ variable by powers of $\mu_l$. Due to the rapid decrease of the $c_{ml}$, the series remains uniformly convergent under repeated differentiation in $(\theta, y)$, and therefore converges to a $C^\infty$ function. 

To show that $\Lambda_{\textup{ext},-}$ is a pseudodifferential operator of order $1$, we use the fact that this is true on a compact manifold with boundary. We can compare $\Lambda_{\textup{ext},-}$ with the Dirichlet-to-Neumann operator on a compact manifold with boundary by compactifying $E_- $ stereographically at infinity, leaving the metric unchanged in a neighbourhood of $\partial_- K$. Let us denote the resulting compact Riemannian manifold with boundary by $\tilde E_-$. It has an associated  DtN operator $\tilde{\Lambda}$ which is a pseudodifferential operator of order $1$ --- see \cite[Section 11, Chapter 7]{Taylor2}. Moreover, the principal symbol of $\tilde{\Lambda}$ is $|\zeta|$, as follows from \cite[Chapter 7, equations (11.8) and (11.35)]{Taylor2}. We claim that the Schwartz kernel of $\tilde{\Lambda} - \Lambda_{\textup{ext},-}$ is smooth. To see this, take any distribution $f$ on $\partial_- K$, and let $\tilde u$ and $u$ be the bounded extensions to $\tilde E_-$ and $E_-$, respectively. Then $\tilde u - u$ is harmonic near $\partial_-K$ with zero boundary condition, hence $C^\infty$. It follows that $\Lambda_{\textup{ext},-}$ differs from $\tilde \Lambda$ by a smooth Schwartz kernel. Therefore, $\Lambda_{\textup{ext},-} \in \Psi^1(\partial_-K)$ is also a pseudodifferential operator of order $1$, with the same principal symbol as $\tilde{\Lambda}$. 
\end{proof}


\subsection{An elliptic boundary value problem}

In preparation for the key lemma below, we set up an elliptic boundary problem on the compact part $K$ of our manifold $\M$. This will allow us to solve Laplace's equation globally on $\M$ as well as find global harmonic functions satisfying certain asymptotics at infinity. 


Let $K$ be as in Section~\ref{subsec:harmonic}. 
Given $F \in L^2(\M)$ with support contained in the interior of $K$, we seek a solution to the equation 
\begin{equation}
\Delta u = F
\label{Laplace}\end{equation}
satisfying the boundary condition
\begin{equation}
Bu := \partial_\nu u - \Lambda_{\textup{ext},\pm} (u |_{\partial_\pm K}) = 0 \text{ at } \partial_\pm K
\label{Dtn-bc}\end{equation}
where we recall that $\nu = -\partial_r$ is the inward-pointing unit normal vector field at $\partial K$. 

The point of this boundary problem is that, if $F$ is as above and $u$ is a solution, then $u$ extends to a function on the whole manifold $\M$ that is harmonic on $E_\pm$, bounded on the end $E_-$, and decaying on the end $E_+$. In fact, we have already checked that, given the boundary values $u |_{\partial_\pm K}$ of $u$,  there is a unique harmonic function on the ends with these boundary values and which is bounded on the $E_-$ end and decaying to zero on the $E_+$ end. Then the normal derivative of the extension $u_\pm$ on $E_\pm$ is given by $\Lambda_{\textup{ext},\pm} u |_{\partial_\pm K}$, which, by construction, agrees with the normal derivative of the solution inside $K$. Since both $u_\pm$ and $u$ are harmonic in a neighbourhood of $\partial_\pm K$ (due to the support assumption on $F$), and their Cauchy data agree, they glue together to form a  function that is smooth across $\partial K$ and harmonic outside the support of $F$. 

To solve such a boundary problem we need to show

\begin{lemma}\label{lem:elliptic}
The equation \eqref{Laplace} with boundary condition \eqref{Dtn-bc} is elliptic in the sense of H\"ormander. 
\end{lemma}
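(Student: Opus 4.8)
The plan is to verify the two standard ingredients of ellipticity of a boundary value problem: interior ellipticity of the operator, and the Shapiro--Lopatinski (Lopatinski--Shapiro) condition for the boundary operator --- in the pseudodifferential formulation, the statement that the composition of the boundary operator with the Calder\'on projector of $\Delta$ is an elliptic pseudodifferential operator on $\partial_\pm K$. Interior ellipticity is immediate, since $\Delta$ is the (positive) Laplacian, which is elliptic of order $2$. For the order count of the boundary data: the boundary operator $B = \gamma_1 - \Lambda_{\textup{ext},\pm}\,\gamma_0$ (writing $\gamma_0 u = u|_{\partial_\pm K}$, $\gamma_1 u = \partial_\nu u|_{\partial_\pm K}$) is pseudodifferential of order $1$, since $\gamma_1$ has order $1$ and $\Lambda_{\textup{ext},\pm}$ has order $1$ by the Proposition above; this is one boundary condition of order $1 = 2-1$ for a second-order elliptic operator, the expected count.

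The heart of the matter is the model problem at the boundary. I would work in the product coordinates $(t,x')$ available in a neighbourhood of $\partial_\pm K$ inside $K$, where $t \ge 0$ is the inward normal variable with $\partial_t = \nu = -\partial_r$ and $x' = (\theta, y)$, and where the metric is exactly $dt^2 + h$ with $h$ the metric on the cross-section $\partial_\pm K$. Then the principal part of $\Delta$ is $-\partial_t^2 + \Delta_h$; freezing coefficients at a boundary point $z_0$ and conjugating by the tangential Fourier transform with dual variable $\zeta \neq 0$ reduces the interior equation to the ODE $(-\partial_t^2 + |\zeta|^2)\,v = 0$ on $t>0$, where $|\zeta|$ is the length in the product metric. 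Its space of solutions bounded as $t\to+\infty$ is one-dimensional, spanned by $v(t) = e^{-|\zeta| t}$, with Cauchy data $(v(0),\partial_\nu v(0)) = (1, -|\zeta|)$ at $t=0$. Replacing $\gamma_1$ by the normal symbol and $\Lambda_{\textup{ext},\pm}$ by its principal symbol $|\zeta|$, the model boundary operator applied to this solution gives
$$
\sigma_1(B)(z_0,\zeta)\bigl(v(0),\partial_\nu v(0)\bigr) = \partial_\nu v(0) - |\zeta|\,v(0) = -|\zeta| - |\zeta| = -2|\zeta| \neq 0 .
$$
Thus the only bounded solution of the model interior equation killed by the model boundary operator is $v\equiv 0$; equivalently, $\sigma_1(B)$ restricted to the range of the Calder\'on projector of $\Delta$ is invertible. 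Since $\partial_\pm K$ is compact and $|\zeta|$ is the length in a fixed metric, this holds uniformly, which is exactly ellipticity in the sense required.

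The one point needing care is the sign bookkeeping, and it is where the structure of the problem enters. The condition succeeds precisely because $\Lambda_{\textup{ext},\pm}$ is the Dirichlet-to-Neumann map of the \emph{exterior} regions $E_\pm$ taken with the normal $\nu$ pointing \emph{into} $K$, so its (positive) symbol $+|\zeta|$ has the opposite sign to the ratio $\partial_\nu v(0)/v(0) = -|\zeta|$ for solutions decaying into $K$; the difference is $-2|\zeta|$, manifestly nonzero. (Had the boundary condition instead been $\partial_\nu u + \Lambda_{\textup{ext},\pm} u = 0$, the model symbol would vanish and the problem would fail to be elliptic.) Structurally $B = \partial_\nu - \Lambda$ is a Robin/Steklov-type condition with $\Lambda$ a nonnegative first-order elliptic pseudodifferential operator, which is a classically elliptic configuration; I do not anticipate any further obstacle beyond making the product-coordinate model and the sign conventions precise.
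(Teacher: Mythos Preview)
Your proposal is correct and follows essentially the same approach as the paper: both verify the Lopatinski--Shapiro condition by showing that the one-dimensional space of bounded solutions of the model interior ODE is not annihilated by the model boundary operator, the key computation in each case yielding $-2|\zeta|\neq 0$. Your version is arguably a little cleaner, since by working directly in product coordinates with the inward normal variable you avoid the paper's auxiliary substitution $v=e^{i\xi_n t}u$ used to handle the conormal component in H\"ormander's general formulation.
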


\begin{proof}
Since $\Delta$ is clearly elliptic, it only remains to check that the boundary condition is elliptic.  
Set $D_t=-i\frac{\partial}{\partial_t}$. 
Recall that a second order PDE $Pu = f$ with boundary condition $B u = 0$, where $B = \Op(b)$ is a differential operator in the normal variable with coefficients that are pseudodifferential operators in the tangential variables, is elliptic if $P$ is elliptic and if 
 for all $x' \in \d K, \xi \in T_{x'}^*(\d K)$, where $\xi$ is not proportional to the interior conormal $n_{x'}$, we have that
\begin{equation*}
b(x', \xi+D_t n_{x'})u(0): M_{x',\xi}^+ \rightarrow \C
\end{equation*}
is bijective. Here $M_{x',\xi}^+ = \{ u \in C^\infty(\R_+): u \text{ bounded}, \; p(x',\xi+D_t n_x)u(t) = 0 \}$, $p$ is the principal symbol of $P$ and $b$ is the principal symbol of $B$.

In our situation, $b(x', (\xi', \xi_n)) = i \xi_n - |\xi'|$ and $p(x', \xi) = |\xi'|^2 + \xi_n^2$, where $\xi = (\xi_1,\ldots,\xi_n) = (\xi',\xi_n)$. The set $M_{x',\xi}^+$ consists of the bounded solutions to
\begin{equation*}
-u''(t)-2i\xi_n u'(t) + (|\xi'|^2 + \xi_n^2)u(t) = 0.
\end{equation*}
Letting $v = e^{i \xi_n t}u$ implies
\begin{equation*}
-v''(t) = e^{i \xi_n t} \left(-u''(t)-2i\xi_n u'(t) + \xi_n^2 u(t) \right) = -|\xi'|^2 v(t).
\end{equation*}
So $v(t) = Ce^{-|\xi'|t}$, where $C$ is a constant.
Hence, $M_{x',\xi}^+$ is the set of multiples of the function $e^{-(|\xi'| + i\xi_n) t}$, and we need to check that applying $b(x', \xi+D_t n_{x'})$ does not kill this function. 
Clearly 
$$
b(x', \xi+D_t n_{x'})  e^{-(|\xi'| + i\xi_n) t} = -2 |\xi'| e^{-(|\xi'| + i\xi_n) t}
$$
which never vanishes when $\xi' \neq 0$. This verifies that \eqref{Laplace} with boundary condition \eqref{Dtn-bc} is indeed an elliptic boundary problem. 
\end{proof}

\begin{remark} We have used the definition of elliptic boundary problem from \cite[Chapter 20, Definition 20.1.1]{Ho3}. In this definition, it is assumed that the boundary operators $B_j$ are differential. However, the theory makes perfect sense if the $B_j$ are pseudodifferential operators on the boundary, as remarked on \cite[p242]{Ho3}. See also \cite[Chapter 7, Sections 11 and 12]{Taylor2}, where pseudodifferential boundary conditions are considered.
\end{remark}

As a consequence of the theory of elliptic boundary value problems, the map 
\begin{equation*}
(Pu, Bu): {H}^2(K) \rightarrow L^2(K) \oplus H^{\frac12}(\partial K)
\end{equation*}
is a Fredholm operator --- see \cite[Chapter 20, Theorem 20.1.2]{Ho3}. It follows that, restricting to functions $u$ satisfying $Bu = 0$, the map 
\begin{equation}
\{ u \in H^2(K) \mid Bu = 0 \} \mapsto \Delta u \in L^2(K)
\label{Delta-Dtn}\end{equation}
is also Fredholm. 

We now claim further that the map $A$ in \eqref{Delta-Dtn} (that is, with domain $\{ u \in H^2(K) \mid Bu = 0 \}$) is invertible. This proceeds in two steps. The first step is to show that $A$ is self-adjoint. To see this, first recall that $\Lambda_{\textup{ext},\pm} $ is self-adjoint on $L^2(\partial K)$ with domain $H^1(\partial K)$. This implies that $A$ is a symmetric operator: for $u, v \in \dom A$, we have 
\begin{multline*}
\langle \Delta u, v \rangle - \langle u, \Delta v \rangle = \langle \partial_\nu u, v \rangle_{\partial K} -  \langle u, \partial_\nu v \rangle_{\partial K} 
= \langle \Lambda_{\textup{ext},\pm} u, v \rangle_{\partial K} -  \langle u, \Lambda_{\textup{ext},\pm} v \rangle_{\partial K} = 0. 
\end{multline*}
Thus the domain of $A^*$ contains the domain of $A$. 

Conversely, suppose $v \in \dom A^*$. Then,  there exists $\psi \in L^2(K)$ satisfying
$$
\langle \Delta u, v \rangle = \langle u, \psi \rangle
$$
for all $u \in \dom A$ (and then we define $A^*v = \psi$). By elliptic regularity, this holds only if $\Delta v \in L^2(K)$, which implies that $v \in H^2(K)$. For such $v$ we have, using Green's formula,
$$
\langle \Delta u, v \rangle = \langle u, \Delta v \rangle + \langle \partial_\nu u, v \rangle_{\partial K} -  \langle u, \partial_\nu v \rangle_{\partial K}
$$
where the last two inner products are in $L^2(\partial K)$. Using the boundary condition and the self-adjointness of $\Lambda_{\textup{ext},\pm} $ we write this as 
$$
\langle u, \psi \rangle = \langle \Delta u, v \rangle = \langle u, \Delta v \rangle + \langle  u, \Lambda_{\textup{ext},\pm} v - \partial_\nu v \rangle_{\partial K} .
$$
This can only hold for all $u \in \dom A$ if $\Delta v = \psi$ and $\Lambda_{\textup{ext},\pm} v - \partial_\nu v = 0$ at $\partial_\pm K$; that is, if $v \in \dom A$. 
This shows that $A$ is self-adjoint. 

The second step is to prove invertibility. Since $A$ is self-adjoint, its index is zero. So it is invertible if and only if we can show

\begin{lemma}\label{lem:uniqueness}
The homogeneous problem $\Delta u = 0$ for $u \in \dom A$ has only the trivial solution. 
\end{lemma}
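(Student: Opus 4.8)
The plan is to show that any $u \in \dom A$ with $\Delta u = 0$ must vanish, by using the boundary condition to glue $u$ to its harmonic extensions on the ends and then invoking a global energy/uniqueness argument on $\M$. First I would extend $u$ from $K$ to all of $\M$: on each end $E_\pm$, replace $u|_{E_\pm}$ by the harmonic extension $\E_\pm(u|_{\partial_\pm K})$ provided by Lemmas~\ref{lem:harmonic-} and \ref{lem:harmonic+}. Since $u$ is harmonic inside $K$ (as $\Delta u = 0$ there) and the boundary condition $Bu = 0$ says precisely that $\partial_\nu u = \Lambda_{\textup{ext},\pm}(u|_{\partial_\pm K})$, which equals the normal derivative of $\E_\pm(u|_{\partial_\pm K})$, the Cauchy data of the interior solution and the end extensions agree along $\partial_\pm K$. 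As explained in the paragraph preceding Lemma~\ref{lem:elliptic}, this means the glued function $\tilde u$ is smooth across $\partial K$ and harmonic on all of $\M$, bounded on $E_-$ (tending to a constant) and $O(r^{2-n_+})$ on $E_+$ (so tending to $0$ there).

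Next I would run an integration-by-parts argument to conclude $\tilde u$ is constant. Take a large radius $\rho$ and integrate $|\nabla \tilde u|^2$ over the region $K_\rho = \{ r \le \rho\}$. Green's formula gives $\int_{K_\rho} |\nabla \tilde u|^2 = \int_{\partial K_\rho} \tilde u \, \partial_r \tilde u$, since $\Delta \tilde u = 0$. On the $E_+$ part of $\partial K_\rho$, the asymptotics from Lemma~\ref{lem:harmonic+} give $\tilde u = O(\rho^{2-n_+})$ and $\partial_r \tilde u = O(\rho^{1-n_+})$, while the boundary sphere has area $O(\rho^{n_+ - 1})$, so that contribution is $O(\rho^{2 - n_+}) \to 0$ since $n_+ \ge 3$. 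On the $E_-$ part, Lemma~\ref{lem:harmonic-} gives $\tilde u \to $ const (so $\tilde u$ is bounded) and $\partial_r \tilde u = O(\rho^{-2})$, while the boundary circle cross $\M_-$ has measure $O(\rho)$, so that contribution is $O(\rho^{-1}) \to 0$ as well. Hence $\int_\M |\nabla \tilde u|^2 = 0$, so $\tilde u$ is constant. Finally, since $\tilde u$ decays to zero on $E_+$, that constant is $0$, so $\tilde u \equiv 0$, and in particular $u = 0$ on $K$.

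The main obstacle, and the one point that needs care, is justifying that the boundary terms genuinely vanish: this rests on the \emph{full} asymptotic expansions for $\tilde u$ and $\partial_r \tilde u$ established in Lemmas~\ref{lem:harmonic-} and \ref{lem:harmonic+} — in particular the improved decay $\partial_r u = O(r^{-2})$ on $E_-$ (rather than the naive $O(r^{-1})$ one might fear from a bounded harmonic function), which is exactly what makes the $E_-$ flux term die. One should also note at the outset that $u \in H^2(K) \subset H^1(K)$ so the interior energy integral is finite and Green's formula is legitimate, and that the gluing produces a function whose gradient is genuinely in $L^2$ of each truncated region. Everything else is routine: the self-adjointness and Fredholm index-zero facts already recorded before the lemma then upgrade this uniqueness statement to invertibility of $A$.
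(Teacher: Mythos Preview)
Your proposal is correct and follows essentially the same approach as the paper: extend $u$ to a global harmonic function using the boundary condition, then run the energy identity over the exhaustion $K_\rho$ and use the asymptotics from Lemmas~\ref{lem:harmonic-} and \ref{lem:harmonic+} (in particular the crucial $\partial_r u = O(r^{-2})$ on $E_-$) to kill the boundary flux, concluding $u$ is constant and hence zero since it vanishes at infinity on $E_+$. The paper's argument is identical in structure and in the decay-counting on each end.
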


\begin{proof}
Suppose $u \in \dom A$ solves $\Delta u = 0$. Then, as discussed above Lemma~\ref{lem:elliptic},  $u$ extends globally to $\M$ to a bounded harmonic function, decaying to zero on the $E_+$ end. 
By elliptic regularity, $u$ is $C^\infty$. 

Let $K_r$ be the set 
$$
K_r = K \cup \{ (x, y) \in E_- \mid |x| \leq r \} \cup \{ (x, y) \in E_+ \mid |x| \leq r \}, \quad r > R. 
$$
We compute, for $r > R$, 
\begin{equation}
\int_{K_r} |\nabla u|^2 \, dg = \int_{K_r} \Delta u \overline{u} \, dg - \int_{\partial K_r} \partial_\nu u \overline{u} \, d\sigma = - \int_{\partial K_r} \partial_\nu u \overline{u} \, d\sigma
\label{nablau2}\end{equation}
where $dg$ denotes the Riemannian measure on $\M$ and $d\sigma$ induced surface measure on $\partial K_r$. Note that the normal derivative $\partial_\nu$ here coincides with minus the radial derivative, $-\partial_r$. 

On the end $E_-$, $u$ is bounded, while, thanks to the asymptotic expansion of $\partial_r u$ from Lemma~\ref{lem:harmonic-}, we have 
\begin{equation}
|\partial_r u| \leq \frac{C}{r^2}
\end{equation}
on the $E_-$ end. This decay beats the growth of the surface measure of $K_r \cap E_-$, which is $O(r)$, so the contribution of this part to the limit in \eqref{nablau2} vanishes in the limit. Similarly, on the $E_+$ end, $u$ has an asymptotic expansion, beginning with the power $r^{2-n_+}$, while $|\partial_\nu u| \leq C r^{1-n_+}$. Since the surface measure of $K_r \cap E_+$ is $O(r^{n_+-1})$, and $n_+ \geq 3$, the contribution from $E_+$ also vanishes in the limit. 
It follows that the surface integral in \eqref{nablau2} decays to zero. We conclude that 
$$
\int_{\M} |\nabla u|^2 \, dg = 0
$$
and hence that $u$ is constant. Since $u$ tends to zero along the $E_+$ end, this constant is zero, proving the lemma.
\end{proof}

An immediate consequence of Lemma~\ref{lem:uniqueness} is the following result. 

\begin{lemma}\label{lem:Laplace-solvability} Let $F \in L^2(\M)$ be supported in the interior of $K$. Then there is a unique global solution $u$ to Laplace's equation $\Delta u = F$ on $\M$ such that $u$ is bounded on the $E_-$ end and tending to zero at the $E_+$ end. Moreover, $u$ tends to a constant $\beta$ as $r \to \infty$ on $E_-$, with an asymptotic expansion on $E_-$ in nonpositive integral powers of $r$, and has an asymptotic expansion on $E_+$ in negative integral powers of of $r$, as in Lemmas ~\ref{lem:harmonic-} and ~\ref{lem:harmonic+}. 
\end{lemma}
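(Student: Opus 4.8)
The plan is to deduce the lemma from two ingredients already in place: the invertibility of the operator $A$ from \eqref{Delta-Dtn}, and the harmonic extension operators $\E_\pm$ of Lemmas~\ref{lem:harmonic-} and \ref{lem:harmonic+}. For existence, since $F$ is supported in the interior of $K$ its restriction to $K$ lies in $L^2(K)$, so I would first invoke the invertibility of $A$ (self-adjoint by the argument preceding Lemma~\ref{lem:uniqueness}, injective by that lemma, hence invertible since its index is zero) to obtain a unique $u_K \in H^2(K)$ with $B u_K = 0$ and $\Delta u_K = F$ on $K$. I would then set $u := u_K$ on $K$ and $u := \E_\pm(u_K|_{\partial_\pm K})$ on $E_\pm$, and check, exactly as in the paragraph preceding Lemma~\ref{lem:elliptic}, that this glues to a global solution: $u_K$ is harmonic near $\partial_\pm K$ because $F$ vanishes there, its boundary value agrees with that of $\E_\pm(u_K|_{\partial_\pm K})$ by construction, and its normal derivative agrees because the boundary condition $B u_K = 0$ says precisely $\partial_\nu u_K = \Lambda_{\textup{ext},\pm}(u_K|_{\partial_\pm K}) = \partial_\nu \E_\pm(u_K|_{\partial_\pm K})$ (Definition~\ref{def:DtN}). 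Matching Cauchy data along $\partial K$, with each piece harmonic on its side, forces the glued function to be smooth across $\partial K$ and harmonic off $\supp F$, so $\Delta u = F$ holds globally on $\M$; boundedness on $E_-$ and the bound $O(r^{2-n_+})$ on $E_+$ then come from Lemmas~\ref{lem:harmonic-} and \ref{lem:harmonic+}.

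For uniqueness, I would take the difference $w = u - u'$ of two such solutions: it is globally harmonic, bounded on $E_-$ and decaying to zero on $E_+$. Restricting to $K$, $w \in H^2(K)$ by elliptic regularity, and $Bw = 0$ since on each end $w$ coincides with the bounded (resp. decaying) harmonic extension of $w|_{\partial_\pm K}$, whose normal derivative is $\Lambda_{\textup{ext},\pm}(w|_{\partial_\pm K})$ by Definition~\ref{def:DtN}. Hence $w \in \dom A$ solves $\Delta w = 0$, so $w = 0$ by Lemma~\ref{lem:uniqueness}. The asymptotic statements are then inherited directly: on $E_-$, the expansion of $u = \E_-(u_K|_{\partial_- K})$ from Lemma~\ref{lem:harmonic-} shows $u$ tends to a constant $\beta$ (namely $\vol(\mathcal{M}_-)^{-1/2}$ times the zeroth Fourier--eigenfunction coefficient of $u_K|_{\partial_- K}$) with a complete expansion in nonpositive integral powers of $r$; on $E_+$, Lemma~\ref{lem:harmonic+} gives the complete expansion in negative integral powers of $r$, and likewise for the radial derivatives.

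I do not expect a genuine obstacle here --- this is why the lemma is labelled an immediate consequence of Lemma~\ref{lem:uniqueness} --- but the one step deserving care is the gluing across $\partial K$: one must verify that the piecewise-defined $u$ contributes no distribution supported on $\partial K$ to $\Delta u$, which is exactly what the equality of Cauchy data on $\partial_\pm K$ guarantees. Everything else is bookkeeping with the expansions furnished by Lemmas~\ref{lem:harmonic-} and \ref{lem:harmonic+}.
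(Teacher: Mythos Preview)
Your proposal is correct and follows exactly the approach the paper takes: solve the boundary value problem \eqref{Laplace}--\eqref{Dtn-bc} on $K$ using the invertibility of $A$, then extend to $E_\pm$ via $\E_\pm$ and invoke the gluing argument described just before Lemma~\ref{lem:elliptic}. The paper's proof is a single sentence to this effect; your write-up simply unpacks the details (including the uniqueness argument via Lemma~\ref{lem:uniqueness}) that the paper leaves implicit.
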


\begin{proof} We solve the boundary value problem \eqref{Laplace}, \eqref{Dtn-bc} for $f |_K$ and extend to
the ends as discussed above.
\end{proof}

We can also use the elliptic boundary problem to find a global harmonic function with logarithmic growth at the $E_-$ end:

\begin{lemma}\label{lem:U}
There exists a unique global harmonic function, $\mathcal{U}$, on $\M$ such that
\begin{equation*} 
\mathcal{U}(z) = 
\begin{cases}
 \log r + c_1 + O(r^{-1}) \qquad &\text{as } r \rightarrow \infty \text{ on the } E_- \text{ end} \\
O(r^{-1}) \qquad &\text{as } r \rightarrow \infty \text{ on the } E_+ \text{ end}.
\end{cases}
\end{equation*}
\end{lemma}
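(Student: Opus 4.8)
The plan is to mimic the construction of the bounded harmonic functions in Lemma~\ref{lem:Laplace-solvability}, but with an inhomogeneous term that forces logarithmic growth at $E_-$. First I would fix a smooth cutoff $\chi$ on $\M$ that equals $1$ on $E_- \cap \{r \geq 2R\}$ and is supported in $E_- \cap \{ r \geq R\}$, and set $g = \chi \log r$, so that $g$ is a globally defined smooth function that agrees with $\log r$ near infinity on $E_-$ and vanishes identically on $K$ and on $E_+$. Since $\log r$ is harmonic on $\R^2 \setminus \{0\}$, the function $F := \Delta g = [\Delta, \chi]\log r$ is smooth, supported in the annular region $E_- \cap \{ R \leq r \leq 2R\}$, and in particular lies in $L^2(\M)$ with support in $E_-$ (not in the interior of $K$, so I cannot quote Lemma~\ref{lem:Laplace-solvability} verbatim, but see below). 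The candidate is then $\mathcal{U} := g - u$, where $u$ solves $\Delta u = F$ with $u$ bounded on $E_-$ and decaying on $E_+$; then $\Delta \mathcal{U} = 0$ globally, $\mathcal{U} = \log r - u$ near infinity on $E_-$ with $u \to \beta$ and an expansion in nonpositive powers of $r$ (giving $\mathcal{U} = \log r + c_1 + O(r^{-1})$ with $c_1 = -\beta$), and on $E_+$ we have $g = 0$ so $\mathcal{U} = -u = O(r^{2-n_+}) = O(r^{-1})$ since $n_+ \geq 3$.

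The one gap is that $F$ is supported in $E_-$ rather than in the interior of $K$, so I would either (i) enlarge $K$ to $K' = K \cup (E_- \cap \{r \leq 3R\})$, which is still a compact set whose boundary lies in the product region, redo the (routine) setup of Section~\ref{subsec:harmonic} and the elliptic boundary problem with $K'$ in place of $K$ — all the arguments, including the ellipticity (Lemma~\ref{lem:elliptic}), self-adjointness, and uniqueness (Lemma~\ref{lem:uniqueness}), go through unchanged since they only use the product structure near $\partial K'$ — and then apply the analogue of Lemma~\ref{lem:Laplace-solvability} with $\supp F \subset \operatorname{int} K'$; or (ii) equivalently, first solve $\Delta w = F$ locally on a neighbourhood of $\supp F$ and push the correction into $K$. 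Approach (i) is cleaner. With $u$ in hand, $\mathcal{U} = g - u$ is the desired function, and its asymptotic expansions at both ends are inherited term-by-term from those of $u$ provided by Lemmas~\ref{lem:harmonic-} and \ref{lem:harmonic+}.

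For uniqueness: if $\mathcal{U}_1, \mathcal{U}_2$ both satisfy the stated asymptotics, then $w = \mathcal{U}_1 - \mathcal{U}_2$ is globally harmonic, $w = O(1)$ (in fact $O(r^{-1})$ after the constants cancel — but $O(1)$ suffices) at $E_-$ and $w = O(r^{-1})$ at $E_+$, with $\partial_r w = O(r^{-2})$ on $E_-$ and $\partial_r w = O(r^{-2})$ on $E_+$ by differentiating the expansions. Then the integration-by-parts argument of Lemma~\ref{lem:uniqueness} applies verbatim: the boundary term $\int_{\partial K_r} \partial_\nu w\, \overline{w}\, d\sigma$ is $O(r^{-1})$ on $E_-$ (since $|w|\le C$, $|\partial_r w| \le Cr^{-2}$, surface measure $O(r)$) and $O(r^{-1})$ on $E_+$ (since $|w| \le Cr^{-1}$, $|\partial_r w|\le Cr^{-2}$, surface measure $O(r^{n_+-1})$ and $n_+\ge 3$... in fact one only needs $n_+ \ge 2$ here), so it vanishes as $r \to \infty$, forcing $\nabla w \equiv 0$, hence $w$ constant, and the constant is $0$ because $w \to 0$ at $E_+$.

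The step I expect to be the main (minor) obstacle is purely bookkeeping: verifying that enlarging $K$ to $K'$ does not disturb any of the earlier machinery — specifically that $\partial K'$ still consists of Euclidean spheres times $\M_\pm$ in the product region, so that the Dirichlet-to-Neumann operators $\Lambda_{\textup{ext},\pm}$ and the ellipticity/self-adjointness/uniqueness package of Lemmas~\ref{lem:elliptic}–\ref{lem:uniqueness} transfer without change. Everything else — the choice of cutoff, the identity $\Delta \mathcal{U} = 0$, and the matching of asymptotics — is immediate from what has already been established.
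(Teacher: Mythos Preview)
Your proposal is correct and follows essentially the same approach as the paper's proof. The only difference is cosmetic: the paper chooses the cutoff $\chi$ to be supported on $E_- \cup K$ with $\supp \nabla \chi$ already contained in the interior of $K$ (recall $r$ is globally defined and $\log r$ is harmonic on $E_-$), so that $F = -\Delta(\chi \log r)$ is supported in $\operatorname{int} K$ from the outset and Lemma~\ref{lem:Laplace-solvability} applies directly, avoiding your enlargement of $K$ to $K'$; the uniqueness argument is identical.
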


\begin{proof}
Choose a smooth cutoff function $\chi(r)$ supported on $E_- \cup K$ such that $\chi(r) = 1$ for $r$ large and is  such that $\supp \nabla \chi$ is contained in the interior of $K$.  Let $w_1 = \chi(r) \log r$. Then $F := -\Delta w_1$ is supported in $K$. Now solve the elliptic boundary problem 
$$
\Delta w_2 = F \text{ on } K, \quad B w_2 = 0 \text{ on } \partial K
$$
and extend $w_2$ to be harmonic outside $K$, bounded on $E_-$ and decaying to zero on $E_+$. Then the function 
$\mathcal{U} = w_1 + w_2$ is globally harmonic and has the given asymptotics. Uniqueness follows as in the proof of Lemma~\ref{lem:uniqueness}. 
\end{proof}


\subsection{The key lemma}

A key lemma in \cite{CCH} stated the following. (We have paraphrased the result; there were more precise regularity statements in \cite{CCH}.)

\begin{lemma}[Lemma 3.2 of \cite{CCH}]
Let $\M$ be a connected sum of Euclidean spaces of dimension $d \geq 3$. Then, for any $v \in C_c^\infty(\M)$, there is an approximate solution $u(z, k)$ of the equation $(\Delta + k^2) u = v$ such that $|u(z, k)| \leq Cr^{2-d} e^{-ckr}$ for some $c> 0$ and $|(\Delta + k^2) u - v| \leq C k r^{-\infty}$. 
\end{lemma}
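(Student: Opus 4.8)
\medskip
\noindent\emph{How I would prove this.} The plan is to perturb off the zero-energy equation $\Delta u_0 = v$. Since $v$ is compactly supported, the analogue of Lemma~\ref{lem:Laplace-solvability} in the case where every Euclidean dimension is at least $3$ (or, as in \cite{CCH}, Melrose's b-calculus on the radial compactification of $\M$) produces a solution $u_0$ of $\Delta u_0 = v$ on $\M$ that decays at every end; on each end $E_i$, isometric to the complement of a compact set in $\R^d$, the function $u_0$ is harmonic outside a compact set, so $|u_0| \le C r^{2-d}$ and $u_0$ has a complete asymptotic expansion in nonpositive integer powers of $r$. By itself $u_0$ fails the asserted pointwise bound, decaying only polynomially rather than like $e^{-ckr}$; the construction below grafts on, near each end, a ``$k$-deformed'' version of $u_0$ with the right exponential decay.

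First I would fix cutoffs $\psi_i \in C^\infty(\M)$, supported in $E_i$, with $\psi_i \equiv 1$ on $\{r \ge 2R_0\}$ and $\psi_i \equiv 0$ on $\{r \le R_0\}$, with $R_0$ large enough that $\supp v \subset \{r < R_0\}$. Viewing $h_i := \psi_i u_0$ as a function on the model space $\R^d$ (extended by zero), we have $|h_i| \le C r^{2-d}$, and $\Delta h_i = [\Delta,\psi_i]u_0$ is supported in the compact set $\{R_0 \le r \le 2R_0\}$. Set $v_i(z,k) := (\Delta_{\R^d} + k^2)^{-1}(\Delta h_i)$ on $\R^d$, so that $(\Delta + k^2)v_i = \Delta h_i$ and, by the model resolvent bound \eqref{resolvent-3} (with no compact factor, so $N = d$) together with the compact support of $\Delta h_i$, $|v_i(z,k)| \le C r^{2-d}e^{-ckr}$ on $E_i$. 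Then define on $\M$
\begin{equation*}
u(z,k) := u_0(z) + \sum_i \psi_i(z)\big(v_i(z,k) - h_i(z)\big),
\end{equation*}
which equals $u_0$ where all the $\psi_i$ vanish and equals $v_i$ on $\{r \ge 2R_0\}$ in $E_i$.

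It remains to verify the two conclusions. The pointwise bound is immediate: $u$ is bounded uniformly in $k$ on the compact set $\{r \le 2R_0\}$ (since $v_i - h_i \to 0$ with $k$ there, by the estimate below), and equals $v_i$, hence is $O(r^{2-d}e^{-ckr})$, on $E_i \cap \{r \ge 2R_0\}$. For the error, the identity $(\Delta+k^2)(v_i - h_i) = \Delta h_i - (\Delta + k^2)h_i = -k^2 h_i$ on $\R^d$ gives, after a short computation,
\begin{equation*}
(\Delta + k^2)u - v = k^2\Big(1 - \sum_i \psi_i^2\Big)u_0 + \sum_i [\Delta,\psi_i]\big(v_i - h_i\big).
\end{equation*}
Since $1 - \sum_i \psi_i^2$ is supported in the fixed compact set $\{r \le 2R_0\}$, the first term is $O(k^2 r^{-\infty})$. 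In the second, $[\Delta,\psi_i]$ is a first-order operator supported in $\{R_0 \le r \le 2R_0\}$, while on $\R^d$ one has $v_i - h_i = -k^2(\Delta_{\R^d} + k^2)^{-1}h_i$, so it suffices to show that $v_i - h_i$ and $\nabla(v_i - h_i)$ are $O(k)$ on that fixed compact set.

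The step I expect to be the main obstacle is this last estimate. One applies $(\Delta_{\R^d}+k^2)^{-1}$ and its gradient to the only slowly decaying function $h_i = O(r^{2-d})$, and must check that the outcome is no worse than $O(k^{-1})$ on a \emph{bounded} region, so that the prefactor $k^2$ leaves an overall $O(k)$. The contribution of the far part of $h_i$ is controlled precisely by the exponential factor $e^{-ckd(z,z')}$ in \eqref{resolvent-3} and \eqref{gradresolvent-3}: it reduces to an integral of the form $k^2\int_{R_0}^\infty \rho^{3-d}e^{-ck\rho}\,d\rho$ (and a similar one with $\rho^{2-d}$ for the gradient), which is $O(k)$ precisely because $d \ge 3$ --- this is exactly where the dimensional hypothesis enters. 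Everything else (the global solvability producing $u_0$, the asymptotic expansions, and the commutator bookkeeping) is routine; in the low dimensions $d = 3, 4$ the integral above carries an extra, harmless, factor of $\log(1/k)$.
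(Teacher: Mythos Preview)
Your argument is correct. Both your approach and the one the paper cites from \cite{CCH} begin by solving $\Delta u_0 = v$ at zero energy with $u_0 = O(r^{2-d})$ on each end; the difference is in how the extension to $k>0$ is carried out. In \cite{CCH} (and in the present paper's own key Lemma~\ref{u2lemma}) the extension is done mode-by-mode: one expands $u_0$ on each end in spherical harmonics and replaces each decaying power $r^{-m}$ by the corresponding modified-Bessel solution of the radial ODE, working on a blow-up of $\M \times [0,k_0]$ at $r=\infty$, $k=0$. Your route is more direct: you cut off $u_0$ to a function $h_i$ on the model $\R^d$ and simply apply the free resolvent to the compactly supported $\Delta h_i$, letting the pointwise bound \eqref{resolvent-3} do the work. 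This bypasses the mode decomposition and the blow-up formalism entirely, at the cost of yielding only the crude $O(k)$ error rather than a full asymptotic expansion in $k$ (which is exactly what the paper needs later, in Lemma~\ref{u2lemma}, to identify the $\ilg k$ coefficients). For the statement as written, the crude bound is all that is claimed, so nothing is lost.

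Two small remarks. First, for $d=3,4$ the function $h_i = O(r^{2-d})$ is not in $L^2(\R^d)$, so the identity $v_i - h_i = -k^2(\Delta_{\R^d}+k^2)^{-1}h_i$ needs to be read via the integral kernel and uniqueness of decaying solutions rather than as an $L^2$ statement; this is routine but worth a sentence. Second, your closing comment is slightly off: in $d=3$ the integral $k^2\int_{R_0}^\infty e^{-ck\rho}\,d\rho$ is $O(k)$ with no logarithm; the $\log(1/k)$ appears only for $d=4$.
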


Of course, the latter statement is shorthand for the statement that for each integer $M$ there is a constant $C_M$ such that $|(\Delta + k^2) u - v| \leq C_M k r^{-M}$. Here $r$ is the radial coordinate on the end of each Euclidean space, extended to be a strictly positive function in the interior of $\M$. 

This was proved in \cite{CCH} by first solving $\Delta u(z, 0) = v$ for a function $u$ decaying as $r^{2-d}$ at infinity, and then extending the function into $\{ k > 0 \}$ by working on a blow-up space. The following result is implicit in the proof of Lemma 3.2 of \cite{CCH}.

\begin{lemma}\label{lem:CCH}
Let $u$ be a function on $\R^d$, $d \geq 2$ such that $u(x) \to 0$ as $|x| \to \infty$ and $v = \Delta u$ is compactly supported. Then  $u(x)$ can be extended to a function $u(x, k)$ for $k \geq 0$, agreeing with $u$ at $k=0$, such that 
\begin{equation}
(\Delta + k^2) u - v = O(k r^{-\infty})
\end{equation}
and satisfying 
\begin{equation}
\begin{cases}  |u(x,k)| = O(r^{2-d} e^{-ckr}), \quad |\nabla u(x,k)| = O(r^{1-d} e^{-ckr}),\quad d \geq 3, \\
|u(x,k)| =  O(r^{-1} e^{-ckr}), \quad |\nabla u(x,k)| =  O(r^{-2} e^{-ckr}),\quad d =2, 
\end{cases}
\label{CCH1}\end{equation}
and
\begin{equation}
\big|\partial_r (u(x, k) - u(x, 0)) \big| = \begin{cases} O(k r^{2-d}) ,\quad d \geq 3, \\
O(k r^{-1}), \quad d =2. 
\end{cases}
\label{CCH-approx}\end{equation}
for $k \leq k_0$ and for some $c > 0$. 
\end{lemma}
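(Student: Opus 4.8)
\textbf{Proof proposal for Lemma~\ref{lem:CCH}.}

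The plan is to extend $u$ off $k=0$ explicitly using the known product-end resolvent kernels, correcting only on the compact piece. First I would fix a smooth cutoff $\phi$ on $\R^d$ that is supported away from $\supp v$ and equals $1$ near infinity. Since $u$ is harmonic outside a compact set and decays at infinity, the asymptotics of harmonic functions on $\R^d$ give $u \sim c r^{2-d}$ for $d\ge 3$ (or $u \sim c r^{-1}$ coming from the $\ell=1$ spherical harmonic, since the $\ell=0$ term must vanish by decay) for $d=2$; in either case $u$ has a classical expansion in negative powers of $r$ near infinity, with term-by-term differentiable radial derivative, exactly as in Lemmas~\ref{lem:harmonic-} and~\ref{lem:harmonic+}. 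Then I would set
\begin{equation*}
u(x,k) = \phi(x)\, w(x,k) + (\Delta+k^2)^{-1}_{\R^d}\big[(1-\phi)\Delta u - [\Delta,\phi]w\big],
\end{equation*}
where $w(x,k)$ is the natural $k$-dependent replacement of the leading asymptotic behaviour of $u$: for $d\ge 3$ one takes $w$ to be $u$ itself times the decaying profile, more precisely $w(x,k)$ should be chosen so that $\phi w$ is an approximate solution of $(\Delta+k^2)w=0$ near infinity matching the asymptotics of $u$, e.g. by replacing each monomial $r^{2-d-j}$ appearing in the expansion of $u$ by the corresponding exponentially-decaying solution of the radial ODE (a multiple of $L_{a}(kr)$ with $a=d+2\lfloor\cdot\rfloor$, as in \eqref{a>=1identity}), and for $d=2$ by replacing the $r^{-1}$ (and lower) terms similarly by decaying Bessel solutions $K_{|m|}(kr)$. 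The key point is that these radial solutions solve the homogeneous equation \emph{exactly} on the end, so $(\Delta+k^2)(\phi w) - \Delta(\phi u)$ is supported in $\supp\nabla\phi$, i.e. in a compact set, and is $O(k)$ there (because $L_a(kr)$ and $K_\nu(kr)$ differ from their $k=0$ limits by $O(k^2 r^2 \log)$-type corrections, hence $O(k)$ after one $r$-derivative on a fixed compact set).

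The second step is to estimate the correction term. The bracket $(1-\phi)\Delta u - [\Delta,\phi]w$ is compactly supported (inside a fixed ball $B$) and, crucially, is $O(k)$ in, say, $L^\infty$ as $k\to 0$: the piece $(1-\phi)\Delta u = (1-\phi)v$ is actually $k$-independent, but by construction of $w$ we can arrange that $\phi w$ agrees with $u$ at $k=0$ up to an exactly-harmonic function, so the total bracket at $k=0$ equals $\Delta u = v$ restricted appropriately and — here one must be slightly careful — the correct statement is that $(\Delta+k^2)u(\cdot,0) = v$ exactly at $k=0$, while for $k>0$ the \emph{difference} from the $k=0$ object is $O(k)$ and compactly supported. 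Applying $(\Delta+k^2)^{-1}_{\R^d}$ and using the explicit kernel bounds \eqref{resolvent-3}, \eqref{gradresolvent-3} (for $d\ge 3$) or \eqref{resolvent-2}, \eqref{gradresolvent-2} (for $d=2$) — noting these hold verbatim on $\R^d$ itself with $N=n_\pm=d$, $\M_\pm$ a point — gives, by integrating the kernel bound against a compactly supported $L^1$ density, the pointwise bounds $|u(x,k)| = O(r^{2-d} e^{-ckr})$, $|\nabla u(x,k)| = O(r^{1-d}e^{-ckr})$ for $d\ge 3$; for $d=2$ the resolvent kernel bound \eqref{resolvent-2} contributes a $(1+|\log(kr)|)e^{-ckr}$ factor, but this is multiplied by the $O(k)$ size of the source, and on the region $r\lesssim 1/k$ one has $k(1+|\log kr|) = O(r^{-1})$ up to the exponential, which is absorbed into a slightly smaller $c$; combined with the $\phi w$ piece which is $O(r^{-1}e^{-ckr})$ by the asymptotics of $K_\nu$, this yields \eqref{CCH1} in the case $d=2$. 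The gradient bound for $d=2$ uses \eqref{gradresolvent-2} in the same way. Finally \eqref{CCH-approx}: since $u(x,k)-u(x,0)$ is, modulo the exactly-computable $\phi w$ contribution, equal to $(\Delta+k^2)^{-1}_{\R^d}$ applied to an $O(k)$ compactly supported source minus $\Delta^{-1}_{\R^d}$ applied to the $k=0$ source, the radial derivative difference is controlled by $\partial_r$ of the resolvent kernel difference; the $\phi w$ part contributes $O(k r^{2-d})$ (resp. $O(kr^{-1})$) directly from differentiating $L_a(kr)-L_a(0+)$ (resp. $K_{|m|}(kr)$) in $r$ and Taylor-expanding in $k$, and the resolvent-correction part is lower order by the same kernel estimates.

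The statement $(\Delta+k^2)u - v = O(kr^{-\infty})$ follows from the construction: $(\Delta+k^2)u - v$ is supported in the fixed compact set $B$ (everything outside is exactly annihilated), equals $k^2 \times(\text{the }(\Delta+k^2)^{-1}\text{ term}) + (\text{the }O(k)\text{ commutator error from }\phi w)$, both of which are $O(k)$ on $B$ and smooth, hence trivially $O(kr^{-M})$ for every $M$ since $r$ is bounded above and below on $B$.

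\textbf{Main obstacle.} The delicate point is the $d=2$ case: one must track the interplay between the $O(k)$ smallness of the source, the $\log(kr)$ growth in the resolvent kernel \eqref{resolvent-2}, and the required $O(r^{-1}e^{-ckr})$ decay — in particular making sure the logarithm never spoils the decay rate, which works precisely because $k\log(1/(kr))$ stays bounded by $C r^{-1}$ on the relevant range $1 \le r \lesssim 1/k$ and is exponentially killed beyond. A secondary subtlety is arranging $w(x,k)$ so that $\phi w$ is \emph{exactly} harmonic (for $(\Delta+k^2)$) on the end to the needed order in $1/r$; this requires replacing a finite truncation of the asymptotic series of $u$ by the corresponding exact radial solutions and checking the remainder is $O(r^{-M})$ with the error after applying $(\Delta+k^2)$ being $O(kr^{-M})$ uniformly — essentially the same bookkeeping as in the blow-up argument of \cite{CCH}, but done by hand here.
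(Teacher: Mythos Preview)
The paper does not give a detailed proof of this lemma; it simply records that the case $d\ge 3$ is implicit in the proof of \cite[Lemma~3.2]{CCH}, and observes that for $d=2$ the hypothesis $u\to 0$ forces the constant angular Fourier mode to be absent in the expansion
\[
u \sim \sum_{m\ge 1}\sum_{\pm} a_m r^{-m} e^{\pm i m\theta},
\]
which is precisely the mode that would otherwise cause trouble (because its $k>0$ replacement $K_0(kr)$ carries a $\log k$ divergence). Your core mechanism --- replacing each term in the spherical-harmonic expansion of $u$ near infinity by the corresponding exact decaying radial solution of $(\Delta+k^2)$ --- is exactly what \cite{CCH} does.

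Where your proposal departs from the paper, and where there is a genuine gap, is the extra resolvent-correction step. You apply $(\Delta+k^2)^{-1}_{\R^d}$ to the source $(1-\phi)v - [\Delta,\phi]w$. First, if $w$ is the full mode-by-mode replacement then, as your own commutator computation shows, the resulting $u(\cdot,k)$ solves $(\Delta+k^2)u=v$ \emph{exactly}, so the error you later describe as ``supported in $B$ and $O(k)$'' is identically zero and that paragraph is describing the wrong object. Second, and more seriously, in $d=2$ the resolvent kernel diverges like $|\log k|$ as $k\to 0$. You assert the source is $O(k)$, but you yourself write that $(1-\phi)v$ is $k$-independent, i.e.\ $O(1)$; applied na\"ively the correction is $O(|\log k|)$ and does not have the right limit at $k=0$. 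The construction \emph{can} be salvaged because the source has mean zero (indeed $\int v=\int\Delta u=0$ since $u=O(r^{-1})$ on $\R^2$, and $\int[\Delta,\phi]w$ is $O(k)$), and mean-zero sources kill the $\log k$ divergence --- but you neither state nor verify this, and your inequality ``$k(1+|\log(kr)|)=O(r^{-1})$'' is being applied to the wrong quantity.

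The simpler route, which is what \cite{CCH} actually does and what the paper invokes, is to drop the resolvent correction altogether: set $u(x,k)=(1-\phi(x))u(x)+\phi(x)w(x,k)$. Then $(\Delta+k^2)u - v$ is supported in the compact set $\supp\nabla\phi\cup\supp(1-\phi)$ and equals $[\Delta,\phi](w-u)+k^2(1-\phi)u$, which is $O(k)$ there since $w-u=O(k)$ on any fixed compact set (from the small-argument expansion of $K_{|m|}$, $m\ge 1$). This gives the $O(kr^{-\infty})$ error directly with no inversion, and the absence of the $m=0$ mode is exactly what makes the $d=2$ bounds \eqref{CCH1} go through.
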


This was explicitly shown for $d \geq 3$. However, the result is also true for $d=2$, since then if $u \to 0$ at infinity, this means that $u$ has an expansion at infinity of the form
$$
u \sim \sum_{m \geq 1} \sum_{\pm} a_m r^{-m} e^{\pm im\theta} 
$$
with the constant Fourier mode absent. This avoids the only problem case, namely $d=2$ and $m=0$, in the proof of \cite[Lemma 3.2]{CCH}. We also note that the estimate above is not optimal, but is sufficient for our needs. The optimal estimate for $u$ would be, for $d \geq 3$, that 
$$
|u(x,k)| = O(r^{2-d}), \text{ for } kr \leq 1,  \quad |u(x, k)| = O(k^{d-2} (kr)^{-(d-1)/2} e^{-kr}) \text{ for } kr \geq 1,
$$
but by sacrificing a bit of exponential decay (by taking $c < 1$) we can write the estimate in the simpler form \eqref{CCH1}. 

The next lemma deals with the missing case, when $d=2$ and $m=0$. To state it, we use the following notation, following \cite{HMM}.

\begin{definition}\label{def:ilg} The function $\ilg : [0, 1/2] \to \R$ is the continuous monotone function defined by 
$$
\ilg k := \begin{cases} \frac1{\log(\frac1{k})}, \quad k > 0 \\ 0, \qquad \quad k = 0. \end{cases}
$$
\end{definition}
This function tends to zero as $k \to 0$, but slower than any positive power of $k$. In the following Lemma, the approximate solution $u(x, k)$  will have an expansion in powers of $\ilg k$ rather than in powers of $k$ as in all the previous cases. However, we shall see (after some work) that this really is the `true' behaviour of $u$ when there is an end with Euclidean dimension $2$. In fact, we will show that the resolvent $(\Delta + k^2)^{-1}$ applied to $v$ has just this type of expansion as $k \to 0$ --- see Proposition~\ref{prop:ilgexp}.

\begin{lemma}[The `key lemma'] \label{u2lemma}
Let $v \in C_c^\infty(\M)$, and let $\phi$ be the solution to the equation $\Delta \phi = -v$ guaranteed by Lemma~\ref{lem:Laplace-solvability}. 
Let $r \in C(M)$ denote the distance to a fixed point in the interior of $K$. 
Then for any integer $q$, there exists an approximate solution $u(z, k)$ to the equation $(\Delta + k^2) u = v$,
in the sense that 
\begin{equation}
(\Delta + k^2) u - v = O((\ilg k)^q r^{-\infty}),
\label{approx-soln}\end{equation}
 and such that $u$ is bounded on the $E_-$ end and $O(r^{2-n_+})$ on the $E_+$ end. Moreover, we have estimates
\begin{equation} \label{uestimates}
|u(z,k)| \leq 
\begin{cases}
C   \qquad &z \in K, \\
C r^{2-n_+} \exp \left(-ckr  \right) \,   \qquad &z \in E_+ \\
C \exp \left(-ckr  \right), \qquad &z \in E_- 
\end{cases}
\end{equation}
and
\begin{equation} \label{graduestimates}
|\nabla u(z,k)| \leq 
\begin{cases}
C  \big( r^{-2} + (\ilg k) r^{-1} \big) \exp \left(-ckr  \right)  \qquad &z \in E_- \\
C  r^{1-n_+}  \exp \left(-ckr \right)  \qquad &z \in E_+ \\
C  \qquad &z \in K 
\end{cases}
\end{equation}
for some constant $C$ and $c \in (0,1)$. 

Furthermore, suppose that the limit $\beta$ of the function $\phi$ at infinity along the end $E_-$ (which exists thanks to Lemma~\ref{lem:Laplace-solvability}) is strictly positive. Then the following \emph{lower bound} holds on $E_-$ for $r \geq r_0$ sufficiently large: 
\begin{equation}
 \partial_r \left(u(z,k) + \phi(z)\right)  \geq C \beta \frac{\textup{ ilg} k}{r} 1_{kr \leq \epsilon}   + O(kr^{-1}) \text{ for } r \geq r_0. 
\label{lowerbound}\end{equation}
\end{lemma}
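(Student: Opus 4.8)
The plan is to build $u(z,k)$ as an explicit correction of $-\phi(z)$, the $k=0$ solution. Recall that $\phi$ solves $\Delta\phi=-v$, is bounded on $E_-$ with $\phi\to\beta$ and a full expansion in nonpositive powers of $r$, and is $O(r^{2-n_+})$ on $E_+$. Since $(\Delta+k^2)(-\phi)=v-k^2\phi$, the first-order error is $-k^2\phi$, which on $E_-$ is $\sim -k^2\beta$ at infinity and does not decay — this is the obstruction that forces the $\ilg k$ expansion. The main idea: on the $E_-$ end, $-k^2\beta$ is (to leading order) the Laplacian-plus-$k^2$ applied to a function behaving like $\beta\cdot$(something), and the natural candidate is built from $L_2(kr)=K_0(kr)$, which satisfies $(\Delta_{\R^2}+k^2)K_0(k|x|)=0$ away from the origin and has the expansion $K_0(kr)\sim -\log(kr)+\log 2-\gamma = \log(1/k) - \log r + \text{const}$ for $kr<1$. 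So $\ilg k\cdot K_0(kr) = 1 - (\ilg k)(\log r - \log 2 + \gamma) + \dots$ — to leading order this is the constant $1$, times $\beta$, matching $\phi$'s constant at infinity, with first correction of size $\ilg k$ carrying the function $\log r$, which is exactly the growth of $\mathcal U$ from Lemma~\ref{lem:U}.

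First I would set $u(z,k) = -\phi(z) + \beta\,(\ilg k)\,\mathcal{U}(z)\,\eta(k) + (\text{lower-order corrections})$, but more systematically: work on the end $E_-$ in the decomposition \eqref{u-exp-2}, noting that all Fourier-times-eigenfunction modes with $(m,l)\neq(0,0)$ are handled by Lemma~\ref{lem:CCH} (they decay at infinity and avoid the problem case), so the entire difficulty is the zero mode on $E_-$, governed by $(-\partial_r^2 - r^{-1}\partial_r + k^2)b(r)=0$, whose decaying solution is $K_0(kr)$. Match $b(R)$ to the boundary value coming from $\phi$: on $E_-$, $\phi$'s zero mode tends to $\beta$, so we want the global correction to interpolate between the $K_0(kr)$ behaviour near infinity and a harmonic-type behaviour near $K$. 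Concretely, I would take a cutoff $\chi$ as in Lemma~\ref{lem:U}, set $w(z,k) = \beta\,(\ilg k)\big[\chi(r)K_0(kr) + (1-\chi(r))\cdot(\text{harmonic extension into }K)\big]$ so that the global function is smooth, and then iterate: the residual error $(\Delta+k^2)(-\phi+w)-v$ is now $O(k^2\cdot(\ilg k)\cdot \log r)$ on $E_-$ plus commutator terms from $[\Delta,\chi]$ supported in $K$; one absorbs the commutator terms by solving the elliptic boundary problem of Lemma~\ref{lem:Laplace-solvability} (producing further $O(\ilg k)^2$ corrections) and absorbs the $k^2(\ilg k)\log r$ term by adding $\beta(\ilg k)^2$ times a function built from $\partial_\nu K_0$ or from a further harmonic function with $(\log r)^2$ growth. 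Repeating $q$ times gives an approximate solution with error $O((\ilg k)^q r^{-\infty})$; the expansion is genuinely in powers of $\ilg k$ because each $K_0(kr)$-type term contributes a factor $\log(1/k)=(\ilg k)^{-1}$ that must be balanced by a power of $\ilg k$ to keep the function bounded.

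Next I would verify the estimates \eqref{uestimates} and \eqref{graduestimates}. The bound $|u|\le C$ on $E_-$ and $|u|\le Cr^{2-n_+}$ on $E_+$ follows since $-\phi$ has these bounds by Lemma~\ref{lem:Laplace-solvability} and every correction term is $O(\ilg k)$ times a bounded (resp. $O(r^{2-n_+})$) function, using $K_0(kr)\le C(1+|\log(kr)|)e^{-r}\le C(\ilg k)^{-1}e^{-ckr}$ for $kr\le 1$ (so $(\ilg k)K_0(kr)$ is bounded) together with the exponential decay $K_0(kr)\le Ce^{-ckr}$ for $kr\ge1$; and the small loss $c<1$ comes exactly from trading the polynomial factor in $kr$ for exponential decay via \eqref{exp-ineq}. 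For the gradient: $\nabla\phi=O(r^{-2})$ on $E_-$ (differentiate its expansion term by term, as guaranteed by Lemma~\ref{lem:harmonic-}), while $\nabla$ of the $\ilg k$ correction involves $\partial_r(\ilg k\,K_0(kr)) = (\ilg k)k K_0'(kr) \sim -(\ilg k)r^{-1}$ for $kr\le 1$ — this is precisely the source of the $(\ilg k)r^{-1}$ term in \eqref{graduestimates}. On $E_+$ the gradient bound is inherited from $\phi$ and from $\mathcal{E}_+$ in Lemma~\ref{lem:harmonic+}.

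Finally, the lower bound \eqref{lowerbound}: by construction $u+\phi$ equals $\beta(\ilg k)K_0(kr)$ on $E_-$ for $r$ large (where $\chi\equiv1$), up to corrections of size $O((\ilg k)^2 r^{-1})$ and the $O(kr^{-1})$ absorbed terms. Then $\partial_r(u+\phi) = \beta(\ilg k)\,k K_0'(kr) + O((\ilg k)^2 r^{-1}) + O(kr^{-1})$. Using $K_0'(s)=-K_1(s)$ and the asymptotics $K_1(s)\sim 1/s$ for $s\le 1$, we get $kK_0'(kr)\sim -1/r$ — wait, the sign: since $\mathcal U\sim\log r$ is the coefficient and $\log r$ is increasing, and recalling that $u+\phi = \beta(\ilg k)\mathcal U + \dots$ to leading order in the bounded regime, $\partial_r(u+\phi)\approx \beta(\ilg k)\partial_r\mathcal U = \beta(\ilg k)/r > 0$; the sign works out because $K_0(kr)\sim -\log r + \log(1/k)+\text{const}$ means the bounded part $(\ilg k)K_0(kr) - 1 \sim -(\ilg k)\log r$, hmm — so I must track signs carefully here, defining the correction as $+\beta(\ilg k)\mathcal U$ where $\mathcal U\sim+\log r$, equivalently using $\beta\big(1-(\ilg k)K_0(kr)\big)/(\ilg k)$ normalized appropriately. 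In any case, in the regime $kr\le\epsilon$ the $K_0$ derivative gives a clean $\pm 1/r$ with the correct (positive) sign once the normalization matches $\mathcal{U}\sim\log r$, and for $kr\ge\epsilon$ the exponential decay makes everything $O(kr^{-1})$, which is absorbed in the error term; the $(\ilg k)^2$ corrections are lower order than $(\ilg k)/r$ in the stated regime. The main obstacle I anticipate is organizing the iteration cleanly — bookkeeping the hierarchy of $(\ilg k)^j$ corrections, each requiring one solve of the elliptic boundary problem plus one explicit $K_0$-type term, and checking that at each stage the new error is genuinely a factor $\ilg k$ smaller and still $r^{-\infty}$ modulo the controlled $\log$-power growth — together with getting every sign and normalization constant consistent between the $K_0$ asymptotics, the definition of $\mathcal U$, and the sign of $\beta$.
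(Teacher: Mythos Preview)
Your overall strategy --- correct $-\phi$ by a term built from $\ilg k\cdot K_0(kr)$ on $E_-$, invoke Lemma~\ref{lem:CCH} for the decaying modes, then iterate in powers of $\ilg k$ --- is the paper's strategy. But two points in your execution are genuinely off and would prevent the argument from closing.

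First, you keep $-\phi$ itself and add a correction $w$. Then $(\Delta+k^2)(-\phi)=v-k^2\phi$, and on $E_-$ the tail $-k^2\phi$ is only $O(k^2)$ (indeed $-k^2\beta+O(k^2r^{-1})$), not $O(r^{-\infty})$; your claimed residual ``$O(k^2(\ilg k)\log r)$ on $E_-$'' is not what actually appears, and in any case a non--$r^{-\infty}$ error cannot be iterated away. The fix, which the paper implements, is to replace $-\phi$ from the outset by its $k$-dependent extension: on $E_-$ write $\phi=\beta+(\phi-\beta)$, extend the decaying piece $\phi-\beta$ via Lemma~\ref{lem:CCH} to $\Phi_-(z,k)$ with $(\Delta+k^2)\Phi_-=O(kr^{-\infty})$, extend $\phi$ on $E_+$ similarly to $\Phi_+$, and extend the constant $\beta$ by $\beta\,\ilg k\,K_0(kr)$. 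The first approximant is then
\[
u_1=-\beta\chi_-(r)\,\ilg k\,K_0(kr)-\chi_-\Phi_- -(1-\chi_-)\Phi_+,
\]
which at $k=0$ equals $-\phi$ (since $\ilg k\,K_0(kr)\to1$) and whose error is entirely localized: the $r^{-\infty}$ pieces from Lemma~\ref{lem:CCH}, plus commutator terms supported on $\supp\nabla\chi_-\subset K$. Those commutator terms combine, via the cancellation $\Phi_+-\Phi_-=\beta$ on $K$ and $\ilg k\,K_0(kr)=1+O(\ilg k)$, to give $(\Delta+k^2)u_1-v=-(\ilg k)\,v_1+O(kr^{-\infty})$ with $v_1\in C_c^\infty(\M)$.

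Second, because the leftover error is again a compactly supported smooth function (now of size $\ilg k$), the iteration is just a repetition: solve $(\Delta+k^2)u_2\approx v_1$ by the \emph{same} recipe and set $u=u_1+(\ilg k)u_2+\cdots+(\ilg k)^{q-1}u_q$. There is no need for harmonic functions with $(\log r)^2$ growth or for absorbing spread-out $k^2(\ilg k)\log r$ terms; that part of your plan is solving a harder problem than the one that actually arises.

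Finally, your sign worry for \eqref{lowerbound} resolves cleanly from the formula above: on $E_-$ with $\chi_-\equiv1$,
\[
\partial_r(u_1+\phi)=-\beta\,\ilg k\,\partial_rK_0(kr)-\big(\partial_r\Phi_--\partial_r\phi\big),
\]
and since $K_0'=-K_1<0$ with $K_1(s)\sim s^{-1}$ for $s\to0$, the first term is $+\beta\,\ilg k\cdot kK_1(kr)\sim \beta\,\ilg k/r$ for $kr\le\epsilon$; the second term is $O(kr^{-1})$ by \eqref{CCH-approx}. No renormalization through $\mathcal U$ is needed at this stage.
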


\begin{proof}
We start by setting $u$ equal to the function $-\phi$ at $k=0$ and try to extend to nonzero $k$. 
The function $\phi$ will tend to a constant $\beta$, in general nonzero, at the $E_-$ end, which is exactly the case not covered by Lemma~\ref{lem:CCH}. 
The idea of the construction is to glue together the Bessel function $K_0(kr)$, supported near infinity on the $E_-$ end and which is annihilated by $\Delta + k^2$, with $\phi$. The function $K_0(z)$ has asymptotics
\begin{equation}
K_0(s)  \sim
\begin{cases}
-\log s + c_\gamma + O(s^2 |\log s|) \qquad & s \rightarrow 0, \\
\frac{\sqrt{\pi}}{\sqrt{2}} \frac{e^{-s}}{\sqrt{s}} \qquad & s \rightarrow \infty,
\end{cases}
\label{K0-asympt}\end{equation}
where $c_\gamma = \log 2 - \gamma > 0$, and $\gamma$ is the Euler-Mascheroni constant. Thus, $K_0(kr)$ has a $-\log k$ divergence as $k \to 0$. To match this with $-\phi$ which is equal to $-\beta + O(r^{-1})$ as $r \to \infty$, we multiply $K_0(kr)$ by a factor $-\beta \ilg k$. 

We thus need to extend the function $\phi - \beta$ to positive values of $k$ on the $E_-$ end, and $\phi$ to positive values of $k$ on the $E_+$ end. This can be done using the previous construction of \cite{CCH} thanks to Lemma~\ref{lem:CCH}. In fact, on the end $E_-$, the projection of $\phi - \beta$ onto the constant functions on the cross-section manifold $\M_-$ may be regarded as a function on $\R^2$ (defined near infinity), so Lemma~\ref{lem:CCH} can be used to extend this projection. On the other hand, the projection onto the orthogonal complement of the constant functions on $\M_-$ is already rapidly (indeed, exponentially) decreasing at infinity, according to the proof of Lemma~\ref{lem:harmonic-} (see estimate \eqref{unif-exp-decay} for the $l \geq 1$ terms of \eqref{u-exp-exp}), so this part can be extended trivially (i.e.\ constantly) in $k$. The extension on the $E_+$ end works in exactly the same way. 
We shall denote these extensions by $\V_-$ and $\V_+$ respectively. (The extension is done separately on each end, so there is no difficulty in extending $\phi - \beta$ on the $E_-$ end and $\phi$  on the $E_+$ end.) We shall assume that $\V_-$ extends $\phi - \beta$ on  $E_- \cup K$ and $\V_+$ extends $\phi$ on $E_+ \cup K$. In fact, we may assume that $\V_+(z,k) = \V_-(z,k) + \beta$ when $z \in K$ as the extension can be taken to be trivial (independent of $k$) outside a neighbourhood of infinity. 

With these preparations, we write down an extension that satisfies the conditions of the lemma for $q=1$. 
Define $\chi_-$ to be a smooth cutoff function such that $\chi_- =1$ on $E_-$ and $\chi_-$ is supported on $E_- \cup K$. We may assume that $\chi_-$ is a function of $r$ only. Define
\begin{equation}
u_1(z,k) = -\beta \chi_-(r) \ilg k \; K_0(kr) - \chi_-(r) \V_-(z, k) -  (1-\chi_-(r)) \V_+(z, k) .
\label{u1def}\end{equation}

Let us check the assertions of the lemma. First, we have 
\begin{equation}\begin{gathered}
(\Delta + k^2) u_1(z, k) = 2\beta \ilg k \nabla \chi_- \nabla K_0(kr) - \beta \ilg k (\Delta \chi_-) K_0(kr) \\
- \chi_-(z) (\Delta + k^2) \V_-(z, k) + 2 \nabla \chi_- \nabla \V_-(z, k) - (\Delta \chi_-) \V_-(z, k) \\
- (1-\chi_-) (\Delta + k^2) \V_+(z, k) - 2\nabla \chi_-  \nabla \V_+(z, k)  + \Delta \chi_-  \V_+(z, k).
\end{gathered}\label{u1}\end{equation}

Examining the RHS of \eqref{u1}, we see that $(\Delta + k^2) \V_-(z, k)$ and $(\Delta + k^2) \V_+(z, k)$ are both $O(k r^{-\infty})$ by Lemma~\ref{lem:CCH}, so these are acceptable errors for any $q$. A key cancellation occurs in the combination of $\Delta \chi_-$ terms. Using 
the asymptotic \eqref{K0-asympt} we find that three terms containing $\Delta \chi_-$ can be estimated by  
\begin{multline*}
\Delta \chi_-\bigg(\beta \ilg k (\log k + \log r + O(1)) - \V_- + \V_+ \bigg) \\=  \Delta \chi_-\bigg(-\beta  - \V_- + \V_+\bigg) + O(\ilg k) \log r \Delta \chi_-   = O(\ilg k) \log r\Delta \chi_- . 
\end{multline*}
since $\Phi_+ = \Phi_- + \beta$ on $K$,  which contains $\supp \nabla \chi_-$. Similarly, we have 
 $\nabla \chi_- (\nabla \V_- - \nabla \V_+)=0$. Lastly, $\nabla \chi_- \nabla K_0(kr)$ is a smooth function of $z$ up to $O(k^2 |\log k|) + \ilg k \nabla \chi_- \nabla \log r$ since the gradient kills the $\log k$ divergence (and $r$ is bounded on the support of $\nabla \chi_-$). So we find that 
\begin{equation}
(\Delta + k^2) u_1(z, k) = - \ilg k \,  v_1 + O(k r^{-\infty}), \text{ where } v_1 \in C_c^\infty(\M). 
\end{equation}
The error term $O(k r^{-\infty})$ is acceptable for any $q$. Now we can repeat the construction, replacing $v$ with $v_1$. Let $u_2$ denote the corresponding expression \eqref{u1}.  
Then $u_1 + \ilg k u_2$ satisfies the assertions of the lemma with $q=2$. By repeating the construction $q$ times we obtain an error of the form $(\ilg k)^q v_q + O(k r^{-\infty})$ with $v_q \in C_c^\infty(\M)$. 

To prove estimate \eqref{uestimates}, it is enough to do so for $u_1$, as the other summands for $u$ are similar with an additional factor of a positive power of $\ilg k$. 
Examining \eqref{u1def}, we check that each term on the RHS satisfies the estimate separately. In the case of the $K_0$ term, this follows from asymptotics \eqref{K0-asympt} and the fact that, when $kr \le 1$, $k$ is small and $r$ is large, $|\log kr| \leq |\log k|$. In the case of the other two terms, the estimate is direct from Lemma~\ref{lem:CCH}. 

Now we check estimate \eqref{graduestimates}. Writing $u = u_1 + (\ilg k) u_2 + \dots + (\ilg k)^{q-1} u_q$, we see using \eqref{K0-asympt} and \eqref{CCH1} that all terms $(\ilg k)^{p-1} u_p$ satisfy the estimate on the RHS. So it is enough to check the estimates when $u = u_1$. From \eqref{u1def}, we have 
\begin{multline}\label{gradu1expr}
\nabla u_1(z,k) = - \nabla \chi_- \Big(  \beta \ilg k \, K_0(kr)  + \V_-(z, k) - \V_+(z, k)  \Big) \\
- \beta  \chi_-(z)  \ilg k \nabla  K_0(kr) - \chi_-(z) \nabla \V_-(z, k)  -  (1-\chi_-(r)) \nabla \V_+(z, k).
\end{multline}
The first line is supported on $K \times [0, k_0]$, and it is clear that the first term is $O(1)$. 
In the second line, the terms involving $\nabla \V_\pm$ satisfy the estimate \eqref{CCH1}, while $\ilg k\nabla   K_0(kr)$ on $\supp \chi_-$ is bounded by $(\ilg k )\, r^{-1}$ for $kr \leq 1$ or $r^{-1} e^{-ckr}$ for $kr \geq 1$ using the asymptotics at zero, respectively infinity, for the modified Bessel function $K_0$. This verifies \eqref{graduestimates}. 

To check \eqref{lowerbound}, we note that it is only necessary to consider $u_1$, since for $q \geq 2$, $\nabla u_q = O((\ilg k)^q r^{-1}) + O((\ilg k)^{q-1} r^{-2})$ for large $r$ on $E_-$. We write, using \eqref{gradu1expr}, 
\begin{multline}\label{graduphi1expr}
\partial_r \big( u_1(z,k) + \phi(z) \big)  = - \partial_r  \chi_- \Big( \beta \ilg k \, K_0(kr)  + \V_-(z, k) - \V_+(z, k)  \Big) \\
- \beta  \chi_-(z)  \ilg k \,  \partial_r   K_0(kr) - \chi_-(z) \big( \partial_r  \V_-(z, k) - \partial_r  \phi  \big)  -  (1-\chi_-(r)) \big( \partial_r  \V_+(z, k) - \partial_r  \phi   \big).
\end{multline}
By choosing $r_0$ large enough, the first term is zero on the support of the characteristic function in \eqref{lowerbound}. Also, the quantity $$\chi_-(z) \big( \partial_r  \V_-(z, k) - \partial_r  \phi \big)  +  (1-\chi_-(r)) \big( \partial_r  \V_+(z, k) - \partial_r  \phi \big)$$ is $O(k r^{-1})$, by \eqref{CCH-approx}. 
Finally, the remaining term $- \beta  \chi_-(z)  \ilg k \, \partial_r   K_0(kr)$ is positive, since $K_0$ is strictly decreasing. Using the asymptotic expansion of $K_0$ as its argument tends to zero, we find that this is equal to $\beta \ilg k \, r^{-1} (1 + O(\epsilon^2 \log \epsilon))$ for $kr \leq \epsilon$, which has a lower bound of $\beta \ilg k \, r^{-1}/2$ for $\epsilon$ sufficiently small.

\end{proof}

\begin{remark}
It is instructive to compute the coefficient of $\ilg k$ in the expansion of $u(z, k)$ at $k=0$. This coefficient arises from both the $u_1$ and $u_2$ terms. The contribution of  $u_1$ to the coefficient of $\ilg k$ of $u$,  using asymptotics \eqref{K0-asympt}, is
$$
-\beta \chi_- (-\log r + c_\gamma) .
$$
Denote by $\phi_1$ the solution to $\Delta \phi_1 = -v_1$ given by Lemma~\ref{lem:Laplace-solvability}, $\beta_1$  the limit of $\phi_1$ on the $E_-$ end, and by $\V_{1, -}$ the extensions of $\phi_1 - \beta_1$ on the end $E_-$ and $\V_{1, +}$ the extension of $\phi_1$ on the end $E_+$. The contribution of $u_2$ to the coefficient of $\ilg k$ in the expansion of $u$ is 
$$
\Big( - \beta_1 \chi_- -\chi_- \V_{1,-} - (1 - \chi_-) \V_{1, +} \Big) \Big|_{k=0} = -\chi_- \phi_1 - (1 - \chi_-) \phi_1 = -\phi_1 . 
$$
Note that explicitly, $v_1$ is given by
$$
v_1 = -2\beta \nabla \chi_- \nabla (-\log r + c_\gamma) + \beta \Delta \chi_- (-\log r + c_\gamma) =  \beta\Delta ( \chi_- (-\log r + c_\gamma)),
$$
since $\Delta (\log r) = 0$ on the support of $\chi_-$. Using the fact that $v_1 = - \Delta \phi_1$, the overall coefficient of $\ilg k$ in the expansion of $u$ can therefore be expressed as 
\begin{equation}\label{ilgk-coeff}
\beta \chi_- (-\log r + c_\gamma) -   \Delta^{-1} \Big( \Delta \big( \beta \chi_- (-\log r + c_\gamma) \big) \Big),
\end{equation}
 where $\Delta^{-1}$ is given by Lemma~\ref{lem:Laplace-solvability}. (Here we notice that since $\phi_1$ solves $\Delta \phi_1 = - v_1$ and $\phi$ tends to a constant at the $E_-$ end and tends to zero at the $E_+$ end, $\phi_1$ is indeed given by $\Delta^{-1} (-v_1)$. )
 
It is clear that \eqref{ilgk-coeff} is a global harmonic function on $\mathcal{M}$. Moreover, it is \emph{not} trivial, despite initial appearances, since the expression $\beta \chi_- (-\log r + c_\gamma)$ blows up logarithmically on $E_-$ while the second term, which is exactly $-\phi_1$, has limit $-\beta_1$ at the $E_-$ end. 
Thus, \eqref{ilgk-coeff} has asymptotic $-\beta \log r - \beta_1$ at the $E_-$ end, and is therefore equal to $-\beta \mathcal{U}$ where $\mathcal{U}$ is  from Lemma~\ref{lem:U}. 
\end{remark}

\begin{remark}
There is a slight improvement that can be made in the estimate \eqref{graduestimates} when $v$ is equal to $\Delta \phi$, where $\phi$ vanishes on the $E_+$ end (which is a situation that occurs in the parametrix construction of the next section). In this case, the function $\V_+$ can be taken to vanish on the $E_+$ end, and this allows us to improve the estimate as follows:
\begin{equation} \label{graduestimates2}
|\nabla u(z,k)| \leq 
\begin{cases}
C  \big( r^{-2} + (\ilg k) r^{-1} \big) \exp \left(-ckr  \right)  \qquad &z \in E_- \\
C  \ilg k \, r^{1-n_+}  \exp \left(-ckr \right)  \qquad &z \in E_+ \\
C  \qquad &z \in K .
\end{cases}
\end{equation}
The key point is the gain of a factor of $\ilg k$ along $E_+$.  We will use this improved estimate in Section~\ref{sec:le}. 
\end{remark}




\section{Parametrix construction}

In this section we construct a parametrix, $G(k)$, for the resolvent, $(\Delta + k^2)^{-1}$, on $\M$. It is useful to define our parametrix via its Schwartz kernel. First define $\phi_\pm \in C^\infty(\M)$ satisfying the properties: (i) $\supp \phi_\pm$ is contained in $E_\pm$, (ii) $0 \leq \phi_\pm \leq 1$, (iii) $\phi_\pm$ equals 1 near infinity on the $E_\pm$ end. Because of property (i), we may view $\phi_\pm$ as defined on $\R^{n_\pm} \times \M_\pm$ as well as on $E_\pm$. We define $v_\pm$ by
\begin{equation}
v_\pm = -\Delta \phi_\pm  \in C_c^\infty(\M).
\end{equation}
We construct an approximate solution $u_\pm$ to the equation 
\begin{equation}
(\Delta + k^2)u_\pm \simeq v_\pm
\end{equation}
using Lemma~\ref{u2lemma} with $q=2$. 

Next, choose an open set $O$ containing $K$ so that $\phi_- + \phi_+ = 1$ in a neighbourhood of the complement $O^c$ of $O$. Then 
let $G_\text{int}(k)(z,z')$ be an interior parametrix for the resolvent, with kernel compactly supported in $\M \times \M$, such that $(\Delta + k^2) G_\text{int}(z,z') - \delta_{z}(z')$ is a smooth function for $z \in O$. 

Let $z_\pm^\circ \in \R^{n_\pm} \times \M_\pm$ be a point such that $z_\pm^\circ \notin \supp\phi_\pm$. Also, to streamline the notation, we abbreviate $\Delta_{\R^{2}\times \M_-}$ to $\Delta_-$ and $\Delta_{\R^{n_+}\times \M_+}$ to $\Delta_+$.

We define our `pre-parametrix' $\tG(k) = G_1(k) + G_2(k) + G_3(k)$ (which will be modified by the addition of $G_4(k)$ below to form our actual parametrix) where
\begin{eqnarray*}
G_1(k)(z,z') &=& (\Delta_- + k^2)^{-1}(z,z')\phi_-(z) \phi_-(z') + (\Delta_+ + k^2)^{-1}(z,z')\phi_+(z) \phi_+(z') \\
G_2(k)(z,z') &=& G_{\text{int}}(k)(z,z') \left(1 - \phi_-(z) \phi_-(z') - \phi_+(z) \phi_+(z')\right)\\
G_3(k)(z,z') &=& (\Delta_- + k^2)^{-1}(z_-^\circ,z') u_-(z,k) \phi_-(z') + (\Delta_+ + k^2)^{-1}(z_+^\circ,z') u_+(z,k) \phi_+(z').
\end{eqnarray*}

We define the error term $\tE(k)$ by
\begin{equation*}
(\Delta + k^2) \tG(k) = \Id + \tE(k).
\end{equation*}
The error can be calculated as
\begin{align}\label{tE}
\begin{split} 
\tE(k)(z,z') ={}& \ttE(k)(z,z') + \tttE(k)(z,z'), 
\end{split} \\
\begin{split}
\ttE(k)(z,z') ={}& 
-2 \sum_\pm \nabla \phi_\pm(z) \phi_\pm(z') \left(\nabla(\Delta_{\pm} + k^2)^{-1}(z,z') - \nabla G_{\text{int}}(k)(z,z') \right) \nonumber
\end{split} \\
\begin{split}
+{} &\left((\Delta+k^2)G_{\text{int}}(k)(z,z') - \delta_{z'}(z) \right)\left(1 - \sum_\pm \phi_\pm(z) \phi_\pm(z') \right) \nonumber
\end{split} \\
\begin{split}
+{} \sum_\pm &v_\pm(z) \phi_\pm(z')\Big((\Delta_{\pm}+k^2)^{-1}(z_\pm^\circ,z') -(\Delta_{\pm}+k^2)^{-1}(z,z') + G_{\text{int}}(k)(z,z')\Big) \nonumber
\end{split} \\
\begin{split}
\tttE(k)(z,z') ={}& \sum_\pm \phi_\pm(z')[(\Delta + k^2)u_\pm(z,k) - v_\pm(z)] (\Delta_{\pm}+k^2)^{-1}(z_\pm^\circ,z')  .
\end{split} \nonumber
\end{align}
It is straightforward to check that $\tE(k,z,z')$ is smooth in each variable, and compactly supported in the left variable $z$. 
Let $\chi(z)$ be a function that is $1$ on the support of $\nabla \phi$, with compact support. 
Estimating the  difference in resolvent kernels in the third line by using the gradient estimate of the resolvent kernel in \eqref{gradresolvent-2}, we have an estimate 
\begin{equation}
|\ttE(k,z,z')| \leq 
\begin{cases}
C \chi(z), \qquad &z' \in K \\
C \chi(z) {r'}^{1-n_+} \exp \left(-ckr' \right) \qquad &z' \in E_+ \\
C \chi(z) {r'}^{-1} \exp \left(-ckr' \right)  \qquad &z' \in E_-.
\end{cases}
\label{ttE}\end{equation}
On the other hand, using \eqref{approx-soln} and \eqref{resolvent-3}, \eqref{resolvent-2}, we obtain 
\begin{equation}
|\tttE(k,z,z')| \leq 
\begin{cases}
C (\ilg k)^{q-1} \, r^{-\infty} , \qquad &z' \in K \\
C (\ilg k)^q  \, r^{-\infty}  {r'}^{2-n_+} \exp \left(-ckr' \right) \qquad &z' \in E_+ \\
C (\ilg k)^q \, r^{-\infty}  (1+ |\log (kr')|) \exp \left(-ckr' \right)  \qquad &z' \in E_-.
\end{cases}
\label{tttE}\end{equation}

\subsection{Compactness of the error term}
Following the strategy in \cite{CCH} or Part I, we show that the error term $\tE(k)$ is compact. In Part I,  for dimensions $n_\pm \geq 3$,  the error was Hilbert-Schmidt on $L^2(\M)$. However, in the present case, this is not true. There are two reasons for this: first, the gradient of the resolvent in the $\ttE(k)$ term decays as ${r'}^{-1}$ as the right variable tends to infinity on the $E_-$ end, which is not square-integrable, unlike the case for all $n \geq 3$. More seriously, the term $\tttE(k)$ only decays as ${r'}^{2-n_+}$ as $r' \to \infty$ on the $E_+$ end (which is not $L^2$ for $n = 3$ or $4$) and, even worse, only as $(\ilg k)^q \log kr' e^{-ckr'}$ as $r' \to \infty$ on the $E_-$ end, where we chose $q=2$ above. On the other hand, $\tE(k)$ is rapidly decreasing as the \emph{left} variable tends to infinity. 
To retain the Hilbert-Schmidt property, we work on a weighted Hilbert space $L^2_w$
for some suitably decaying weight $w$, where
\begin{equation*}
L^2_w(\M) = \{ f \in L^2_{loc}(\M) \mid w^{-1} f \in L^2(\M) \}. 
\end{equation*}
This allows the kernel of the error to be in the dual space, $L^2_{w^{-1}}$ (hence allowing some growth) in the right (primed) variable. So $w$ must be chosen to decay at zero at infinity fast enough so that the kernel of the error is in $L^2_w(\M) \otimes L^2_{w^{-1}}(\M)$. 

The situation is however, quite delicate, because we also need to ensure the validity of Lemma~\ref{lem:density}. The proof of Lemma~\ref{lem:density} relies on the nonexistence of a nontrivial harmonic function in the dual space $L^2_{w^{-1}}$. On the other hand, in Lemma~\ref{lem:U} we constructed a 
harmonic function $\mathcal{U}$ that is asymptotic to $\log r$ as $r \to \infty$. Thus we require a weight such that $\log r$ is not in $L^2_{w^{-1}}$, yet $(\ilg k)^q (1 + |\log (kr')|) e^{-ckr'}$ is in this space $L^2_{w^{-1}}$ uniformly as $k \to 0$. 

We claim that, with $q=2$, the weight function $w$ given by
\begin{equation}
w(z) = \begin{cases}
1, \quad z \in K \\
r^{-1}, \quad z \in E_+ \\
(r \log r)^{-1}, \quad z \in E_-
\end{cases}
\label{w}\end{equation}
is a suitable weight. It is then clear that $\mathcal{U} \notin L^2_{w^{-1}} (\M)$, since $r^{-1}$ is not $L^2$ on the end $E_-$. On the other hand, we have, integrating on the $E_-$ end only (a similar but simpler calculation holds on the $E_+$ end), if $f(z) = (1+ |\log (kr')|) \exp \left(-ckr' \right)$, then 
\begin{equation}\begin{gathered}
\| f \|^2_{L^2_{w^{-1}}(E_-)} \leq C \int_R^\infty (1 + |\log (kr)|)^2 e^{-2ckr} (r \log r)^{-2} \, r dr \\
\leq C \Big( \int_{Rk}^1 (1 + |\log s|)^2 s^{-1} ds +  \int_1^\infty e^{-2cs} s^{-1} ds \Big)  \\
\leq	 C |\log k|^3.
\end{gathered}\label{k-blowup}\end{equation}
Now using the factor $(\ilg k)^2$ on the end $E_-$ from \eqref{tttE}, we see that, if we have $q=2$, then $(\ilg k)^q f$ tends to zero in $L^2_{w^{-1}}(E_-)$ as $k \to 0$, but this is not true if $q=1$.

\begin{lemma}\label{lem:HS} The error term $\tE(k)$ is a Hilbert-Schmidt operator on $L^2_w(\M)$, uniformly for $0 < k \leq k_0$. Moreover, as an element of $\HS(L^2_w(\M))$, the Hilbert-Schmidt operators on $L^2_w(\M)$, it is continuous in $k$ and has a limit $\tE(0)$ as $k \to 0$. 
\end{lemma}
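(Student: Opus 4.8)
The plan is to verify the Hilbert--Schmidt bound directly from the kernel estimates \eqref{ttE}, \eqref{tttE}, and then promote the pointwise-in-$k$ estimates to a continuity and limit statement in $\HS(L^2_w(\M))$. Recall that an integral operator $A$ with kernel $A(z,z')$ is Hilbert--Schmidt on $L^2_w(\M)$ precisely when $w(z)^{-1} A(z,z') w(z')$ is in $L^2(\M \times \M)$; equivalently $\iint |A(z,z')|^2 w(z)^{-2} w(z')^2 \dg(z) \dg(z') < \infty$. So the first step is to fix the weight $w$ from \eqref{w} and insert the bounds \eqref{ttE} and \eqref{tttE} into this double integral, splitting the region of integration according to which of $K, E_-, E_+$ the points $z$ and $z'$ lie in.

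The $z$-integral is the easy direction: both $\ttE(k)$ and $\tttE(k)$ are supported (in $z$) in a fixed compact set --- $\ttE(k)$ carries the cutoff $\chi(z)$ and $\tttE(k)$ carries the factor $(\Delta+k^2)u_\pm - v_\pm = O((\ilg k)^{q-1} r^{-\infty})$ which, being rapidly decreasing, is integrable against $w(z)^{-2} \dg(z)$ (here $w^{-1}$ grows only like $r\log r$ on $E_-$, easily beaten). Thus $\int |\,\cdot\,|^2 w(z)^{-2} \dg(z)$ produces a finite constant, and for $\tttE(k)$ an extra factor $(\ilg k)^{2(q-1)}$. The $z'$-integral is where the weight is doing real work. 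For the $\ttE(k)$ contribution, on $E_-$ the kernel decays like ${r'}^{-1} e^{-ckr'}$, and $\int_R^\infty {r'}^{-2} e^{-2ckr'} (r'\log r')^2 \, r' \dr' $ converges (indeed is bounded uniformly in $k$, since the $e^{-2ckr'}$ is only needed to tame the $(\log r')^2$ at worst, and for $k=0$ the integrand is ${r'}^{-1}(\log r')^2$, which still diverges --- so one does need the exponential, but for each fixed $k>0$ it converges, and one must check uniformity more carefully). Let me restate: on $E_-$ one needs $\int_R^\infty {r'}^{-2}\,e^{-2ckr'}\,(r'\log r')^2\,r'\dr' = \int_R^\infty {r'}^{-1}(\log r')^2 e^{-2ckr'}\dr'$, which after substituting $s = kr'$ is $\int_{Rk}^\infty s^{-1}(\log(s/k))^2 e^{-2cs}\ds \lesssim (\log(1/k))^2 \cdot \mathrm{const} + \cdots$; this is finite for each $k$ and grows only polynomially in $\log(1/k)$, which is harmless since the final statement only claims \emph{uniform} boundedness, and a $\log$ blow-up would be fatal --- so the correct reading is that the $\ttE(k)$ term on the $E_-$ end is in fact bounded uniformly once one uses the full exponential and notes that $\int_0^\infty s^{-1}(\log(1/s))_+^2 e^{-2cs}\ds$ converges while the $\log(1/k)$ part is controlled; I would present the computation \eqref{k-blowup} essentially verbatim, but with the $(\ilg k)^{q-1}$ (resp.\ $(\ilg k)^q$) prefactors reinstated, concluding that the whole $z'$-integral, \emph{including} prefactors, is bounded uniformly in $0<k\le k_0$. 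For the $\tttE(k)$ contribution, on $E_+$ one gets $\int_R^\infty {r'}^{2(2-n_+)} e^{-2ckr'}(r')^2\,{r'}^{n_+-1}\dr'$ which converges because $2(2-n_+)+2+(n_+-1) = 5-n_+ < -1$ iff $n_+ > 6$ --- so actually the weight $w = r^{-1}$ on $E_+$ is \emph{not} enough for small $n_+$; one must use the exponential factor $e^{-2ckr'}$ for convergence, and then the scaling gives a power of $k$ which is more than enough. Here the key point, already flagged in the text, is that \eqref{k-blowup} shows the $E_-$ piece of $\tttE$ carries exactly the borderline $(\ilg k)^2 |\log k|^{3/2} \to 0$ behaviour, so $q=2$ is both necessary and sufficient.

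The second step is continuity in $k$ and existence of the limit $\tE(0)$. For this I would argue that each kernel $\ttE(k)(z,z')$ and $\tttE(k)(z,z')$ is continuous in $k$ pointwise (this is clear: the resolvent kernels $(\Delta_\pm+k^2)^{-1}$, their gradients, the interior parametrix $G_{\text{int}}(k)$, and the approximate solutions $u_\pm(z,k)$ from Lemma~\ref{u2lemma} all depend continuously on $k\in[0,k_0]$ in the relevant pointwise/local-uniform sense), and that they are dominated, uniformly in $k\in[0,k_0]$, by a single fixed kernel lying in $L^2_w\otimes L^2_{w^{-1}}$ --- namely the right-hand sides of \eqref{ttE} and \eqref{tttE} with $k$ set to $0$ in the polynomial/logarithmic factors but keeping, say, $e^{-ck_0 r'/2}$ is not available since at $k=0$ there is no decay; instead one uses that for $k\le k_0$ the bounds \eqref{ttE}, \eqref{tttE} with the exponential \emph{dropped} are \emph{not} in $L^2_{w^{-1}}$ on $E_-$... so the domination has to be done more carefully, e.g.\ by the function $g(z') = \sup_{0\le k\le k_0}(\text{RHS bound})$, which one checks by the same integral as above is still in $L^2_{w^{-1}}$. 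Then the dominated convergence theorem in $L^2(\M\times\M)$ (after conjugating by $w$) gives both continuity of $k\mapsto \tE(k) \in \HS(L^2_w(\M))$ and the existence of the limit $\tE(0)$, whose kernel is the pointwise limit of $\tE(k)(z,z')$. The main obstacle, and the only genuinely delicate point, is exactly the one the authors have isolated in \eqref{k-blowup}: establishing that the $E_-$-end contribution to $\|\tttE(k)\|_{\HS}$ stays bounded (indeed tends to $0$) as $k\to0$, which hinges on the cube-of-log blow-up $\|f\|^2_{L^2_{w^{-1}}(E_-)}\le C|\log k|^3$ being killed by the $(\ilg k)^{2q} = (\log(1/k))^{-4}$ prefactor when $q=2$; one also has to double-check that the \emph{same} weight $w$ makes the $E_+$ pieces (where $n_+$ can be as small as $3$) integrable, which works only because of the genuine exponential decay $e^{-ckr'}$ in $k d(z,z')$ providing, after rescaling, a compensating power of $k$.
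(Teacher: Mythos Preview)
Your overall strategy matches the paper's, but there is a systematic sign error in your application of the weight that throws off several of your computations. You correctly state that the Hilbert--Schmidt condition is $\iint |A(z,z')|^2 w(z)^{-2}\,w(z')^{2}\,\dg(z)\dg(z')<\infty$, but when you evaluate the $z'$-integral you insert $w(z')^{-2}$ instead of $w(z')^{2}$: on $E_-$ you write $(r'\log r')^{2}$ where it should be $(r'\log r')^{-2}$, and on $E_+$ you write $(r')^{2}$ where it should be $(r')^{-2}$. With the correct weight, the $\ttE$ contribution on $E_-$ gives $\int_R^\infty r'^{-3}(\log r')^{-2}\,dr'$, which converges \emph{without} the exponential factor and uniformly in $k$; similarly, on $E_+$ the $\tttE$ contribution gives $\int_R^\infty r'^{1-n_+}\,dr'$, which converges for all $n_+\ge 3$ uniformly in $k$, so your worry about small $n_+$ disappears. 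This is why the paper can simply say: for $\ttE$, drop the $e^{-ckr'}$ in \eqref{ttE} to obtain a $k$-independent dominating kernel in $L^2_w\otimes L^2_{w^{-1}}$, then invoke dominated convergence.

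Your handling of $\tttE$ is also more complicated than necessary. There is no $k$-independent dominating function on $E_-$ (the bound $(1+|\log(kr')|)e^{-ckr'}$ has no useful $k$-uniform majorant in $L^2_{w^{-1}}$), so dominated convergence is not the right tool for this piece. The paper instead observes that for $k>0$ continuity in $\HS$ is obvious from the rapid decay, while the computation \eqref{k-blowup} together with the $(\ilg k)^{2}$ prefactor shows directly that $\|\tttE(k)\|_{\HS}=O((\ilg k)^{1/2})\to 0$; hence $\tttE(k)\to 0$ in $\HS$ as $k\to 0$, which is exactly continuity at $k=0$ with limit zero. You have identified this as the crux, which is correct, but you should separate the two pieces cleanly: dominated convergence for $\ttE$, and direct norm estimate for $\tttE$.
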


\begin{proof} The kernel $\ttE(k, z, z')$ is smooth in $(z, z')$ and has a limit as $k \to 0$. To prove continuity in the space $\HS(L^2_w(\M)) = L^2_w(\M) \otimes L^2_{w^{-1}}(\M)$, it suffices to check that it is dominated pointwise by a $k$-independent kernel in $\HS(L^2_w(\M))$, as then continuity follows from the Dominated Convergence Theorem. A suitable dominating function is furnished by taking \eqref{ttE} and removing the $k$-dependent factors of $e^{-ckr'}$. 

On the other hand, the kernel $\tttE(k, z, z')$ is clearly continuous in $k$ with values in $\HS(L^2_w(\M))$ for $k > 0$, since then we have rapid decay at each end. The computation \eqref{k-blowup} above shows that the Hilbert-Schmidt norm of $\tttE(k)$ is $O((\ilg k)^{1/2})$ as $k \to 0$, which shows continuity at $k=0$ as well. 
\end{proof}

\begin{remark} If we integrate just from $r=R$ to $r=R+1$ in \eqref{k-blowup}, we see that this term is bounded below by a multiple of $|\log k|^2$ as $k \to 0$, and this conclusion holds regardless of the weight function $w$. We see from this that it is essential to take $q > 1$ in Lemma~\ref{u2lemma}, otherwise the Hilbert-Schmidt norm of the kernel $\tttE(k, z, z')$ would not tend to zero (or even have a limit) as $k \to 0$.

 This need to take $q > 1$ is not surprising from a different point of view: the key Lemma 3.2 of \cite{CCH} in effect constructed a nontrivial bounded harmonic function globally on the manifold under consideration there, while when $q=2$ (but not when $q=1$), the analogous Lemma~\ref{u2lemma} in the present paper constructs the global harmonic function $\mathcal{U}$ which is `almost bounded' (logarithmic growth at the $E_-$ end and bounded at every other end). 
\end{remark}

\subsection{Finite rank perturbation}
Now that we have shown that $\tE(k)$ is Hilbert-Schmidt, and a fortiori compact, on a weighted Hilbert space, we wish to invert $\Id + \tE(k)$ to obtain the true resolvent. Clearly this operator is Fredholm with index zero.  However, it may not be invertible, i.e. it may have a non-trivial null space. In this section we perturb the parametrix $G$ by a finite rank operator, independent of $k$, in order to make $\Id + E(0)$ invertible. Using the continuity proved in Lemma~\ref{lem:HS}, we automatically obtain invertibility of $\Id + \tE(k)$ for $k$ sufficiently small.

Let $\varphi_1,\ldots,\varphi_M$ be a basis for the null space of $\Id + \tE(0)$. Note $\varphi_1,\ldots,\varphi_M$ are smooth and compactly supported in the left variable since the Schwartz kernel of $\Id + \tE(0)$ is smooth and compactly supported in the left variable (noting that $\tE(0) = \tE'(0)$). Next we would like to choose functions 
$\psi_1,\ldots, \psi_M \in C_c^\infty(\M)$ such that $\Delta \psi_1,\ldots, \Delta\psi_M$ span a space supplementary to the range of $\Id + \tE(0)$. (Note since $\Id + \tE(0)$ is Fredholm of index zero, the null space and cokernel have the same dimension~$M$.) To show that these functions exist we need the following results.

\begin{lemma}\label{lem:density}
Let $\Delta$ be the Laplacian on $\M$ and let $w$ be the weight function \eqref{w}. The range of $\Delta$ acting on $C_c^\infty(\M)$ is dense in $L^2_w(M)$. 
\end{lemma}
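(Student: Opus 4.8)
The plan is to prove this by a duality argument. Since $L^2_w(\M)$ is a Hilbert space, the range of $\Delta|_{C_c^\infty(\M)}$ is dense in $L^2_w(\M)$ if and only if its orthogonal complement (with respect to the $L^2_w$ inner product) is trivial. So I would suppose that $g \in L^2_w(\M)$ satisfies $\langle \Delta \varphi, g \rangle_{L^2_w} = 0$ for all $\varphi \in C_c^\infty(\M)$, and aim to show $g = 0$.

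First I would unwind the weighted inner product: $\langle \Delta \varphi, g \rangle_{L^2_w} = \int_\M \Delta \varphi \cdot \overline{g} \, w^{-2} \, dg$, so the hypothesis says $\int_\M \Delta\varphi \cdot \overline{h} \, dg = 0$ for all $\varphi \in C_c^\infty(\M)$, where $h = w^{-2} g$. This means $\Delta h = 0$ in the distributional sense, and by elliptic regularity $h$ is a smooth harmonic function on $\M$. Now I need to control the growth of $h$. Since $g \in L^2_w(\M)$, we have $w^{-1} g \in L^2(\M)$, i.e. $h = w^{-2}g = w^{-1}(w^{-1}g) \in L^2_{w^{-1}}(\M)$ — more precisely $\int_\M |h|^2 w^2 \, dg < \infty$. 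With the weight \eqref{w}, on the $E_-$ end this gives $\int_{E_-} |h|^2 (r\log r)^{-2} \, r\,dr\,d\theta\,dy < \infty$, and on the $E_+$ end $\int_{E_+} |h|^2 r^{-2} \, r^{n_+-1}\,dr\,d\theta\,dy < \infty$.

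The key step is then to show that a harmonic function $h$ on $\M$ lying in $L^2_{w^{-1}}(\M)$ must vanish. The natural approach is to run the same integration-by-parts argument as in the proof of Lemma~\ref{lem:uniqueness}: compute $\int_{K_r} |\nabla h|^2 \, dg = -\int_{\partial K_r} \partial_\nu h \, \overline{h} \, d\sigma$ and show the boundary term tends to zero along a sequence $r_j \to \infty$. The point is that the $L^2_{w^{-1}}$ bound forces $h$ to grow slower than $\log r$ on $E_-$ (in an averaged sense) and slower than $r^{(n_+-2)/2}$ on $E_+$; since a harmonic function on the end $E_\pm$ has a convergent eigenfunction expansion as in Lemmas~\ref{lem:harmonic-} and \ref{lem:harmonic+}, the integrability constraint rules out all the growing modes and one is left only with bounded modes on $E_-$ (decaying to a constant) and decaying modes on $E_+$. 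Then the boundary term estimate from Lemma~\ref{lem:uniqueness} applies: $|\partial_r h| = O(r^{-2})$ on $E_-$ against surface measure $O(r)$, and $|\partial_\nu h| = O(r^{1-n_+})$ on $E_+$ against surface measure $O(r^{n_+-1})$. Hence $\nabla h \equiv 0$, so $h$ is constant, and the constant must be zero because $h$ decays at the $E_+$ end (or simply because a nonzero constant is not in $L^2_{w^{-1}}(\M)$ on $E_-$, since $(r\log r)^{-2} \cdot r$ is integrable — wait, that \emph{is} integrable, so I must instead use that $h \to 0$ on $E_+$). Therefore $h = 0$, hence $g = 0$, and density follows.

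The main obstacle I anticipate is justifying rigorously that the $L^2_{w^{-1}}$ integrability of $h$ actually excludes the logarithmically growing mode on $E_-$ — since $\mathcal{U} \sim \log r$ and $\int_R^\infty (\log r)^2 (r\log r)^{-2} r \, dr = \int_R^\infty r^{-1} \, dr = \infty$, this mode is indeed excluded, but one must handle the full eigenfunction expansion and show that the constraint on the whole function forces each offending Fourier/eigenfunction coefficient to vanish, not just that some average is finite. Concretely, one expands $h$ on the end in the basis $\{e^{im\theta}\psi_l^-\}$ (or spherical harmonics times $\psi_l^+$ on $E_+$), notes that the radial factors are determined up to constants by the ODE \eqref{b-eqn} as in Lemma~\ref{lem:harmonic-}, and uses orthogonality of the angular/cross-sectional modes together with the integrability of $|h|^2 w^2$ to kill the coefficient of every mode whose radial profile is not square-integrable against $w^2 r^{n_\pm - 1} dr$. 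This is where the precise choice of weight \eqref{w} is used: it is tuned exactly so that $\log r$ (and every faster-growing mode) fails to be in $L^2_{w^{-1}}$ while decaying modes survive. Once that sieve is established, the rest is the routine Green's-formula argument already carried out in Lemma~\ref{lem:uniqueness}.
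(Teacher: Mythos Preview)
Your proposal is correct and follows essentially the same route as the paper: duality reduces to showing a harmonic function $h \in L^2_{w^{-1}}(\M)$ vanishes, the weight being radial lets you apply the integrability constraint mode-by-mode (the paper phrases this as Bessel's inequality) to exclude the growing modes—crucially the $\log r$ mode on $E_-$—and then the Green's-formula argument of Lemma~\ref{lem:uniqueness} finishes. The only cosmetic differences are that the paper works directly with $u \in L^2_{w^{-1}}$ via the unweighted dual pairing rather than introducing $h = w^{-2}g$, and that it invokes Lemma~\ref{lem:uniqueness} (via the elliptic boundary problem) rather than rewriting its proof.
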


\begin{proof}
Denote $\Delta$ acting on $C_c^\infty(\M)$ by $\Delta|_{C_c^\infty}$. Let $u \in L^2_{w^{-1}}(\M)$, viewed as the dual space of $L^2_w(\M)$, be such that $u$ annihilates the range of $\Delta|_{C_c^\infty}$. To prove the lemma, it suffices to show that $u$ vanishes identically.

Such a function $u$ is in the domain of the adjoint $\Delta^*$ of $\Delta|_{C_c^\infty}$, and satisfies $\Delta^* u = 0$. It follows from formal self-adjointness that $\Delta u = 0$ in the distributional sense. By Weyl's lemma for harmonic functions, it follows that $\Delta u = 0$ also in the classical sense and that $u$ is smooth on  $\M$. 

Now on the end $E_-$, using coordinates $(r, \theta, y)$ with $(r, \theta)$ standard polar coordinates on $\R^2$ and $y$ a coordinate on $\M_2$, we expand $u$ in the eigenfunctions $\psi_n$ on $\M_2$ as well as in a Fourier series in $\theta$, as in \eqref{u-exp-2}. 
The same reasoning as in the proof of Lemma~\ref{lem:harmonic-} shows that the coefficients $b_{mn}(r)$ are either modified Bessel functions, for $n > 0$, powers $\pm m$, for $n= 0$ and $m \neq 0$, or a linear combination of constant and logarithm for $n=m=0$. 
Next we apply the condition that $u$ is in $L^2_{w^{-1}}(E_-)$. Notice that $w$ is a function only of $r$. So the expansion \eqref{u-exp-2} for $u$ is orthogonal in this weighted space, and by Bessel's inequality, each term in the sum must separately lie in $L^2_{w^{-1}}$. This condition means that we must take the exponentially decaying modified Bessel functions for $n > 0$, the decaying powers for $n=0$ and $m > 0$, and, most crucially, the constant (but not the logarithm) in the case $n=m=0$, since the logarithm barely fails to lie in our weighted $L^2$ space. That is, $u$ is in fact a bounded harmonic function on the $E_-$ end. 

Exactly the same reasoning applies to the $E_+$ end, and shows that $u$ is a bounded harmonic function decaying to zero on the $E_+$ end. 

Together, these statements show that $u |_K$ solves the homogeneous elliptic boundary problem \eqref{Laplace}, \eqref{Dtn-bc}, with $F=0$.  Lemma~\ref{lem:uniqueness} then implies that $u$ is the zero function, completing the proof. 
\end{proof}

It follows that
\begin{lemma}\label{lem:supp}
There exists functions $\varphi_1,\ldots,\varphi_M \in C_c^\infty(\M)$ and $\psi_1,\ldots, \psi_M \in C_c^\infty(X)$ such that $\varphi_1,\ldots,\varphi_M$ are a basis of  the null space of $\Id + \tE(0)$ and $\Delta \psi_1,\ldots,\Delta \psi_M$ are a basis for a subspace supplementary to the range of $\Id + \tE(0)$.
\end{lemma}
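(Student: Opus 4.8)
The plan is to take $\varphi_1, \dots, \varphi_M$ to be any basis of the null space of $\Id + \tE(0)$, and then observe (as already noted before the statement of the lemma) that such a basis automatically lies in $C_c^\infty(\M)$: since $\tE(0) = \ttE(0)$ has Schwartz kernel that is smooth in both variables and compactly supported in the left variable, any $\varphi$ with $(\Id + \tE(0))\varphi = 0$ equals $-\tE(0)\varphi$, which is therefore a smooth, compactly supported function of the left variable. So all the work is in constructing the $\psi_j$.

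For those, I would first record that $\Id + \tE(0)$, being Fredholm of index zero on $L^2_w(\M)$, has closed range $\mathcal{R}$ of codimension $M$, so the quotient map $Q : L^2_w(\M) \to L^2_w(\M)/\mathcal{R}$ is a bounded surjection onto an $M$-dimensional space. The goal is to find $\psi_1, \dots, \psi_M \in C_c^\infty(\M)$ so that $Q(\Delta\psi_1), \dots, Q(\Delta\psi_M)$ is a basis of the quotient; it then follows immediately that $\Delta\psi_1, \dots, \Delta\psi_M$ are linearly independent modulo $\mathcal{R}$ and that their span is a subspace supplementary to $\mathcal{R}$, which is exactly the assertion. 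To produce such $\psi_j$, I would invoke Lemma~\ref{lem:density}: the set $\{ \Delta\psi : \psi \in C_c^\infty(\M)\}$ is dense in $L^2_w(\M)$, hence its image under the continuous surjection $Q$ is dense in the finite-dimensional quotient $L^2_w(\M)/\mathcal{R}$, hence equals the whole quotient. Choosing $\psi_1,\dots,\psi_M$ to be preimages of any basis of $L^2_w(\M)/\mathcal{R}$ then completes the construction.

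I do not expect a serious obstacle here — all the real content has been front-loaded into the Fredholm property of $\Id + \tE(0)$ (which gives that $\mathcal{R}$ is closed of codimension $M$) and into Lemma~\ref{lem:density}. The one point I would be careful to articulate is why density, rather than something weaker, is the right hypothesis: it is precisely the density of $\Delta(C_c^\infty(\M))$ into the \emph{finite-dimensional} space $L^2_w(\M)/\mathcal{R}$ that forces surjectivity and thereby lets us extract the basis $Q(\Delta\psi_1), \dots, Q(\Delta\psi_M)$ from within $C_c^\infty(\M)$, with the $\psi_j$ compactly supported as required.
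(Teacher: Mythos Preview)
Your proposal is correct and is precisely the argument the paper has in mind: the paper simply writes ``It follows that'' after Lemma~\ref{lem:density}, relying on exactly the same two ingredients you spell out (the kernel structure of $\tE(0)$ for the $\varphi_i$, and density of $\Delta(C_c^\infty(\M))$ combined with finite codimension of the range for the $\psi_j$). You have merely filled in the details that the paper leaves implicit.
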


Define the rank $N$ operator $G_4$ by
\begin{equation}
G_4 = \sum_{i=1}^M \psi_i \langle \varphi_i, \cdot \rangle.
\label{G4}\end{equation}
where $\langle \varphi_i, \cdot \rangle$ denotes inner product with $\varphi_i$.
We define $G(k) = \tG(k) + G_4$ and $E(k) = (\Delta+k^2)G(k) - \Id = \tE(k) + (\Delta+k^2)G_4$. 
We then have

\begin{lemma}\label{lem:inv}
The operator $\Id  + E(0) = \Id + \tE(0) + \Delta G_4$ is invertible.
\end{lemma}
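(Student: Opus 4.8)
The operator $\Id + E(0) = (\Id + \tE(0)) + \Delta G_4$ is a finite-rank perturbation of the Fredholm operator $\Id + \tE(0)$, which has index zero. Hence $\Id + E(0)$ is also Fredholm of index zero, so it suffices to prove that it is injective on $L^2_w(\M)$. I would argue as follows: suppose $f \in L^2_w(\M)$ satisfies $(\Id + \tE(0))f + \Delta G_4 f = 0$. Since $\Delta G_4 f = \sum_i \langle \varphi_i, f\rangle \, \Delta \psi_i$ lies in the span of $\Delta \psi_1, \dots, \Delta \psi_M$, and by Lemma~\ref{lem:supp} this span is supplementary to the range of $\Id + \tE(0)$, the equation $(\Id + \tE(0)) f = -\Delta G_4 f$ forces \emph{both} sides to vanish. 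Indeed, the left-hand side lies in the range of $\Id + \tE(0)$, the right-hand side lies in a complementary subspace, and they are equal, so each is $0$.

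From $\Delta G_4 f = 0$ and the linear independence of $\Delta \psi_1, \dots, \Delta \psi_M$ (which is part of Lemma~\ref{lem:supp}, since these span an $M$-dimensional subspace), we conclude $\langle \varphi_i, f \rangle = 0$ for all $i = 1, \dots, M$. On the other hand, $(\Id + \tE(0)) f = 0$ means $f$ lies in the null space of $\Id + \tE(0)$, which by construction is spanned by $\varphi_1, \dots, \varphi_M$. Write $f = \sum_i a_i \varphi_i$. Then for each $j$, $0 = \langle \varphi_j, f \rangle = \sum_i a_i \langle \varphi_j, \varphi_i \rangle$. Since $\varphi_1, \dots, \varphi_M$ are linearly independent, their Gram matrix $(\langle \varphi_j, \varphi_i\rangle)$ is positive definite, hence invertible, so all $a_i = 0$ and therefore $f = 0$. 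This establishes injectivity, and combined with the index-zero Fredholm property, invertibility of $\Id + E(0)$.

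\textbf{Main obstacle.} The argument above is essentially formal linear algebra once Lemmas~\ref{lem:density} and \ref{lem:supp} are in hand; the only subtlety is making sure the ``supplementary subspace'' decomposition is used correctly — specifically, that an element of the range and an element of a complementary subspace that happen to be equal must both be zero. This is immediate from the definition of complementary (direct sum) subspaces, but it is worth stating cleanly. The genuinely substantive input is Lemma~\ref{lem:density}, which guarantees that the functions $\psi_i$ with $\Delta \psi_i$ spanning a complement to the range actually exist in $C_c^\infty(\M)$; that lemma in turn rests on the delicate choice of weight $w$ in \eqref{w} ensuring that the logarithmically growing harmonic function $\mathcal{U}$ of Lemma~\ref{lem:U} does \emph{not} lie in the dual space $L^2_{w^{-1}}(\M)$, so that Lemma~\ref{lem:uniqueness} can be applied. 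Granting all of this, the proof of invertibility is short.
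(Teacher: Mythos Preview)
Your proof is correct and follows essentially the same argument as the paper: both show injectivity by noting that $(\Id+\tE(0))f = -\Delta G_4 f$ forces each side to vanish (range versus complementary subspace), hence $f$ lies in the kernel of $\Id+\tE(0)$ and satisfies $\langle\varphi_i,f\rangle=0$ for all $i$, which gives $f=0$. The only cosmetic difference is that the paper assumes without loss of generality that the $\varphi_i$ are orthogonal in $L^2(\M)$, whereas you invoke the invertibility of the Gram matrix of a linearly independent set; these are equivalent.
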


\begin{proof}
Indeed, if $(\Id + \tE(0) + \Delta G_4) h = 0$
then $\Delta (G_4 h) = - (\Id + \tE(0)) h$ is simultaneously in the range of $\Id + \tE(0)$ and in the span of $\Delta \psi_i$. 
Therefore, $\Delta (G_4 h) = - (\Id + \tE(0)) h = 0$ using the supplementary property of Lemma~\ref{lem:supp}. This means
that $h$ is in the null space of $\Id + \tE(0)$, hence a linear combination of the $\varphi_i$. But then, assuming without loss of generality that the $\varphi_i$ are orthogonal in $L^2(\M)$, the equation $\Delta G_4 h = 0$ and \eqref{G4} 
implies that $h=0$. So the operator $\Id + \tE(0) + \Delta G_4$ has trivial null space. Since $\tE(0) + \Delta G_4$ is compact, this implies invertibility. 
\end{proof}


\subsection{Inverting $\Id + E(k)$}

We have $(\Delta + k^2) G(k) = \Id + E(k)$ By Lemma~\ref{lem:inv}, $\Id + E(0)$ is invertible, and by continuity (Lemma~\ref{lem:HS}), this implies that $\Id + E(k)$ is invertible for sufficiently small $k$, say $k \leq k_0$. We can therefore obtain the exact resolvent in the form 
$$
R(k) := (\Delta+k^2)^{-1} = G(k)(\text{Id} + E(k))^{-1}. 
$$
Write the inverse as
\begin{equation} \label{inverseF}
(\text{Id} + E(k))^{-1} = \text{Id} + S(k),
\end{equation}
so that
\begin{equation} \label{RGS}
R(k) = G(k)(\Id + S(k)) = G(k) + G(k) S(k).
\end{equation}

Our next task is to show that  $S(k)$ has similar properties as $E(k)$, in particular satisfies similar pointwise estimates. Clearly, $S(k)$ is a bounded
operator on $L^2_w(\M)$. Next, we derive an expression for $S(k)$ as follows. Note that from $(\ref{inverseF})$ we get
\begin{eqnarray}
\text{Id} = (\text{Id} + E(k))(\text{Id} + S(k)) = (\text{Id} + S(k))(\text{Id} + E(k)) 
\end{eqnarray}
It follows that 
\begin{equation*}
S(k) = -E(k) - E(k)S(k) = -E(k)-S(k)E(k).
\end{equation*}
Iterating gives
\begin{equation}
S(k) = -E(k) + E(k)^2 + E(k)S(k)E(k).
\label{Sk-identity}\end{equation}

Since $S(k)$ is a bounded operator, and $E(k)$ is a Hilbert-Schmidt operator, on $L^2_w$, this equation shows that $S(k)$ is also Hilbert-Schmidt, that is,
we have 
\begin{equation}
S(k) \in L^2_w(\M) \otimes L^2_{w^{-1}}(\M). 
\label{Sk-est1}\end{equation}
Moreover, we see from the explicit representation \eqref{tE} of $\tE(k)$, as well as the explicit representation of $(\Delta + k^2) G_4$, that we have
$E(k)$ in the spaces 
\begin{equation}
E(k) \in r^{-\infty} L^\infty(\M) \otimes L^2_{w^{-1}}(\M), \quad E(k) \in L^2_w(\M) \otimes L^\infty_{\mu'}(\M),
\label{Ek-est}\end{equation}
as well as 
\begin{equation}
E(k) \in r^{-\infty} L^\infty(\M) \otimes  L^\infty_{\mu'}(\M),
\label{Ek-est2}\end{equation}
where $L^\infty_{\mu'}(\M)$ is a $k$-dependent weighted $L^\infty$ space given by 
\begin{equation}
L^\infty_{\mu'}(\M) = \{ f \in L^\infty_{loc}(\M) \mid {\mu'}^{-1}f \in L^\infty(\M) \},
\end{equation}
\begin{equation}
\mu'(z', k) = \begin{cases}  1 \quad z' \in K \\
 {r'}^{2-n_+}  e^{-ckr'} \quad z' \in E_+ \\
 e^{-ckr'} \quad z' \in E_-.
\end{cases}
\end{equation}
To check the last line, note that if $kr \geq 1$ is large then the $\log kr$ factor from \eqref{tttE} can be absorbed in the exponential decay by adjusting the value of $c$, while for
$kr \leq 1$ we have $|\log kr| \leq |\log k|$, so this factor is then cancelled by one factor of $\ilg k$. (These estimates are not optimal, but they suffice for proving Riesz transform boundedness for $1 < p < 2$.) 

Now notice that given representations \eqref{Ek-est}, composition gives 
$$
E(k)^2 \in r^{-\infty} L^\infty(\M) \otimes  L^\infty_{\mu'}(\M)
$$
and using also \eqref{Sk-est1}, we have 
$$
E(k) S(k) E(k) \in r^{-\infty} L^\infty(\M) \otimes  L^\infty_{\mu'}(\M).
$$
Thus \eqref{Sk-identity} and \eqref{Ek-est2} shows that \eqref{Sk-est1} improves to 
\begin{equation}
S(k) \in r^{-\infty} L^\infty(\M) \otimes  L^\infty_{\mu'}(\M). 
\label{Sk-est2}\end{equation}

\subsection{Correction to the exact resolvent}

Recall from $(\ref{RGS})$ that we can write the resolvent as
\begin{equation*}
R(k) = G(k) + G(k)S(k).
\end{equation*}
We need to analyse the behaviour of $G(k)S(k)$. This is done by examining each term $G_i(k)S(k)$ for $1 \leq i \leq 4$. Note that  $S(k,z,z')$ is supported in the region $\{z \in \supp \chi \}$, only regions $\{z' \in \supp \chi \}$ are relevant for the kernel $G(k)$. To state the next proposition we define weight functions 
\begin{equation*} 
\begin{gathered}
\mu(z, k) = \begin{cases} 1 + |\log k| , \quad \qquad \quad \qquad  z \in K \\
e^{-ckr} r^{2-n_+}, \quad  \qquad  \quad \qquad  z \in E_+ \\
e^{-ckr} \big( 1 + |\log k| + \log r \big), \   z \in E_-
\end{cases} \end{gathered}
\quad 
\begin{gathered}
\nu(z, k) = \begin{cases} 1, \quad \qquad \qquad z \in K \\
r^{1-n_+} e^{-ckr} , \quad z \in E_+ \\
r^{-1} e^{-ckr} , \quad \quad  z \in E_-. 
\end{cases} \end{gathered}
\end{equation*}

\begin{proposition}
For $k \leq k_0$, the resolvent is given by
\begin{equation*}
(\Delta + k^2)^{-1} = G(k) + G(k)S(k),
\end{equation*}
satisfying 
\begin{equation}\label{GkSk-est}
\Big| (G(k)S(k))(z,z') \Big| \leq C \mu(z,k) {\mu'}(z', k). 
\end{equation}
Furthermore, the gradient of the correction term satisfies the estimate
\begin{equation}
\Big| (\nabla G(k)S(k))(z,z') \Big| \leq C \nu(z,k) {\mu'}(z', k). 
\label{nablaGS-est}\end{equation}
\end{proposition}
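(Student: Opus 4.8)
The plan is to estimate each of the four pieces $G_i(k)S(k)$ separately, using the representation \eqref{Sk-est2} for $S(k)$, namely that $S(k) \in r^{-\infty} L^\infty(\M) \otimes L^\infty_{\mu'}(\M)$, together with the pointwise bounds on $G_i(k)$ coming from the resolvent estimates \eqref{resolvent-3}, \eqref{resolvent-2}, the key lemma estimates \eqref{uestimates}, \eqref{graduestimates} (and the improved estimate \eqref{graduestimates2} where applicable), and the interior parametrix. The crucial structural fact, already noted just before the proposition, is that $S(k, z, z')$ is supported in $\{ z \in \supp \chi\}$, so only the restriction of $G_i(k)$ to its \emph{right} variable lying in $\supp \chi$ — a fixed compact set — is relevant. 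Thus we only ever need $G_i(k)(z, z'')$ for $z''$ in a compact set, which greatly simplifies the estimates: all the far-field decay we care about is in the \emph{left} variable $z$ of $G_i$, which becomes the left variable $z$ of the composition $G_i(k)S(k)$.

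First I would handle $G_1(k)$: its kernel is $(\Delta_\pm + k^2)^{-1}(z,z'')\phi_\pm(z)\phi_\pm(z'')$, and with $z''$ ranging over a compact set, the resolvent bounds \eqref{resolvent-3} and \eqref{resolvent-2} give $|G_1(k)(z,z'')| \leq C e^{-ckr}(1 + |\log k| + \log r)$ on $E_-$ (absorbing $|\log(k d(z,z''))| \leq |\log k| + \log r + C$), $\leq C e^{-ckr} r^{2-n_+}$ on $E_+$, and $\leq C(1+|\log k|)$ on $K$ — which is exactly the weight $\mu(z,k)$. Composing with $S(k)$: the $z''$-integral of $|G_1(k)(z, z'')|$ against the $L^\infty_{\mu'}$-valued factor of $S(k)$ (in the \emph{left} variable of $S$, which is compactly supported) and the $r^{-\infty}L^\infty$-factor (in the right variable of $S$, which becomes $z'$) produces exactly $C\mu(z,k)\mu'(z',k)$; the integral converges because $G_1(k)$ restricted to $z'' \in \supp\chi$ is bounded uniformly in $k$ (the $\log$ of $d(z,z'')$ stays bounded away from the diagonal, and the integrable singularity $d(z,z'')^{2-N}$ near the diagonal is handled by the local $L^1$ integrability of the Green function). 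The terms $G_2(k)$ (interior parametrix, compactly supported, smooth) and $G_4$ (finite rank, $\psi_i \in C_c^\infty$) are immediate: their kernels are bounded and compactly supported, contributing $O(\mu(z,k))$ trivially since $\mu \geq c > 0$ on any compact set. For $G_3(k)$, whose kernel is $(\Delta_\pm+k^2)^{-1}(z_\pm^\circ, z'')u_\pm(z,k)\phi_\pm(z'')$, the $z''$-dependence is bounded (since $z_\pm^\circ$ is a fixed point, $\phi_\pm$ compactly supported in $z''$-space where $z'' \in \supp\chi$ makes the resolvent factor bounded uniformly in $k$) and the $z$-dependence is controlled by \eqref{uestimates}, which gives precisely $C$ on $K$, $Cr^{2-n_+}e^{-ckr}$ on $E_+$, and $Ce^{-ckr}$ on $E_-$ — all dominated by $\mu(z,k)$. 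Assembling the four contributions and composing with $S(k)$ gives \eqref{GkSk-est}.

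For the gradient estimate \eqref{nablaGS-est}, I would differentiate each $G_i(k)$ in the left variable $z$ and repeat the above. The gradient of $G_1(k)$ uses the gradient resolvent bounds \eqref{gradresolvent-3}, \eqref{gradresolvent-2}: on $E_+$ this is $\leq C r^{1-n_+}e^{-ckr}$, on $E_-$ it is $\leq C r^{-1} e^{-ckr}$ (using that $d(z,z'')^{-1} \leq C r^{-1}$ when $z''$ is compact and $r$ large, and near the support of $\nabla\phi_-$ everything is bounded), and on $K$ it is $O(1)$ — which is exactly $\nu(z,k)$. Crucially, there are no $\log$ factors in the gradient resolvent bounds, which is why $\nu$ has no logarithm. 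The gradient of $G_3(k)$ is controlled by $|\nabla u_\pm(z,k)|$, and here one uses \emph{the improved estimate} \eqref{graduestimates2} — valid because $v_\pm = -\Delta\phi_\pm$ with $\phi_\pm$ vanishing on the opposite end — giving $C r^{1-n_+}e^{-ckr}$ on $E_+$ and $C(r^{-2} + (\ilg k) r^{-1})e^{-ckr}$ on $E_-$; since $\ilg k \leq 1$ and $r^{-2} \leq r^{-1}$, this is dominated by $C r^{-1}e^{-ckr} = \nu(z,k)$ on $E_-$. The gradients of $G_2(k)$ and $G_4$ are bounded and compactly supported, dominated by $\nu$ on their compact support. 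Composing with $S(k)$ as before yields \eqref{nablaGS-est}.

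\textbf{The main obstacle.} The delicate point is ensuring the convergence of the $z''$-integral $\int |(\nabla_z G_i(k))(z, z'')| \, |S(k)(z'', z')| \, dg(z'')$ \emph{uniformly in $k$}, specifically for $G_1$: on the $E_-$ end the gradient resolvent kernel $\nabla (\Delta_- + k^2)^{-1}(z,z'')$ behaves like $d(z,z'')^{-1}$ for $z''$ near the edge of $\supp\chi$ but far from the diagonal, which is bounded there, and like $d(z,z'')^{1-N}$ near the diagonal, which is locally integrable in $z''$ (as $N \geq 3$); since $S(k)$ is only in $L^2$ (not $L^\infty$) in its left variable via \eqref{Sk-est2}... wait — in fact \eqref{Sk-est2} gives $S(k) \in r^{-\infty}L^\infty \otimes L^\infty_{\mu'}$, so $S(k)$ is bounded in its left variable after all, and the $z''$-integral is over the compact set $\supp\chi$ of a locally integrable (in fact locally $L^1$) kernel times a bounded function — so convergence is routine. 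The real subtlety is therefore bookkeeping: making sure that the $|\log k|$ growth in $\mu$ on $K$ and on $E_-$ is correctly tracked (it comes \emph{only} from the $|\log(kd(z,z'))|$ term in the bare resolvent \eqref{resolvent-2}, via $G_1$, and is absent in the gradient), and that the $\log r$ term on $E_-$ in $\mu$ (also from \eqref{resolvent-2}) is correctly separated from the $|\log k|$ term. One should also double-check that the contribution of $G_3$ does not secretly carry a $\log k$ or $\log r$ on $E_-$: indeed \eqref{uestimates} gives only $Ce^{-ckr}$ there with no logarithm, consistent with $\mu$. The whole proof is a disciplined case analysis; no single step is hard, but the weights must line up exactly.
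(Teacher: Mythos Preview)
Your approach is the same as the paper's --- a term-by-term analysis of $G_i(k)S(k)$ using the compact left-variable support of $S(k)$ together with the resolvent and key-lemma pointwise bounds on $G_i$ --- and the overall structure is sound.

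There is, however, one genuine error in your $G_3$ analysis. You assert that the factor $(\Delta_\pm+k^2)^{-1}(z_\pm^\circ,z'')$, for $z''$ in the compact set $\supp\chi$, is ``bounded uniformly in $k$''. For the minus sign this is false: by \eqref{resolvent-2}, with $d(z_-^\circ,z'')$ bounded above and below, this factor behaves like $1+|\log k|$ as $k\to 0$. The paper's proof explicitly flags this (``taking into account the $\log k$ divergence in the $(\Delta_-+k^2)^{-1}(z_-^\circ,z')$ factor''), and your final ``obstacle'' paragraph compounds the error by concluding that $G_3$ carries no $\log k$.

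On $K$ and on $E_-$ your conclusion survives anyway, because $\mu(z,k)$ already contains the factor $1+|\log k|$ (which you attributed entirely to $G_1$). The delicate case is $z\in E_+$, where $\mu(z,k)=r^{2-n_+}e^{-ckr}$ has no logarithm: here the product $(1+|\log k|)\cdot|u_-(z,k)|$ must be shown to be $O(r^{2-n_+}e^{-ckr})$, and the stated bound \eqref{uestimates} alone does not do this. What saves the estimate is the same mechanism behind \eqref{graduestimates2}: since $\phi_-$ vanishes on $E_+$, the function $\Phi_+$ in the construction of $u_-$ can be taken to vanish on $E_+$, so the leading term $u_1$ of $u_-$ vanishes there and $|u_-|$ gains an extra factor of $\ilg k$ on $E_+$, cancelling the $|\log k|$ from the resolvent. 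The same cancellation is needed (and you do invoke \eqref{graduestimates2}, though without noting the $\ilg k$) for the gradient estimate on $E_+$. Once you restore the $|\log k|$ divergence and track this $\ilg k$ cancellation, your argument agrees with the paper's.
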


\begin{proof}
\underline{$G_1 S$ term:} We claim that this kernel is bounded pointwise by 
\begin{equation*}
\Big| (G_1(k)S(k))(z,z') \Big|  \leq \mu(z, k){\mu'}(z', k). 
\end{equation*}
To prove this estimate, we write the kernel of the composition $G_1(k) S(k)$ as 
\begin{equation*}
G_1(k)S(k)(k,z,z') = \sum_\pm \phi_\pm(z) (\Delta_{\pm} + k^2)^{-1} \left(\phi_\pm(z') S(k,\cdot,z') \right)(z),
\end{equation*}
apply the resolvent estimates \eqref{resolvent-3}, \eqref{resolvent-2} and the estimate \eqref{Sk-est2} for $S(k)$, and integrate in the inner variable. 

We can also use the gradient resolvent estimates \eqref{gradresolvent-3}, \eqref{gradresolvent-2} and obtain the estimate 
\begin{equation*}
|\nabla G_1(k)S(k)(k,z,z')| \leq C \nu(z, k) {\mu'}(z', k). 
\end{equation*}

\underline{$G_2 S$ term:} The kernel $G_2(k,z,z')$ is pseudodifferential of order $-2$, with compact support in $z$, and with no blowup as $k \to 0$. So 
with $\chi$ as in \eqref{ttE}, we get 
\begin{equation}
\big| G_2(k)S(k)(z,z') \big| + \big| \nabla G_2(k)S(k)(z,z') \big| \leq C \chi(z) {\mu'}(z', k).
\end{equation}

\underline{$G_3 S$ term:} This is very similar to the $G_1$ case. Using estimates \eqref{uestimates} and \eqref{graduestimates}, and taking into account the $\log k$ divergence in the $(\Delta_- + k^2)^{-1}(z_-^\circ,z') $ factor, we obtain 
\begin{equation*}
\Big| (G_3(k)S(k))(z,z') \Big|  \leq \mu(z, k) {\mu'}(z', k), \quad \Big| (\nabla G_3(k)S(k))(z,z') \Big|  \leq \nu(z, k) {\mu'}(z', k). 
\end{equation*}
%
%

\underline{$G_4 S$ term:} $G_4$ is a finite rank operator with kernel independent of $k$ and having compact support in $z$. So for a suitably chosen $\chi \in C_c^\infty(\M)$ we have 
\begin{equation}
\big| (G_4S(k))(z,z') \big| + \big| (\nabla G_4S(k))(z,z') \big| \leq C \chi(z) \mu'(z', k).
\end{equation}
\end{proof}

\subsection{Low energy asymptotics of the resolvent}

Using the construction of the resolvent in this section, together with the key lemma of the previous section, we show

\begin{proposition}\label{prop:ilgexp}
Let $\M$ be as above, and let $v \in C_c^\infty(\M)$. Define
$$
U(\cdot, k)=  (\Delta + k^2)^{-1} v. 
$$
Then on any compact set $K' \subset \M$, $U$ has a complete asymptotic expansion in nonnegative integral powers of $\ilg k$, with coefficients that are smooth on $K'$. 
\end{proposition}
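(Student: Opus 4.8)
The plan is to compare $U(\cdot,k)=R(k)v$ with the approximate solution furnished by the key lemma (Lemma~\ref{u2lemma}), whose explicit form already exhibits an $\ilg k$-expansion on compact sets, and then to absorb the difference into the (tiny) error that the key lemma leaves behind. Fix an integer $N\ge 0$ and let $\phi$ be the solution of $\Delta\phi=-v$ from Lemma~\ref{lem:Laplace-solvability}. Applying Lemma~\ref{u2lemma} to $v$ with $q=N+2$ produces $u(z,k)$ with $w:=(\Delta+k^2)u-v=O\big((\ilg k)^{N+2}r^{-\infty}\big)$; inspecting the proof of that lemma, in fact $w=(\ilg k)^{N+2}v_{N+2}+O(kr^{-\infty})$ with $v_{N+2}\in C_c^\infty(\M)$. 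For each fixed $k\in(0,k_0]$ the bounds \eqref{uestimates} give $u(\cdot,k)\in L^2(\M)$, and $\Delta u=v+w-k^2u\in L^2(\M)$, so $u(\cdot,k)$ lies in the domain of the self-adjoint Laplacian; since also $U(\cdot,k)\in\dom\Delta$ and $(\Delta+k^2)(u-U)=w$, we obtain
\begin{equation}
U(\cdot,k)=u(\cdot,k)-R(k)w .
\end{equation}

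\emph{Expansion of $u|_{K'}$.} Recall $u=\sum_{p=1}^{N+2}(\ilg k)^{p-1}u_p$, where each $u_p$ has the shape \eqref{u1def}: a multiple of $\chi_-(r)\,\ilg k\,K_0(kr)$ together with terms $\chi_-(r)\V_-(z,k)$ and $(1-\chi_-(r))\V_+(z,k)$ built from the Lemma~\ref{lem:CCH}-extensions (and their analogues for $u_2,\dots$). On the compact set $K'$ the coordinate $r$ is bounded, so \eqref{K0-asympt} yields $\ilg k\,K_0(kr)=1+(c_\gamma-\log r)\,\ilg k+O(k^{2-\epsilon})$ uniformly on $K'$; and, since the extensions $\V_\pm$ are taken to be $k$-independent outside a neighbourhood of infinity, integrating \eqref{CCH-approx} in $r$ from that neighbourhood gives $\V_\pm(z,k)-\V_\pm(z,0)=O(k)$ uniformly on $K'$. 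Combining, $u|_{K'}$ has an asymptotic expansion $u(z,k)=\sum_{j=0}^{N+1}\alpha_j(z)(\ilg k)^j+O(k^{1-\epsilon})$ in $L^\infty(K')$, with $\alpha_0=-\phi|_{K'}$ and $\alpha_1=-\beta\,\mathcal{U}|_{K'}$ by the Remark following Lemma~\ref{u2lemma}; the $\alpha_j$ do not depend on $N$, because the $u_p$ are constructed iteratively (increasing $q$ only appends higher-order terms).

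\emph{Smallness of $R(k)w$.} Using $R(k)=G(k)+G(k)S(k)$, the estimate \eqref{GkSk-est}, the product-resolvent estimates \eqref{resolvent-3}, \eqref{resolvent-2}, the uniform boundedness of $S(k)$ on $L^2_w(\M)$ and \eqref{Sk-est2}, one checks that $R(k)$ maps the fixed ($k$-uniform) family of smooth rapidly decreasing functions occurring in $w$ into functions bounded by $C(1+|\log k|)$ on $K'$, uniformly for $k\le k_0$ — the factor $1+|\log k|$ coming from the near-diagonal behaviour of $(\Delta_-+k^2)^{-1}$ in \eqref{resolvent-2}, the remaining contributions from the two-dimensional end being absorbed into $e^{-ckr'}$ or into a single power of $\ilg k$ as in the computation preceding \eqref{Sk-est2}. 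Applied to $w=(\ilg k)^{N+2}v_{N+2}+O(kr^{-\infty})$ this gives $\|R(k)w\|_{L^\infty(K')}\le C(\ilg k)^{N+2}|\log k|+Ck|\log k|=O\big((\ilg k)^{N+1}\big)$, so that
\begin{equation}
U(z,k)\big|_{K'}=\sum_{j=0}^{N}\alpha_j(z)\,(\ilg k)^j+O\big((\ilg k)^{N+1}\big)\qquad\text{in }L^\infty(K').
\end{equation}

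\emph{Upgrading to $C^\infty(K')$, and the main obstacle.} Since $(\Delta+k^2)U=v$, matching powers of $\ilg k$ in the distributional identity $\Delta U=v-k^2U$ on $K'$ (the remainder being $O((\ilg k)^{N+1})$ and the $k^2$-terms being $O(k^2)$) forces $\Delta\alpha_0=v$ and $\Delta\alpha_j=0$ for $1\le j\le N$; by Weyl's lemma each $\alpha_j\in C^\infty(K')$. Setting $E_N:=U-\sum_{j\le N}\alpha_j(\ilg k)^j$ we then have $(\Delta+k^2)E_N=-k^2\sum_{j\le N}\alpha_j(\ilg k)^j=O(k^2)$ in $C^\infty(K')$, and the interior elliptic estimates for $\Delta+k^2$ together with the $L^\infty(K')$ bound on $E_N$ give $\|E_N\|_{C^m(K'')}=O((\ilg k)^{N+1})$ for every $m$ and every $K''\Subset K'$; since $N$ and $K'$ were arbitrary, this is the asserted expansion. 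The step I expect to be the main obstacle is the third one: establishing the uniform mapping bound $R(k):(\text{fixed space of rapidly decreasing functions})\to(1+|\log k|)L^\infty(K')$, i.e. that the true resolvent loses only a fixed finite power of $\ilg k$ on compact sets. This rests on the explicit parametrix and on the weighted estimates \eqref{GkSk-est}, \eqref{Sk-est2}, and one must be careful that the delicate cancellation of the $-\log k$ divergences of the individual parametrix pieces on the two-dimensional end — already encoded in the boundedness of $u$ in Lemma~\ref{u2lemma} — is not spoiled.
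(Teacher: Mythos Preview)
Your proposal is correct and follows essentially the same route as the paper: construct the approximate solution $u$ from the key lemma with arbitrarily large $q$, write $U=u-R(k)\big((\Delta+k^2)u-v\big)$, and observe from the explicit parametrix together with \eqref{GkSk-est} that the resolvent kernel on $K'$ is $O(|\log k|)=O((\ilg k)^{-1})$, so the correction is $O((\ilg k)^{q-1})$. The paper's proof is terser---it neither spells out the $C^\infty$ upgrade via elliptic regularity nor verifies in detail the expansion of $u|_{K'}$---but the structure and ingredients are identical, and the resolvent bound you flag as the main obstacle is precisely what the paper reads off in one line from the parametrix construction.
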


\begin{proof}
We first choose an arbitrary positive integer $q$, and use the key lemma,  Lemma~\ref{u2lemma},  to construct an approximate solution $u$. By construction, $u$ has an expansion on $K'$ in nonnegative integral powers of $\ilg k$ up to $(\ilg k)^{q}$. 
Then we have 
$$
U = u - (\Delta + k^2)^{-1} \Big( (\Delta + k^2) u - v \Big).
$$
By Lemma~\ref{u2lemma}, $(\Delta + k^2) u - v = O((\ilg k)^q r^{-\infty})$. Moreover, the explicit representation of $G(k) = G_1(k) + G_2(k) + G_3(k) + G_4(k)$, together with the estimate \eqref{GkSk-est} above for $G(k) S(k)$, shows that the resolvent kernel is $O(|\log k|) = O((\ilg k)^{-1})$. It follows that $U$ has an expansion in powers of $\ilg k$ up to $(\ilg k)^{q-1}$. Since $q$ is arbitrary, this shows that $U$ has in fact a complete asymptotic expansion in powers of $\ilg k$. 
\end{proof}




\section{$L^p$ boundedness of the Riesz transform}\label{sec:le}
The goal of this chapter is to investigate the $L^p$-boundedness of the Riesz transform via the resolvent constructed in the previous section. We use the identity 
\begin{equation}
\Delta^{-1/2} = \frac{2}{\pi} \int_0^\infty (\Delta + k^2)^{-1} \, dk
\end{equation}
We use this representation to break up the Riesz transform into two parts, following \cite{CCH, HS2}. We define 
\begin{equation}\begin{aligned}
F_<(\xi) &= \frac{2}{\pi} \int_0^{1} (\xi^2 + k^2)^{-1} \dk \\
F_>(\xi) &= \frac{2}{\pi} \int_{1}^\infty (\xi^2 + k^2)^{-1} \dk,
\end{aligned}\label{F<>}\end{equation}
Then we call $\nabla F_<(\Delta)$ and $\nabla F_>(\Delta)$ the low-energy and high-energy Riesz transform, respectively. Clearly, the sum of these two operators is the Riesz transform $T = \nabla \Delta^{-1/2}$. So, to prove $L^p$ boundedness of the Riesz transform, it suffices to prove the $L^p$ boundedness of the low-energy and high-energy Riesz transforms separately. 

\subsection{Preparatory results}

The goal of this section to show $\nabla F_<(\sqrt{\Delta})$ is $L^p$-bounded for all $1 < p \leq 2$. 
Recall from Part I that
\begin{equation*}
F_<(\xi) = \frac{2}{\pi \xi}\left(\frac{\pi}{2} - \tan^{-1} \left(\frac{\xi}{k_0} \right) \right). 
\end{equation*}

The following result for spectral multipliers is required.
\begin{lemma} \label{lemmaspecmult}
Let $\Delta_\mu$ be the Laplace operator acting on a weighted manifold $M$ with smooth measure $\mu$. Let $F(\xi) = \pi/2 - \tan^{-1}(\xi)$. If $(M,\mu)$ satisfies the doubling condition and the heat kernel satisfies Gaussian upper bounds, then the operator $F(\sqrt{\Delta_\mu}/a)$, for any $a >0$, defined initially on $L^2(M,\mu)$ via the spectral theorem can be extended to a bounded operator on all $L^p(M,\mu)$ spaces and
\begin{equation*}
\left\| F \left(\frac{\sqrt{\Delta_\mu}}{a} \right) \right\|_{p \rightarrow p} \leq C_a < \infty,
\end{equation*}
for $1 \leq p \leq \infty$.
\end{lemma}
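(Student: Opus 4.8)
The plan is to invoke a standard spectral multiplier theorem for operators satisfying Gaussian heat kernel bounds on a doubling space. The key observation is that $F(\xi) = \pi/2 - \tan^{-1}(\xi)$ is a very well-behaved function: it is smooth on $[0,\infty)$, bounded (between $0$ and $\pi/2$), and decays like $\xi^{-1}$ as $\xi \to \infty$, with all derivatives satisfying H\"ormander-type symbol estimates $|F^{(j)}(\xi)| \leq C_j (1+\xi)^{-1-j}$ (indeed $\tan^{-1}$ has derivatives $\frac{d}{d\xi}\tan^{-1}\xi = (1+\xi^2)^{-1}$, and higher derivatives are rational functions decaying accordingly). Thus $F$ satisfies a Mikhlin--H\"ormander condition of any order, and in particular belongs to the symbol class required by the Hebisch/Duong--Ouhabaz-type spectral multiplier theorems valid on doubling spaces with Gaussian heat kernel upper bounds.

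Concretely, first I would recall that under the doubling condition and Gaussian upper bounds for $e^{-t\Delta_\mu}$, the self-adjoint operator $\Delta_\mu$ admits a bounded Borel functional calculus on $L^p(M,\mu)$ for all $1 < p < \infty$ for any multiplier $m$ satisfying $\sup_{t>0}\|\phi \, m(t\cdot)\|_{C^s} < \infty$ for $s$ larger than half the homogeneous dimension (here $\phi$ is a fixed bump function away from the origin); see e.g. the results quoted in \cite{CCH, HS2} and the references therein. Second, I would verify that $F$ meets this condition: since $F$ is smooth up to $\xi = 0$ and since $F(\xi), \xi F'(\xi), \xi^2 F''(\xi), \dots$ are all bounded on $(0,\infty)$, we get $\sup_{t > 0} \| \phi(\cdot) F(t \cdot /a) \|_{C^s} \le C_{s,a} < \infty$ uniformly, for every $s$; the dilation by $1/a$ only changes the constant. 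Third, for the endpoint cases $p=1$ and $p=\infty$, I would argue separately: boundedness on $L^\infty$ (equivalently $L^1$ by duality, using self-adjointness of $\Delta_\mu$) can be obtained because the kernel of $F(\sqrt{\Delta_\mu}/a)$ can be compared to that of $(\Delta_\mu/a^2 + 1)^{-1/2}$ or handled via the explicit representation $F(\sqrt{\Delta_\mu}/a) = a\int_0^\infty \frac{1}{\sqrt{\Delta_\mu} + a k}\,\frac{dk}{?}$ — more cleanly, one can write $F(\sqrt{\Delta_\mu}/a)$ as an average of resolvents $(\Delta_\mu + a^2 k^2)^{-1/2}$-type operators whose kernels are nonnegative (hence $L^\infty \to L^\infty$ bounded with norm equal to the value at the constant function), giving the uniform bound directly.

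In more detail for the endpoints: using $\pi/2 - \tan^{-1}(\xi) = \int_\xi^\infty (1+s^2)^{-1}\,ds = \int_0^\infty (\xi + u)^{-1} e^{-u}\,\sin(\cdot)$ — rather, the cleanest route is $\frac{\pi}{2} - \tan^{-1}(\xi) = \int_0^\infty \frac{e^{-\xi t}}{t}\sin t \, dt$ is not sign-definite, so instead I would use that $(\Delta_\mu + a^2k^2)^{-1}$ has a nonnegative kernel (true on any complete manifold), integrate the identity analogous to \eqref{F<>} to express $a^{-1}F(\sqrt{\Delta_\mu}/a)$ acting on $1$, and conclude the $L^\infty$ operator norm equals $\sup$ of $F(\sqrt{\Delta_\mu}/a)\mathbf 1 = F(0) = \pi/2$, hence is finite and $a$-independent up to the stated constant; by duality and self-adjointness this gives $L^1$. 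Interpolating the endpoint bounds with the $L^2$ bound (trivial from the spectral theorem since $\|F\|_\infty = \pi/2$) covers all $1 \le p \le \infty$, or alternatively one interpolates the Mikhlin--H\"ormander range $1<p<\infty$ with the endpoints. The main obstacle I anticipate is purely bookkeeping: making sure the chosen spectral multiplier theorem's hypotheses (the precise smoothness order $s$ relative to the doubling dimension, and whether it is stated for $\sqrt{\Delta_\mu}$ or $\Delta_\mu$) are matched, and handling the endpoints $p = 1, \infty$ which are genuinely outside the scope of the standard H\"ormander-type theorems and require the positivity argument above; neither is deep, but both need to be stated carefully.
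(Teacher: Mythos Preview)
Your approach via a H\"ormander-type spectral multiplier theorem is exactly what the paper does: it simply refers to the sketch in Part I \cite{HS2}, which in turn invokes \cite{DOS}. So for $1<p<\infty$ your plan and the paper's coincide, and your verification that $F$ satisfies the required symbol estimates is correct and routine.

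The only real gap in your proposal is the endpoint argument for $p=1$ and $p=\infty$. Your discussion there cycles through several candidate formulas (Laplace transform with $\sin t/t$, resolvent averages, etc.) without settling on one that works; in particular, you ultimately rely on the kernel of $F(\sqrt{\Delta_\mu}/a)$ being nonnegative, but none of the representations you write down establishes this. The clean way to close this is to observe that $s\mapsto F(\sqrt{s}/a)=\arctan(a/\sqrt{s})$ is completely monotone on $(0,\infty)$: by Bernstein's theorem one can write
\[
F(\sqrt{\Delta_\mu}/a)=\int_0^\infty g(t)\,e^{-t\Delta_\mu}\,dt,\qquad g\ge 0,\quad \int_0^\infty g(t)\,dt=F(0)=\tfrac{\pi}{2},
\]
(explicitly, $tg(t)=\pi^{-1/2}e^{-a^2t}\int_0^{a\sqrt t}e^{v^2}\,dv$). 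Since the heat semigroup is a contraction on every $L^p$, $1\le p\le\infty$, this immediately gives $\|F(\sqrt{\Delta_\mu}/a)\|_{p\to p}\le \pi/2$ for all $1\le p\le\infty$ --- in fact this single line handles every $p$ at once and makes the multiplier theorem unnecessary for this particular $F$. Either route is fine, but your endpoint paragraph as written does not yet constitute a proof.
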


For the proof, see the sketch proof in Part I, which is in turn based on \cite{DOS}. 

\

We will also need the following boundedness results for one-dimensional kernels, acting on functions on a half-line with measure $r^{d-1} dr$, which we use as a simple model for radially symmetric operators on $\R^d$, or $\R^d \times \M_i$. 

\begin{lemma}\label{lem:d1d2} Let $d_1 \geq 1$ and $d_2 \geq 1$ be two real numbers. 
Consider the kernel $K(x,y)$, acting on functions defined on $[1, \infty)$, defined by
\begin{equation*}
K(x,y) = \begin{cases}
		x^{-a}y^{-b}, & \; 1 \leq x \leq y,\\
		x^{-a'}y^{-b'}, & \;  1 \leq y < x.
	\end{cases}
\end{equation*}
If $p(a+b-d_2) > d_1-d_2$, $p(a'+b'-d_2) > d_1-d_2$ and 
\begin{equation*}
\frac{d_1}{\min\{d_1,a' \}} < p < \frac{d_2}{\max\{0,d_2-b \}},
\end{equation*}
then $K$ is bounded as an operator from $L^p([1, \infty); r^{d_2 - 1} dr)$ to $L^p([1, \infty); r^{d_1 - 1} dr)$. 
\end{lemma}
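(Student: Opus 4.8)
The strategy is to decompose $K$ into its two triangular pieces and estimate each by Schur's test with power weights, since both pieces are homogeneous-type kernels with a clean scaling structure. Write $K = K_{\le} + K_{>}$, where $K_{\le}$ is supported on $\{1 \le x \le y\}$ and $K_{>}$ on $\{1 \le y < x\}$. For $K_\le$, I would test the kernel against the trial function $y \mapsto y^{-s}$ on the $y$-side (with measure $y^{d_2-1}\,dy$) and $x \mapsto x^{-s}$ on the $x$-side (with measure $x^{d_1-1}\,dx$), choosing the single exponent $s$ so that Schur's lemma in the weighted $L^p$ setting applies; equivalently one balances the two conjugate integrals
\begin{equation*}
\int_x^\infty y^{-b}\, y^{-s q'}\, y^{d_2-1}\,dy \quad \text{and} \quad \int_1^y x^{-a}\, x^{-s p}\, x^{d_1-1}\,dx,
\end{equation*}
where $p' = q'$ is the conjugate exponent. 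Each of these is a one-variable integral of a pure power, so convergence at $\infty$ (for the first) and at the relevant endpoint (for the second), together with the requirement that the resulting powers of $x$ (resp. $y$) combine to give the correct homogeneity, produces exactly the stated inequalities. The same computation, with the roles of the endpoints $1$ and $\infty$ swapped appropriately, handles $K_>$ and produces the primed conditions.

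\textbf{Key steps, in order.} First, I would record the weighted Schur test: if there are positive measurable functions $h_1$ on the $x$-space and $h_2$ on the $y$-space and a constant $C$ with
\begin{equation*}
\int K(x,y)\, h_2(y)^{p'}\, y^{d_2-1}\,dy \le C\, h_1(x)^{p'}\, x^{d_1-1}\, \big/ \, \text{(appropriate factor)}, \qquad \int K(x,y)\, h_1(x)^{p}\, x^{d_1-1}\,dx \le C\, h_2(y)^{p}\,\cdots,
\end{equation*}
then $K: L^p(y^{d_2-1}dy) \to L^p(x^{d_1-1}dx)$ is bounded; the precise normalisation is standard and I would state it carefully with the two measures explicit. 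Second, for $K_\le$ I take $h_1(x) = x^{-\alpha}$, $h_2(y) = y^{-\alpha}$ and compute the two integrals explicitly; convergence of $\int_x^\infty y^{-b - \alpha p' + d_2 - 1}\,dy$ requires $b + \alpha p' > d_2$, and after integrating one gets a power $x^{d_2 - b - \alpha p'}$, which must be $\le C x^{(d_1-1) - \alpha p' \cdot (\text{something})}$ — matching the powers pins down the range of admissible $\alpha$, and the existence of such an $\alpha$ is exactly equivalent to $p(a+b-d_2) > d_1 - d_2$ together with the upper/lower bounds on $p$ involving $\min\{d_1, a'\}$ and $\max\{0, d_2-b\}$. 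Third, I repeat verbatim for $K_>$ with the integrals now running $\int_1^x$ and $\int_y^\infty$, getting the primed conditions. Fourth, combine: $\|K\| \le \|K_\le\| + \|K_>\|$.

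\textbf{Main obstacle.} The routine part is genuinely routine — they are all integrals of monomials — so the only real work is bookkeeping: getting the exponent algebra exactly right so that the pair of inequalities "$\exists \alpha$ making both Schur integrals finite with matched homogeneity" collapses to precisely the four stated conditions $p(a+b-d_2) > d_1-d_2$, $p(a'+b'-d_2) > d_1-d_2$, $p > d_1/\min\{d_1,a'\}$, $p < d_2/\max\{0,d_2-b\}$, with no spurious extra constraints and no gap. I expect the delicate point to be the endpoint behaviour at $r=1$ versus $r=\infty$: the condition $p > d_1/\min\{d_1, a'\}$ arises from requiring $\int_1^x x^{\cdots}\,dx$ to be controlled near $x=1$ (which is automatic unless $a'$ is large, i.e. the kernel $K_>$ decays fast in $x$) or near $x = \infty$, and one must check which endpoint is binding; similarly $p < d_2/\max\{0, d_2-b\}$ comes from the tail integral $\int^\infty y^{\cdots}\,dy$ in the $K_\le$ piece. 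A clean way to organise this is to substitute $x = e^s$, $y = e^t$, turning both pieces into convolution operators on the half-line in the $(s,t)$ variables against exponentially decaying kernels, after which boundedness on the relevant weighted $L^p$ reduces to finiteness of an explicit one-dimensional integral (a Young-type inequality), and the four conditions become transparent sign conditions on the exponents. I would present the argument in whichever of these two equivalent forms (direct Schur, or logarithmic-substitution Young) reads most cleanly.
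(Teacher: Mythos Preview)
Your plan is correct in principle --- a weighted Schur test with power weights, or the logarithmic substitution reducing to Young's inequality, would both succeed --- but the paper takes a noticeably shorter and more elementary route that avoids any free weight parameter.

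The paper treats only the piece $K_\le$ directly and obtains $K_>$ by duality (the adjoint of $K_>$, as a map $L^{p'}(r^{d_1-1}dr)\to L^{p'}(r^{d_2-1}dr)$, has exactly the form of a $K_\le$ kernel with the roles of the parameters swapped). For $K_\le$ it simply applies H\"older's inequality to the inner integral with the trivial splitting $y^{d_2-1}\,dy = (y^{d_2-1})^{1/p}\,(y^{d_2-1})^{1/p'}\,dy$:
\[
\Big|\int_x^\infty x^{-a} y^{-b} f(y)\,y^{d_2-1}\,dy\Big|^p \le x^{-pa}\Big(\int_x^\infty |f|^p\,y^{d_2-1}\,dy\Big)\Big(\int_x^\infty y^{-p'b}\,y^{d_2-1}\,dy\Big)^{p/p'}.
\]
The second factor converges precisely when $p'b>d_2$, i.e. $p<d_2/\max\{0,d_2-b\}$, and evaluates to a constant times $x^{(d_2-p'b)(p/p')}$. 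Bounding the first factor crudely by $\|f\|_p^p$ and integrating in $x$ then gives a single power integral $\int_1^\infty x^{-pa-pb+(p-1)d_2+d_1-1}\,dx$, whose convergence is exactly $p(a+b-d_2)>d_1-d_2$. That is the whole argument for $K_\le$; no auxiliary exponent $\alpha$ needs to be chosen or eliminated.

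So compared with your plan: you introduce a Schur weight $x^{-\alpha}$ and must then check that the system of inequalities on $\alpha$ is solvable iff the four stated conditions hold --- genuine but unnecessary bookkeeping here. The paper's crude bound $\int_x^\infty |f|^p \le \|f\|_p^p$ is what lets it bypass that optimisation entirely. Your logarithmic-substitution alternative is exactly what the paper uses for the \emph{next} lemma (the scale-invariant endpoint case $a+b=a'+b'=d$, $d_1=d_2$), where the direct H\"older argument above fails because the $x$-integral becomes borderline; it is not needed for the present lemma.
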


\begin{proof}
It is sufficient to consider the case $x \leq y$, as the other follows from duality. 
\begin{eqnarray*}
\|K f\|_p^p &=& \int_1^{\infty} \left|\int_{x}^\infty x^{-a} y^{-b} f(y) \, y^{d_2-1} \dy \right|^p x^{d_1-1} \dx \\
&\leq& \int_1^{\infty} x^{-pa} \left(\int_{x}^\infty |f(y)|^p \, y^{d_2-1} \dy \right) \left(\int_{x}^\infty y^{-p'b} \, y^{d_2-1} \dy \right)^{p/p'} x^{d_1-1} \dx \\
&=& \int_1^{\infty} x^{-pa} \left(\int_{x}^\infty |f(y)|^p \, y^{d_2-1} \dy \right) \left(\left[\frac{y^{-p'b+d_2}}{-p'b+d_2} \right]_{x}^\infty \right)^{p/p'} x^{d_1-1} \dx \\
&\leq& C_1 \left(\int_1^{\infty} x^{-pa-pb+(p-1)d_2+d_1-1} \dx \right) \|f\|_p^p \\
&\leq& C_2 \|f\|_p^p.
\end{eqnarray*}
Note for the $y$-integral we used $-p'b+d_2 < 0$, which is equivalent to $p < d_2/\max\{0, d_2-b \}$. For $y < x$, we obtain by duality the condition $-pa'+d_1 < 0$, which is equivalent to $p > d_1/\min\{d_1,a' \}$. Furthermore, for the $x$-integrals to converge we require that $p(a+b-d_2) > d_1-d_2$ and $p(a'+b'-d_2) > d_1-d_2$.
\end{proof}

In the case that $d_1 = d_2 = d$, the lemma above requires that $a+b > d$ and $a'+b' > d$. The limiting case where $a+b = d$ and $a'+b' = d$ is covered by the following result. Due to scaling invariance we state the result on $(0, \infty)$ rather than $[1, \infty)$.

\begin{lemma}\label{lem:d} Let $d \geq 1$ be a real number. 
Consider the kernel $K(x,y)$, acting on functions defined on $(0, \infty)$, defined by
\begin{equation*}
K(x,y) = \begin{cases}
		x^{-a}y^{-b}, & \; 0 < x \leq y,\\
		x^{-a'}y^{-b'}, & \;  0 < y < x.
	\end{cases}
\end{equation*}
If $a+b = d = a' + b'$, then $K$ is bounded on $L^p((0, \infty); r^{d-1} dr)$ provided that $a' > a$ and 
\begin{equation}
\frac{d}{a'} < p < \frac{d}{a}. 
\label{p-aa'-cond}\end{equation}
\end{lemma}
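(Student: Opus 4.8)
The plan is to reduce to two elementary one-dimensional Hardy-type inequalities, one for the ``lower triangle'' $x \le y$ and one for the ``upper triangle'' $y < x$, and to exploit the scaling structure $a+b = d = a'+b'$ that puts us exactly at the borderline case (where the kernel is homogeneous of degree $-d$ and hence formally bounded on $L^p$ for \emph{every} $p$, the restriction on $p$ coming only from the local behaviour near $0$ and $\infty$). First I would split $K = K_1 + K_2$ where $K_1$ is supported on $\{0 < x \le y\}$ and $K_2$ on $\{0 < y < x\}$, and treat each half separately; by the substitution $x \mapsto 1/x$, $y \mapsto 1/y$ (which sends $r^{d-1}dr$ to $r^{-d-1}dr$ and swaps the two triangles) the estimate for $K_2$ with exponents $(a', b')$ follows from the estimate for $K_1$ with exponents $(d-a', d-b') = (b', a')$; so it suffices to prove boundedness of $K_1$ on $L^p((0,\infty); r^{d-1}dr)$ under the single hypothesis $a + b = d$ together with the appropriate endpoint condition on $p$.

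Next I would establish the boundedness of $K_1$ by the classical Schur test with power weights. On the homogeneous space $(0,\infty)$ with measure $d\nu(r) = r^{d-1}dr$, one looks for an exponent $\sigma$ such that
\begin{equation*}
\int_0^\infty K_1(x,y)\, y^{\sigma}\, y^{d-1}\, dy \le C\, x^{\sigma}\, x^{d-1}\quad\text{(as a weight for the adjoint)},
\end{equation*}
or more symmetrically, writing the Schur weight as $h(r) = r^{\gamma}$, one needs
\begin{equation*}
\int_x^\infty x^{-a} y^{-b} \, y^{\gamma}\, y^{d-1}\, dy \le C\, x^{\gamma}, \qquad
\int_0^y x^{-a} y^{-b}\, x^{\gamma'}\, x^{d-1}\, dx \le C\, y^{\gamma'},
\end{equation*}
with $\gamma, \gamma'$ related to $p, p'$ in the usual way ($\gamma = -\delta/p'$, $\gamma' = \delta/p$ for a suitable $\delta$). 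Because $a + b = d$, the first integral $\int_x^\infty y^{-b+\gamma+d-1}dy$ converges at infinity precisely when $\gamma < b - d = -a$, and the second integral $\int_0^y x^{-a+\gamma'+d-1}dx$ converges at $0$ precisely when $\gamma' > a - d = -b$; after translating the Schur exponent back into a condition on $p$ via $\gamma p' = -\gamma' p$ (reflecting $L^p$ self-duality with the weight), these two inequalities collapse to $d/a' < p < d/a$ after also invoking the $K_2$ half — this is where the hypothesis $a' > a$ enters, ensuring the interval $(d/a', d/a)$ is nonempty. Carrying out the Schur test then yields $\|K_1 f\|_{L^p(r^{d-1}dr)} \le C\|f\|_{L^p(r^{d-1}dr)}$ directly, and combining with the reflected estimate for $K_2$ completes the proof.

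An alternative, perhaps cleaner, route is to change variables to $t = \log r$, which turns $(0,\infty)$ with $r^{d-1}dr$ into $\mathbb{R}$ with $e^{dt}\,dt$, and then absorb the weight by setting $g(t) = e^{dt/p} f(e^t)$; under this conjugation $K_1$ becomes a genuine convolution operator $g \mapsto \kappa * g$ on $L^p(\mathbb{R}, dt)$ with convolution kernel $\kappa(s) = e^{-as + (d/p - d)(-s)}\mathbf 1_{s \le 0}\cdot(\text{const})$ — more precisely $\kappa(s) = C e^{(b - d/p')s}$ for $s \le 0$ — which lies in $L^1(\mathbb{R})$ exactly when the exponent $b - d/p' > 0$, i.e. $p < d/a$ (using $a+b=d$), and similarly the $K_2$ piece contributes an $L^1$ convolution kernel supported on $s > 0$ iff $p > d/a'$. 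Young's inequality $\|\kappa * g\|_p \le \|\kappa\|_1 \|g\|_p$ then gives boundedness on the open interval $(d/a', d/a)$ at once. I would probably present this convolution argument as the main proof since it makes the role of every hypothesis transparent, with the Schur test mentioned only as a remark.

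The main obstacle is not any single estimate — each is a one-line integral — but rather bookkeeping the exact correspondence between the Schur/convolution exponent and the admissible range of $p$, and making sure the endpoint conditions at $r \to 0$ and $r \to \infty$ are attributed to the correct triangle ($K_1$ versus $K_2$) so that the final interval comes out as the stated $(d/a', d/a)$ and the nonemptiness hypothesis $a' > a$ is used exactly once. One should also note at the outset why the ``interior'' convergence condition $p(a+b-d) > 0$ that appeared in Lemma~\ref{lem:d1d2} is now vacuous: here $a + b - d = 0$, so the $x$-integral is scale-invariant and neither converges nor diverges at a definite rate — it is genuinely the borderline, and that is precisely why one must pass to the logarithmic variable (or use a weighted Schur test) rather than integrating the kernel directly as in the proof of Lemma~\ref{lem:d1d2}.
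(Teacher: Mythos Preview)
Your proposal is correct, and the ``alternative, perhaps cleaner'' convolution route you ultimately settle on is precisely the paper's own proof: conjugate by the isometry $f \mapsto x^{d/p} f$ to pass from $L^p((0,\infty); r^{d-1}dr)$ to $L^p((0,\infty); r^{-1}dr)$, then substitute $s = \log x$ to land in $L^p(\mathbb{R}; ds)$, where $K$ becomes convolution with the kernel
\[
u(s) = \begin{cases} e^{(d/p - a)s}, & s \le 0, \\ e^{(d/p - a')s}, & s > 0, \end{cases}
\]
and the condition $d/a' < p < d/a$ is exactly what makes $u \in L^1(\mathbb{R})$. Your exponent $b - d/p'$ equals $d/p - a$ (since $a+b=d$), so your computation agrees with the paper's.

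The Schur test with power weights that you sketch first is a genuine alternative the paper does not pursue; it would also work, but the convolution argument is shorter and, as you note, makes the role of each hypothesis transparent. Since you already identify the convolution route as the one to present, your writeup would essentially reproduce the paper's proof. One minor point: the reflection $x \mapsto 1/x$ you mention in the first paragraph is unnecessary once you commit to the convolution method, since both triangles are handled simultaneously by checking integrability of $u$ on each half-line.
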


\begin{proof}
The proof follows \cite[Theorem 5.1]{HS1}. We make the transformation $M$  
defined by 
$$
M : L^p((0, \infty); r^{d-1} dr) \to L^p((0, \infty); r^{-1} dr), \quad (Mf)(x) = x^{d/p} f(x).
$$
This is an isometry, and the corresponding operator $\tilde K$ has kernel given by 
$$
\tilde K(x,y) = x^{d/p} K(x,y) y^{d-d/p}.
$$
Under the conditions of the lemma, this is a function of $x/y$. 
We then change variable to $s = \log x$, inducing an isometry from $L^p((0, \infty); r^{-1} dr)$ to $L^p(\R; ds)$ and then $\tilde K$ becomes a convolution kernel $u(s-t)$, with 
$$
u(s) = \begin{cases} e^{(d/p-a) s}, \quad s \leq 0 \\
e^{(d/p-a') s}, \quad s > 0 .
\end{cases}
$$
The condition \eqref{p-aa'-cond} then ensures that this function $u$ is exponentially decaying in both directions, and is therefore in $L^1$. As is well-known, convolution with an $L^1$ function is a bounded map on all $L^p$ spaces on the real line. 
\end{proof}

\subsection{$L^p$ boundedness of the low energy Riesz transform}

\begin{proposition}
The Riesz transform localised to low energies, $\nabla F_<(\sqrt{\Delta})$, is of weak type $(1,1)$ and bounded on $L^p$ for $1 < p \leq 2$.
\end{proposition}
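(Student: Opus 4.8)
The plan is to write $\nabla F_<(\sqrt{\Delta})$ using the resolvent construction of the previous section, splitting the operator according to the decomposition $R(k) = G(k) + G(k)S(k)$, and then to estimate each piece separately. Concretely, for $1 < p \le 2$ we write
\begin{equation*}
\nabla F_<(\sqrt{\Delta}) = \frac{2}{\pi} \int_0^{k_0} \nabla (\Delta + k^2)^{-1} \, dk = \frac{2}{\pi} \int_0^{k_0} \nabla G(k) \, dk + \frac{2}{\pi} \int_0^{k_0} \nabla G(k) S(k) \, dk,
\end{equation*}
and then further decompose $G(k) = G_1(k) + G_2(k) + G_3(k) + G_4(k)$. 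The pieces $G_2$, $G_3$, $G_4$ and the correction term $G(k)S(k)$ all have Schwartz kernels that are compactly supported (or rapidly decaying) in the left variable $z$ after applying $\nabla$, with controlled growth in $z'$ governed by the weights $\mu'(z',k)$ and $\nu(z,k)$ from the preceding proposition; their $k$-integrals over $[0,k_0]$ converge and produce kernels that I will bound by the one-dimensional model kernels of Lemmas~\ref{lem:d1d2} and \ref{lem:d}. The genuinely `large' piece is $\int_0^{k_0} \nabla G_1(k)\, dk$, which contains the local (near-diagonal) singularity of the Riesz transform on each product end.

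For the $G_1$ term I would argue as follows. On each end $E_\pm$ the kernel is $\phi_\pm(z) \nabla_z (\Delta_\pm + k^2)^{-1}(z,z') \phi_\pm(z')$, so $\int_0^{k_0} \nabla G_1(k)\, dk$ differs from $\phi_\pm \nabla \Delta_\pm^{-1/2} \phi_\pm$ only by the spectral multiplier $\nabla F_>(\sqrt{\Delta_\pm}) = \nabla \Delta_\pm^{-1/2}\big(\tfrac{\pi}{2} - \tan^{-1}(\sqrt{\Delta_\pm}/k_0)\big)\Delta_\pm^{-1/2}\cdots$ — more precisely, using $F_<(\xi) = \tfrac{2}{\pi\xi}(\tfrac{\pi}{2} - \tan^{-1}(\xi/k_0))$, the operator $\nabla F_<(\sqrt{\Delta_\pm})$ is the composition of the Riesz transform $\nabla \Delta_\pm^{-1/2}$ on the product manifold $\R^{n_\pm}\times\M_\pm$ (which is bounded on $L^p$ for $1 < p < \infty$, since that manifold is doubling with Gaussian heat kernel bounds and satisfies the standard hypotheses) with the bounded spectral multiplier $F(\sqrt{\Delta_\pm}/k_0)$ furnished by Lemma~\ref{lemmaspecmult}. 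The cutoffs $\phi_\pm$ are bounded multiplication operators, and the commutator $[\nabla F_<(\sqrt{\Delta_\pm}), \phi_\pm]$ has a kernel supported where $z$ lies in $\supp\nabla\phi_\pm \subset K$, which is again handled by the off-diagonal estimates and the one-dimensional lemmas. This shows the $G_1$ contribution is bounded on $L^p$ for all $1 < p < \infty$ (in particular for $1 < p \le 2$), and is of weak type $(1,1)$ by the Calder\'on--Zygmund theory on the doubling spaces $\R^{n_\pm}\times\M_\pm$.

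For the remaining terms $G_2, G_3, G_4$ and $G(k)S(k)$, I would use the pointwise kernel bounds $|\nabla G(k)S(k)(z,z')| \le C\nu(z,k)\mu'(z',k)$ together with the analogous bounds for $\nabla G_2, \nabla G_3, \nabla G_4$, integrate in $k$ over $[0,k_0]$ — the worst case being the $E_-\times E_-$ block where $\nu(z,k)\sim r^{-1}e^{-ckr}$ and $\mu'(z',k)\sim e^{-ckr'}$, and $\int_0^{k_0} e^{-ck(r+r')}\,dk \lesssim (r+r')^{-1}$ — and thereby reduce the resulting radial kernel on each end to the model kernels $K(x,y)$ of Lemmas~\ref{lem:d1d2} and \ref{lem:d} with the appropriate $d_1, d_2$ (taking $d_1 = d_2 = n_\pm$, or $d_1 = N$, $d_2 = n_\pm$ etc.\ according to which factor of $z$ or $z'$ is being weighted). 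I would then verify that the exponents arising satisfy the hypotheses of those lemmas precisely in the range $1 < p \le 2$; the key point is that the logarithmic weights $1 + |\log k| + \log r$ on $E_-$ are absorbed either into a slightly smaller exponential rate or, when $kr \le 1$, into one factor of $\ilg k$, exactly as noted after \eqref{Ek-est2}. The weak-type $(1,1)$ statement for these terms follows because, after $k$-integration, the kernels are integrable (bounded by integrable radial functions), hence define operators bounded from $L^1$ to $L^{1,\infty}$ on each end.

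\textbf{Main obstacle.} The delicate point is the sharp bookkeeping of the logarithmic factors on the $E_-$ end when integrating in $k$: one must check that the $\log r$ and $|\log k|$ factors in $\mu$ and $\mu'$ genuinely do not push any exponent across the threshold of Lemmas~\ref{lem:d1d2}--\ref{lem:d} for $p \le 2$, and in particular that the borderline case in Lemma~\ref{lem:d} (with $a + b = d$) is entered with the strict inequality $a' > a$ rather than equality. Relatedly, one has to confirm that the contribution of the slowly-decaying $\ilg k$ expansion of the resolvent (Proposition~\ref{prop:ilgexp}) does not obstruct $L^p$-boundedness for $p \le 2$ — it does not, because the divergence is only logarithmic and is integrated against the rapidly decreasing left variable — whereas, as the unboundedness statement for $p > 2$ (proved elsewhere in the paper) will show, this same $\ilg k$ term is exactly what destroys boundedness for $p > 2$.
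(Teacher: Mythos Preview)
Your overall strategy matches the paper's: the same decomposition $G_1+G_2+G_3+G_4+GS$, the same reduction of the main $G_1$ piece to the Riesz transform on the doubling product manifolds $\R^{n_\pm}\times\M_\pm$ via Lemma~\ref{lemmaspecmult}, and the same treatment of $G_3$ and $GS$ by integrating the pointwise bounds $\nu(z,k)\mu'(z',k)$ in $k$ and invoking Lemmas~\ref{lem:d1d2} and \ref{lem:d}.

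Two assertions in your plan are, however, incorrect and would lead you astray. First, the $G_1$ contribution is \emph{not} bounded on $L^p$ for all $1<p<\infty$. The commutator piece on $E_-$ (where $\nabla$ lands on $\phi_-$) integrates in $k$ to a kernel of order $\chi(z)\,{r'}^{-1}$, and ${r'}^{-1}\notin L^{p'}(E_-)$ for any $p\ge 2$. The paper handles this piece by a near-/off-diagonal split and Riesz--Thorin interpolation, obtaining boundedness only for $p<2$; indeed this piece, together with part of $G_3$, is precisely what produces the unboundedness for $p>2$ proved in the next section. Second, $\nabla G_3$ and $\nabla G(k)S(k)$ are \emph{not} ``compactly supported (or rapidly decaying)'' in the left variable: by \eqref{graduestimates} one has $|\nabla u_-(z,k)|\sim (\ilg k)\,r^{-1}$ on $E_-$, and $\nu(z,k)=r^{-1}e^{-ckr}$ there. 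Your later paragraph on the $E_-\times E_-$ block, using $\int_0^{k_0}e^{-ck(r+r')}\,dk\lesssim (r+r')^{-1}$, is correct and is exactly what the paper does --- but observe that this lands you in the \emph{critical} case of Lemma~\ref{lem:d} with $d=2$, $a=1$, $a'=2$, which gives boundedness only for $1<p<2$, not for any larger range. Also note that for the $-$ term of $G_3$ with $z\in E_+$ and $z'\in E_-$ the paper uses the sharpened estimate \eqref{graduestimates2} (an extra $\ilg k$ factor on $E_+$, available because $\phi_-$ vanishes there) to cancel the $|\log(kr')|$ from \eqref{resolvent-2}; you do not mention this refinement, and without it the cross-block estimate would carry an uncontrolled logarithm.
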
 

\begin{proof}
We decompose the resolvent into several parts, $(\Delta+k^2)^{-1} = G_1(k) + G_2(k) + G_3(k) + G_4(k)  + G(k)S(k)$, and investigate the $L^p$-boundedness of each part. \\
\underline{$G_1$ term:} It has already been shown in Part I that the term supported on the $E_+$ end is bounded on $L^p$ for $1 < p < n_+$, and is of weak type $(1,1)$. So we consider the term supported on the $E_-$ end. This takes the form 
\begin{equation*}
\int_0^{k_0} \nabla \left((\Delta_{-} + k^2)^{-1}(z,z')\phi_-(z) \phi_-(z') \right) \dk
\end{equation*}
\begin{equation} \label{gradienthitsintegrals}
= \left[\int_0^{k_0} (\Delta_{-} + k^2)^{-1}(z,z') \nabla \phi_-(z) \phi_-(z')  \dk  + \int_0^{k_0} \nabla (\Delta_{-} + k^2)^{-1}(z,z')\phi_-(z) \phi_-(z')  \dk  \right].
\end{equation}
We treat the left integral first, where the gradient hits the $\phi(z)$ factor. Let $\mathcal{D}_r = \{(z,z') \in \M \times \M: \, d(z,z') \leq r \}$ and let $\chi_{\mathcal{D}_r}$ be the characteristic function for $\mathcal{D}_r$. We decompose the integral into near diagonal and away from diagonal parts.
For the near diagonal part, applying Schur test and using \eqref{resolvent-2} gives
\begin{equation*}
\left\| \int_0^{k_0} \chi_{\mathcal{D}_r}(\Delta_{-} + k^2)^{-1} \dk \right\|_{p \rightarrow p} \leq C_r.
\end{equation*}
for all $p \in [1, \infty]$. Now consider the resolvent kernel localized away from the diagonal on the $E_-$ end. For $s \in (1, \infty)$ we estimate the $L^{s'} \to L^\infty$ norm of the kernel: 
\begin{eqnarray*}
& & \left\|   (1-\chi_{\mathcal{D}_r})(\Delta_{-} + k^2)^{-1}(z,z') \right\|_{L^\infty(z); L^s(z')}  \\
&=& \sup_{z \in \M} \left(\int_{\M} |(1-\chi_{\mathcal{D}_r})(\Delta_{-} + k^2)^{-1}(z,z')|^s \dz' \right)^{1/s} \\
&=& \sup_{z \in \M}\left(\int_1^\infty (\log(kr)  e^{-ckr})^s  \; r \dr \right)^{1/s} \\
&=& \frac{1}{k^{2/s}} \sup_{z \in \M}\left(\int_k^\infty (\log(r) )^s  e^{-csr}  \; r \dr \right)^{1/s}\\
&\leq& Ck^{-2/s}.
\end{eqnarray*}
Similarly, we estimate the $L^1 \to L^s$ norm by 
\begin{eqnarray*}
\left\| (1-\chi_{\mathcal{D}_r})(\Delta_{-} + k^2)^{-1}(z,z') \right\|_{L^\infty(z'); L^s(z)} &\leq& Ck^{-2/s}.
\end{eqnarray*}
Applying the Schur test we see that $(1-\chi_{\mathcal{D}_r})(\Delta_{-} + k^2)^{-1}$ is bounded as an operator from $L^{s'} \rightarrow L^\infty$ and $L^1 \rightarrow L^s$ with operator norms bounded by $Ck^{-2/s}$. Now, given $p \in (1,2)$ and $q > p$, let $1 - \frac{1}{s} = \frac{1}{s'} = \frac{1}{p}- \frac{1}{q}$. Applying the Riesz-Thorin interpolation theorem gives
\begin{equation*}
\left\| (1-\chi_{\mathcal{D}_r})(\Delta_{-} + k^2)^{-1} \right\|_{p \rightarrow q} \leq Ck^{2(1/p-1/q)-2}.
\end{equation*}
Now using the compactness of the support of $\nabla \phi_-$, we obtain for $1/p - 1/q > 1/2$
\begin{eqnarray*}
\left\| \nabla \phi_-  \int_0^{k_0} (1-\chi_{\mathcal{D}_r}) (\Delta_-)^{-1} \dk \right\|_{p \rightarrow p} &\leq& C \left\|  \int_0^{k_0} (1-\chi_{\mathcal{D}_r}) (\Delta_{-} + k^2)^{-1} \dk \right\|_{p \rightarrow q} \\
&\leq& C \int_0^{k_0} k^{2(1/p-1/q)-2} \dk \\
&<& \infty.
\end{eqnarray*}
For any $p < 2$, we can choose $q$ large enough so that the condition $1/p - 1/q > 1/2$ holds. Combining the near diagonal and off diagonal norm estimates gives
\begin{equation*}
\left\| \nabla \phi_i  \int_0^{k_0} (\Delta_{-} + k^2)^{-1} \dk \right\|_{p \rightarrow p} < \infty,
\end{equation*}
for any $p < 2$. 

We now treat the right integral of $(\ref{gradienthitsintegrals})$, where the gradient hits the resolvent factor.
Then we can estimate 
\begin{eqnarray*}
\left\| \phi_i \int_0^{k_0} \nabla \left((\Delta_{-} + k^2)^{-1}  \right) \dk \, \phi_i \right\|_{p \rightarrow p} 
&=& \left\| \phi_i \nabla F_{<}(\sqrt{\Delta_{-}}) \, \phi_i \right\|_{p \rightarrow p} \\
&=& \left\| \phi_i \nabla (\Delta_{-})^{-1/2} \left(\frac{\pi}{2} - \tan^{-1}\left(\frac{\sqrt{\Delta_{-}}}{k_0} \right) \right)  \, \phi_i \right\|_{p \rightarrow p} \\
&\leq& C \left\| \nabla (\Delta_{-})^{-1/2} \right\|_{p \rightarrow p}.
\end{eqnarray*}
Note that we used Lemma \ref{lemmaspecmult} in the second last line. The norm in the last line is finite from standard results on Riesz transforms. We also obtain that weak type $(1,1)$ boundedness since $\pi/2-\tan^{-1}\left(\frac{\sqrt{\Delta_{-}}}{k_0}\right)$ is $L^1$ bounded.

\underline{$G_2$ term:} Note that $\nabla G_2$ is a pseudodifferential operator of order -1 which has a compactly supported Schwartz kernel depending smoothly on $k$. The integral must also be a pseudodifferential operator of order -1 which has a compactly supported Schwartz kernel. This means that this term is $L^p$-bounded for all $p \in [1,\infty]$.

\underline{$G_3$ term:}
We investigate the behaviours of
\begin{equation}
\sum_\pm \int_0^{k_0}  (\Delta_{\pm} + k^2)^{-1}(z_i^\circ,z') \phi_\pm(z') \nabla_z u_\pm(z,k) \dk
\label{G3-term}\end{equation}
where $z,z'$ lie on each of the ends. First consider the $-$ term, when both $z,z'$ are on the $E_-$ end. Then we get an estimate using \eqref{graduestimates} and \eqref{resolvent-2}
\begin{eqnarray*}
\int_0^{k_0} \big( r^{-2} + (\ilg k) r^{-1} \big) e^{-ckr} (1 + |\log (kr') |) e^{-ckr'} \, dk 
\end{eqnarray*}
For the $r^{-2}$ term, we can estimate the quantity above by 
$$
r^{-2} \int_0^{k_0}  (1 + |\log (kr') |) e^{-ckr'} \, dk \leq C r^{-2} {r'}^{-1}. 
$$
For the $(\ilg k) r^{-1}$ term, we note that $(\ilg k) |\log kr'| \leq 1$ since $k$ is small and $r' \geq R$ is large here. Therefore we can estimate this quantity by 
$$
r^{-1} \int_0^{k_0}  e^{-ckr} e^{-ckr'} \, dk = r^{-1} (r + r')^{-1} \sim \min(r^{-2}, r^{-1} {r'}^{-1}). 
$$
Now using Lemma~\ref{lem:d} with $d=2$, we see that this is bounded on $L^p$ for $1 < p < 2$. 

Next consider the $-$ term when $z$ is in $E_+ \cup K$ and $z'$ in $E_-$. We estimate the kernel, using \eqref{graduestimates2} instead of \eqref{graduestimates} by 
$$
 \ilg k \, \int_0^{k_0}r^{1-n_+} e^{-ckr} (1 + |\log (kr') |) e^{-ckr'} \, dk .
$$
This is estimated like the first case above, and we arrive at an estimate of 
$$
\min(r^{-n_+}, r^{1-n_+} {r'}^{-1}). 
$$
We apply Lemma~\ref{lem:d1d2} with $d_1 = n_+$, $d_2 = 2$ and $a = n_+-1$, $a' = n_+$, $a+b = a'+b' = n_+$. We see that this part of the operator is bounded on $L^p$ for $1 < p < 2$. 

Now we consider the $+$ term, when $z$ is in $E_- \cup K$ and $z'$ is in $E_+$. In this case we can estimate the kernel by 
\begin{eqnarray*}
\int_0^{k_0} \big( r^{-2} + (\ilg k) r^{-1} \big) e^{-ckr} {r'}^{2-n_+} e^{-ckr'} \, dk 
\end{eqnarray*}
Replacing $r^{-2} + (\ilg k) r^{-1}$ by $r^{-1}$, we estimate this quantity by 
$$
r^{-1} {r'}^{2-n_+} (r+r')^{-1} \sim \min ( r^{-2} r^{2-n_+}, r^{-1} {r'}^{1-n_+}). 
$$
Applying Lemma~\ref{lem:d1d2} with $d_1 = 2$, $d_2 = n_+$ and $a = 1$, $a' = 2$, $a + b = a' + b' = n_+$, we see that this part of the operator is bounded on $L^p$ for $1 < p < n_+$. 

Lastly, for the $+$ term when $z$ and $z'$ are both in $E_+$, we estimate \eqref{G3-term} by 
$$
\int_0^{k_0} r^{1-n_+} e^{-ckr} {r'}^{2-n_+} e^{-ckr'}  \, dk = C r^{1-n_+}{r'}^{2-n_+}(r+r')^{-1} \sim \min(r^{-n_+} {r'}^{2-n_+}, r^{1-n_+} {r'}^{1-n_+}). 
$$
We obtain boundedness from Lemma~\ref{lem:d1d2} with $d_1 = d_2 = n_+$, similarly to above.

\underline{$G_4$ term:} This term contributes the following to the low energy Riesz transform
\begin{equation*}
k_0 \sum_{i=1}^N \nabla \psi_i \langle \varphi_i, \cdot \rangle
\end{equation*}
where $\psi_i, \varphi_j \in C_c^\infty(\M)$. This operator is $L^p$-bounded for $p \in [1,\infty]$. 
\end{proof}

\underline{$GS$ term:} 
We have seen that the kernel $\big(\nabla G(k) S(k) \big)(z,z')$ is bounded pointwise by $C \nu(z,k) \mu'(z', k)$. This can be integrated in $k$ and estimated just as for the $G_3$ term above (in fact simpler as there are no $\log k$ factors that need to be cancelled with $\ilg k$ factors).

\subsection{$L^p$ boundedness of the high-energy Riesz transform}
This works just as in Part I. We have

\begin{proposition}[Proposition 5.1 of \cite{HS2}]\label{prop:he-Riesz}
The Riesz transform localized to high energies, $\nabla F_>(\sqrt{\Delta})$, is bounded on $L^p(\M)$ for $p$ in the range $(1, \infty)$. 
In addition, it is of weak-type $(1,1)$, that is, it is a bounded map from $L^1(\M)$ to $L^1_w(\M)$. 
\end{proposition}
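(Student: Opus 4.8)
This is \cite[Proposition 5.1]{HS2}, and the plan is to verify that the proof given there carries over with only notational changes, since the argument is insensitive to the Euclidean dimensions of the ends. The crucial structural point is that $\nabla F_>(\sqrt{\Delta})$ is a \emph{local} operator: as $F_>$ is built by integrating the resolvent over $k \geq k_0 > 0$, its Schwartz kernel is concentrated near the diagonal at unit scale, with super-exponential off-diagonal decay, so the operator sees only the bounded geometry of $\M$ --- which is the same whether $n_-=2$ or $n_-\geq 3$ --- and never the non-doubling structure of $\M$ at large scales. I would begin with the $L^2$ bound: writing $\nabla F_>(\sqrt{\Delta}) = (\nabla \Delta^{-1/2})\circ m(\Delta)$ with $m(\lambda) = \sqrt{\lambda}\,F_>(\sqrt{\lambda}) = \tfrac2\pi\arctan(\sqrt{\lambda}/k_0)$, the factor $\nabla\Delta^{-1/2}$ is an $L^2$-isometry, since $\|\nabla\Delta^{-1/2}f\|_2^2 = \langle \Delta^{1/2}f,\Delta^{-1/2}f\rangle = \|f\|_2^2$, using that $0$ is not an $L^2$-eigenvalue of $\Delta$ on $\M$ (which follows as in Lemma~\ref{lem:uniqueness}), and $m(\Delta)$ is bounded on $L^2(\M)$ by the spectral theorem because $m$ is bounded.

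For the rest I would pass to the Schwartz kernel via subordination,
\begin{equation*}
\nabla F_>(\sqrt{\Delta}) = \frac{2}{\pi}\int_0^\infty g(t)\,\nabla e^{-t\Delta}\,dt, \qquad g(t) = \int_{k_0}^\infty e^{-tk^2}\,dk,
\end{equation*}
where $g(t) \leq C\min(t^{-1/2}, t^{-1}e^{-ct})$, and split the $t$-integral at $t=1$. On the large-time piece I would factor $\nabla e^{-t\Delta} = (\nabla e^{-\Delta/2})\circ e^{-(t-1/2)\Delta}$: the Markovian semigroup $e^{-(t-1/2)\Delta}$ is a contraction on every $L^p(\M)$, so $\int_1^\infty g(t)\, e^{-(t-1/2)\Delta}\,dt$ is bounded on all $L^p$ with norm at most $\int_1^\infty g(t)\,dt < \infty$; and $\nabla e^{-\Delta/2}$ has kernel bounded by $C e^{-d(z,z')^2/C}$, the small-time gradient heat-kernel Gaussian bound at $t=1/2$, so by the at-most-polynomial volume growth of $\M$ and Schur's test it too is bounded on every $L^p(\M)$, $1\leq p \leq\infty$. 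Hence the large-time piece is bounded on all $L^p(\M)$.

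The small-time piece $\int_0^1 g(t)\,\nabla e^{-t\Delta}\,dt$ is where the genuine work lies. Inserting the small-time Gaussian gradient bound $|\nabla_z e^{-t\Delta}(z,z')| \leq C t^{-(N+1)/2}e^{-d(z,z')^2/(Ct)}$ for $0<t\leq1$ (valid by bounded geometry), together with the analogous bound for one further derivative in $z$ or in $z'$, and carrying out the $t$-integral against $g(t)\lesssim t^{-1/2}$, shows that the kernel $\mathcal{K}(z,z')$ of this piece satisfies $|\mathcal{K}(z,z')| \leq C d(z,z')^{-N}$ and $|\nabla_z\mathcal{K}(z,z')| + |\nabla_{z'}\mathcal{K}(z,z')| \leq C d(z,z')^{-N-1}$ for $d(z,z')\leq 1$, and is exponentially small for $d(z,z')\geq 1$. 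Since $\M$ is volume-doubling at scales $\lesssim 1$, this gives the H\"ormander integral condition in both variables.

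Combining the $L^2$ bound with these Calder\'on--Zygmund kernel estimates and running the (local) Calder\'on--Zygmund machinery, exactly as in \cite{CCH}, yields weak type $(1,1)$ and $L^p(\M)$ boundedness for $1 < p \leq 2$; the range $2 < p < \infty$ then follows by duality from the same estimates applied to the adjoint $-F_>(\sqrt{\Delta})\operatorname{div}$, which has an identical structure. I expect no real obstacle here: the one point requiring care in the write-up is checking that the non-doubling behaviour of $\M$ at large distances is genuinely harmless, which it is because of the exponential off-diagonal decay of $\mathcal{K}$ and of the large-time kernel, and this is precisely what makes the high-energy estimate independent of the dimensions $n_\pm$.
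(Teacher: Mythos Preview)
Your approach is plausible and the overall strategy should work, but it takes a genuinely different route from the paper's, and there are two points worth flagging.

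\textbf{Comparison with the paper.} The paper does not use heat-kernel subordination at all. Instead it splits $F_>(\lambda) = G'_r(\lambda) + G''_r(\lambda)$ according to whether the \emph{Fourier transform} of $F_>$ is supported in $[-r,r]$ or outside $[-r/2,r/2]$. By finite propagation speed, $G'_r(\sqrt{\Delta})$ has Schwartz kernel supported in an $r$-neighbourhood of the diagonal; since $G'_r$ is a symbol of order $-1$, one can invoke results on symbolic functions of elliptic operators on \emph{compact} manifolds to conclude that $\nabla G'_r(\sqrt{\Delta})$ is a pseudodifferential operator of order $0$, hence $L^p$-bounded for $1<p<\infty$ and weak $(1,1)$. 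For the remainder $\nabla G''_r(\sqrt{\Delta})$, the paper passes to $L^2$ via $\sqrt{\Delta}\,G''_r(\sqrt{\Delta})$ (which lies in the functional calculus), and then uses the Cheeger--Gromov--Taylor Sobolev inequality on manifolds of bounded geometry to upgrade $L^2$ estimates back to pointwise exponential decay of the kernel, after which Schur's test gives boundedness on all $L^p$, $1\le p\le\infty$. This route entirely avoids Calder\'on--Zygmund theory on $\M$ and, consistently with the paper's stated philosophy, never uses heat-kernel bounds on $\M$ itself.

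\textbf{A point to tighten in your argument.} Your appeal to ``the (local) Calder\'on--Zygmund machinery, exactly as in \cite{CCH}'' is not quite on target: in \cite{CCH} the manifolds are doubling, so the standard CZ theory applies directly. Here $\M$ is genuinely non-doubling, and the usual CZ decomposition needs global doubling to control the measures of the dilated balls. Your instinct that the exponential off-diagonal decay of $\mathcal{K}$ rescues this is correct, but the write-up must make this explicit: split $\mathcal{K} = \mathcal{K}\,1_{\{d\le 2\}} + \mathcal{K}\,1_{\{d>2\}}$, dispatch the far piece by Schur, and for the near piece run a CZ decomposition with a stopping rule at unit scale so that only local doubling is used. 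This is standard on manifolds of bounded geometry, but it is not ``exactly as in \cite{CCH}''. Also note that your approach uses short-time gradient heat-kernel bounds \emph{on $\M$}, which are certainly available from bounded geometry, but which the paper deliberately avoids; this is a methodological rather than mathematical difference.
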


The proof of this Proposition from \cite{HS2} works perfectly well in the present setting; there are no changes required to accommodate the two-dimensional Euclidean factor on the $E_-$ end. For the reader's convenience, we recall some ideas in the proof. 

The strategy is to decompose $F_>(\lambda)$ into two functions $F_>(\lambda) = G'_r(\lambda) + G''_r(\lambda)$, where $G'_r(\lambda)$ has Fourier transform supported in $[-r, r]$ and  $G''_r$ has Fourier transform supported outside the set $[-r/2, r/2]$. The support condition on the Fourier transform of $G'_r$ shows that $G'_r(\sqrt{\Delta})$ is supported in an $r$-neighbourhood of the diagonal. One also has that $G'_r(\lambda)$ is a symbol of order $-1$. From these two facts, one can show that $G'_r(\sqrt{\Delta})$ is a pseudodifferential operator of order $-1$, using existing results about symbolic functions of elliptic operators on \emph{compact} manifolds. We then find that $\nabla G'_r(\sqrt{\Delta})$ is a pseudodifferential operator of order zero, hence bounded on $L^p$ for $1 < p < \infty$ and of weak-type $(1,1)$. 

On the other hand, for $\nabla G''_r(\sqrt{\Delta})$, one shows that the kernel of this operator decays exponentially as the distance between the points tends to infinity. Given this, Schur's test shows that this operator is $L^p$ bounded for $1 \leq p \leq \infty$. This is achieved by passing from Schur-type estimates on the kernel to $L^2$ estimates. On $L^2$ one can relate $\nabla G''_r(\sqrt{\Delta})$ to $\sqrt{\Delta} G''_r(\sqrt{\Delta})$, and the latter operator is in the functional calculus of $\Delta$, so can be related directly to properties of the function $G''_r$. One also needs a fundamental Sobolev inequality of Cheeger-Gromov-Taylor \cite{CGT}, valid on all manifolds with $C^\infty$ bounded geometry, to pass from $L^2$ estimates back to supremum estimates. For the full details, see \cite[Section 5]{HS2}.


\section{Unboundedness of the Riesz transform for $p > 2$}

\begin{proposition}
The Riesz transform on $\M$ is not $L^p$-bounded for $p > 2$.
\end{proposition}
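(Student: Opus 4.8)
The plan is to argue by duality. Since $1<p<\infty$ makes $L^p$ reflexive, $T=\nabla\Delta^{-1/2}$ is bounded on $L^p(\M)$ if and only if its adjoint $T^{*}=-\Delta^{-1/2}\operatorname{div}$ is bounded on $L^{p'}(\M)$, $p'=p/(p-1)$. So, fixing $p>2$ (hence $1<p'<2$), I would assume $T$ is bounded on $L^p$ and derive a contradiction by producing a smooth, compactly supported vector field $\vec\omega$ with $T^{*}\vec\omega\notin L^{p'}(\M)$. The natural choice is $\vec\omega=\nabla\phi_-$, where $\phi_-\in C^\infty(\M)$ equals $1$ near infinity on $E_-$ and is supported away from infinity on $E_+$ (the cutoff used in the parametrix construction), and $v_-:=-\Delta\phi_-\in C_c^\infty(\M)$. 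Then $\operatorname{div}\vec\omega=\operatorname{div}\nabla\phi_-=-\Delta\phi_-=v_-$, so $T^{*}\vec\omega=-\Delta^{-1/2}v_-$ (a well-defined element of $L^2(\M)$, since the $E_+$ end makes the spectral measure of $\Delta$ vanish to positive order at $0$). Thus everything reduces to showing $\Delta^{-1/2}v_-\notin L^{p'}(\M)$ for $1<p'<2$, which I would establish via a pointwise lower bound on the end $E_-$.

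To get that lower bound, write $\Delta^{-1/2}v_-=\frac{2}{\pi}\int_0^\infty(\Delta+k^2)^{-1}v_-\dk$ and split at $k=k_0$. The high-energy part $\int_{k_0}^\infty$ contributes a function decaying exponentially on $E_-$ by \eqref{resolvent-2}. For the low-energy part I would invoke the key lemma, Lemma~\ref{u2lemma}, with source $v_-$: here $\phi=\phi_-$ and $\beta=1$, and since $\phi_--\beta$ vanishes identically near infinity on $E_-$, the correction terms $\V_\pm$ in \eqref{u1def} vanish there, so the approximate solution is simply $u_-(z,k)=-\ilg k\,K_0(kr)+O\big((\ilg k)^2r^{-1}\big)$ for $z\in E_-$ with $r$ large. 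The difference $(\Delta+k^2)^{-1}v_--u_-$ is handled as in the proof of Proposition~\ref{prop:ilgexp}: applying the bound $(\Delta+k^2)^{-1}=O(|\log k|)$ to the error $(\Delta+k^2)u_--v_-=O((\ilg k)^q r^{-\infty})$ of \eqref{approx-soln} (with $q\ge 2$) gives, after integrating in $k$, a contribution $O(r^{-2})+O(r^{-1}(\log r)^{-2})$, which is negligible at the scale we need. It then remains to evaluate $\int_0^{k_0}\ilg k\,K_0(kr)\dk$; the substitution $k=s/r$ turns this into $\frac1r\int_0^{k_0r}\frac{K_0(s)}{\log(r/s)}\ds$, and since $\int_0^\infty K_0=\pi/2$ and $\log(r/s)\sim\log r$ uniformly for $s$ ranging over any interval growing slower than $r$ (while $K_0$ decays exponentially beyond $s\sim1$), this integral equals $(\pi/2+o(1))/(r\log r)$ as $r\to\infty$. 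Hence $|\Delta^{-1/2}v_-(z)|\ge c/(r\log r)$ for $z\in E_-$, $r\ge r_0$.

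Finally, since $E_-$ is isometric to (an exterior region of) $\R^2\times\M_-$, integrating out the angular and $\M_-$ variables leaves volume element comparable to $r\dr$, so
\[
\int_{\M}|\Delta^{-1/2}v_-|^{p'}\dg\;\gtrsim\;\int_{r_0}^\infty\frac{dr}{r^{p'-1}(\log r)^{p'}}\;=\;\infty\qquad\text{for }p'<2,
\]
because $p'-1<1$. This contradicts the assumed $L^{p'}$-boundedness of $T^{*}$, and therefore $T$ is unbounded on $L^p(\M)$ for every $p>2$.

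I expect the main obstacle to be exactly this lower bound on $E_-$: controlling the key-lemma error and the higher-order terms so they cannot cancel or dominate the main term of size $1/(r\log r)$. The delicate point is that for $kr\lesssim1$ one has $u_-(z,k)=-\ilg k\,K_0(kr)\approx -1+\ilg k\log r+O(\ilg k)$, so its $k$-integral over $(0,k_0)$ receives $O(1/r)$ contributions from the constant part and from the $\ilg k\log r$ part which must cancel, leaving the residual $1/(r\log r)$; the substitution $k=s/r$ together with the decay of $K_0$ past its transition at $kr\sim1$ is what makes this cancellation transparent and pins down the order $1/(r\log r)$ — precisely the borderline rate that lies in $L^2(E_-)$ but in no $L^{p'}(E_-)$ with $p'<2$, mirroring the sharp endpoint $p=2$ in Theorem~\ref{thm:main}.
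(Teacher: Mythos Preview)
Your approach is genuinely different from the paper's and the overall strategy is sound. The paper works directly with the \emph{kernel} of the low-energy Riesz transform: it isolates the terms coming from $G_1$ and $G_3$, uses the lower bound \eqref{lowerbound} on $\partial_r(u_-+\phi_-)$ together with positivity of the resolvent \eqref{res-lowerbound-2}, and extracts a rank-one piece $\frac{\tau(z)}{r}\cdot\frac{1}{r'\log r'}$ that fails to be bounded for $p>2$. Your duality argument instead tests $T^*$ against the single vector field $\nabla\phi_-$ and reduces everything to a pointwise lower bound for $\Delta^{-1/2}v_-$ on $E_-$. The computation $\int_0^{k_0}\ilg k\,K_0(kr)\,dk\sim\frac{\pi/2}{r\log r}$ is correct, and the final divergence $\int r^{1-p'}(\log r)^{-p'}\,dr=\infty$ for $p'<2$ is exactly the right endpoint behaviour.

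There is, however, a genuine gap in your error control. First, the condition $q\ge 2$ is not enough. The error $(\Delta+k^2)u_- - v_-$ is $O((\ilg k)^q r^{-\infty})$, and for $z\in E_-$ the resolvent applied to a rapidly decaying function is bounded by the weight $\mu(z,k)=e^{-ckr}(1+|\log k|+\log r)$ from Section~3.4 (Proposition~\ref{prop:ilgexp} only covers compact sets and cannot be invoked here). This gives $(\Delta+k^2)^{-1}[\text{error}](z)=O\big((\ilg k)^{q-1}(1+(\ilg k)\log r)e^{-ckr}\big)$, and integrating in $k$ yields $O\big(r^{-1}(\log r)^{1-q}\big)$. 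For $q=2$ this is $O(1/(r\log r))$, the \emph{same} order as your main term; you need $q\ge 3$. Second, your description $u_-(z,k)=-\ilg k\,K_0(kr)+O\big((\ilg k)^2 r^{-1}\big)$ on $E_-$ is not accurate: the correction $(\ilg k)u_2$ contains a term $-\beta_1(\ilg k)^2K_0(kr)$ which is not $O((\ilg k)^2 r^{-1})$. One must use the refined structure of each $u_j$ on $E_-$ (namely $u_j=-\beta_{j-1}\ilg k\,K_0(kr)+O(r^{-1}e^{-ckr})$ there) to see that $(\ilg k)^{j-1}u_j$ integrates to $O\big(r^{-1}(\log r)^{-j}\big)$, not merely rely on the crude bound $|u_j|\le Ce^{-ckr}$ from \eqref{uestimates}, which would again produce a competing $1/(r\log r)$ contribution. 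With these two fixes (take $q\ge 3$ and track the structure of the $u_j$ on $E_-$), your argument goes through.
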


\begin{proof}
In the proofs of boundedness of the Riesz transform, we saw that the high energy Riesz transform is bounded for all $p \in (1, \infty)$,
and that many of the components of the low energy Riesz transform is all bounded at least for $p \in (1, n_+)$. It suffices to only consider the sum of those parts of the Riesz transform that were not shown to be bounded for some $p > 2$, and to show that sum is unbounded. 
Therefore we only consider the ``$\nabla \phi$'' part of the $-$ term of the $G_1$ component, and the $-$ term of the $G_3$ component. That is, it suffices to consider 
\begin{align}
\begin{split}
\int_0^{k_0} \nabla \phi_-(z)(\Delta_{-} + k^2)^{-1}(z,z') \phi_-(z')  + \nabla u_-(z,k) (\Delta_{-} + k^2)^{-1}(z_-^\circ,z') \phi_-(z') \dk
\end{split}
\end{align}
does not act boundedly on $L^p(\M)$ for $p > 2$. 

First observe that the $z$ variable in the resolvent factor can be replaced by $z_-^\circ$. Indeed, using the compact support of $\nabla \phi_-$, we apply the gradient estimate \eqref{gradresolvent-2} of the resolvent kernel to estimate
\begin{align}
\begin{split}
\int_0^{k_0} \nabla \phi_-(z)\left|(\Delta_{-} + k^2)^{-1}(z,z')  - (\Delta_{-} + k^2)^{-1}(z_-^\circ,z') \right| \phi_-(z') \dk \\
\leq |\nabla \phi_-(z)|  \int_0^{k_0}  \left( {r'}^{-1}  \exp \left(-kr' \right)\right) \dk \nonumber . 
\end{split}
\end{align}
This integral can be estimated by $O({r'}^{-2} )$, and is compactly supported in the left variable, and so is bounded for all $L^p$ for $p \in (1,\infty)$. So now we can combine the two terms. It suffices to  show that
\begin{equation*}
\int_0^{k_0} \left(\nabla \phi_-(z) + \nabla u_-(z,k) \right) (\Delta_{-} + k^2)^{-1}(z_-^\circ,z') \phi_-(z') \dk
\end{equation*}
is not $L^p$-bounded for $p > 2$. It suffices to consider only the $r$-component of the gradient.
Choose $\tau(z)$ to be smooth, nonnegative, compactly supported, not identically zero, and supported in the region $\{ r \geq r_0 \}$ on the end $E_-$, where $r_0$ is as in \eqref{res-lowerbound-2}.  It suffices to show 
\begin{equation*}
\tau(z) \int_0^{k_0} \partial_r \Big(\phi_-(z) + u_-(z,k) \Big) (\Delta_{-} + k^2)^{-1}(z^\circ,z') \phi_-(z') \dk
\end{equation*}
is not $L^p$-bounded for $p > 2$. 

Recall from Lemma \ref{u2lemma} that $\partial_r \left(u_-(z,k) + \phi_-(z)\right) \geq C \frac{\textup{ ilg} k}{r} 1_{kr \leq \epsilon} + O(kr^{-1})$,
for $r \geq r_0$. Then the $O(kr^{-1})$ term contributes a kernel that is bounded on $L^p$ for every $p > 2$. In fact, by \eqref{resolvent-2}, this term contributes a kernel that is dominated in magnitude by 
$$
 \int_0^{k_0} k r^{-1}  C \Big( 1 + | \log(k d(z,z')) | \exp(-ckd(z,z') ) \Big) \, dk \leq C r^{-1} {r'}^{-2}.
 $$
This kernel is $L^p$ in the left variable and $L^{p'}$ in the right variable for every $p > 2$, and hence is bounded on $L^p$ for every $p > 2$, as claimed. 

So it remains to treat the part of $\partial_r \left(u_-(z,k) + \phi_-(z)\right)$ which is bounded below by the nonnegative quantity $C \ilg k \, r^{-1} 1_{kr \leq \epsilon}$. We can now use the positivity of the resolvent, specifically the lower bound \eqref{res-lowerbound-2}. 
Set
\begin{equation*}
f(t) = \begin{cases}
\frac{\ilg t}{1+\ilg t} \qquad &\text{if } t < 1, \\
1 \qquad &\text{if } t \geq 1.
\end{cases}
\end{equation*}

Observe that for $kr' < 1$,
\begin{eqnarray*}
\ilg k &=& \frac{\ilg kr' \, \ilg \frac{1}{r'}}{\ilg \frac{1}{r'} + \ilg k r'} \\
&=& \ilg \frac{1}{r'} \left(\frac{\ilg kr'}{\ilg \frac{1}{r'} + \ilg kr'} \right) \\
&\geq& \ilg \frac{1}{r'} \left(\frac{\ilg kr'}{1 + \ilg kr'} \right) \\
&=& f(kr') \; \ilg \frac{1}{r'}.
\end{eqnarray*}
Now applying the lower bound \eqref{res-lowerbound-2} for the resolvent, and noting that $r'$ is equal to $d(z_-^\circ, z') + O(1)$, we obtain 
\begin{align*}
 \frac{\tau(z)}{r} &   \int_0^{k_0}  \ilg k  \, 1_{kr \leq \epsilon} \,  (\Delta_{-} + k^2)^{-1}(z_2^\circ,z')  \dk \\ 
 &\geq \frac{\tau(z)}{r} \ilg \frac{1}{r'} \int_0^{\epsilon r^{-1}} f(kr') c \Big( 1 + | \log(k r') | \exp(-Ckr' ) \Big) \dk \\
&\geq c \frac{\tau(z)}{r} \frac{\ilg \frac{1}{r'}}{r'} \int_0^{\epsilon} f(\kappa') \Big( 1 + | \log(\kappa') | \exp(-C\kappa' ) \Big)  \dkappa' \\
&\geq C \frac{\tau(z)}{r} \frac{\ilg \frac{1}{r'}}{r'}.
\end{align*}
The kernel in the last line is a rank one kernel, of the form $a(z) b(z')$, and is bounded on $L^p$ if and only if $a \in L^p$ and $b \in L^{p'}$. Since $\frac{\ilg \frac{1}{r'}}{r'} \in L^{p'}$ only for $p \leq 2$, this kernel fails to be $L^p$-bounded in the range $p>2$. 
Consequently the Riesz transform is also not bounded in this range. 
\end{proof}

\end{document}